\documentclass[11pt]{article}
\usepackage[T1]{fontenc}
\usepackage[utf8]{inputenc}
\usepackage{lmodern,amsmath,amssymb,mathrsfs,amsthm,mathtools,geometry}
\usepackage[hidelinks]{hyperref}
\newtheorem{theorem}{Theorem}[section]
\newtheorem{proposition}[theorem]{Proposition}
\newtheorem{lemma}[theorem]{Lemma}
\newtheorem{corollary}[theorem]{Corollary}
\newtheorem{example}[theorem]{Example}

\newcommand{\norm}[1]{\lVert#1\rVert}
\title{Finite-Branch Multipliers on a H\"older Algebra:\\ Commutants, Norm Closures, and Essential Spectra}
\author{H\"ulya Duru \and Serkan \.{I}lter \and Arkady Kitover}
\date{}
\begin{document}
\maketitle
\begin{abstract}
Noninjective multiplication symbols allow commuting operators to mix
function values along their fibers, while H\"older regularity constrains
this mixing at branch points. We study these operators on the Banach
algebra $A_\beta$, $0<\beta<1$, defined by a restricted interval-variation condition.
For $u(t)=t(1-t)$, we determine the full commutant
$\{M_u\}'=\mathscr C_\beta$, including operators whose reflection
coefficients need not belong to $A_\beta$. Separately, we characterize the operator-norm closure $J_\beta$
of the regular multiplication--reflection subalgebra of $\mathscr C_\beta$,
obtain two-sided norm-distance estimates, and find a complemented copy of
$\ell^\infty/c_0$ in the Banach-space quotient
$\mathscr C_\beta/J_\beta$. Every
$T\in\mathscr C_\beta$ satisfies
$\sigma(T)=\sigma_{\mathrm{ap}}(T)=\sigma_{\mathrm{usf}}(T)
=\sigma_{\mathrm{ess}}(T)$, and Fredholmness is equivalent to
invertibility. For multipliers with finitely many full bi-Lipschitz
branches, we also characterize their full commutants and
establish the same spectral equalities for every commuting operator.
For the regular finite-branch algebra, we establish an exact
norm-closure criterion using weighted local H\"older defects and
separate row-difference conditions, even when several folds share a level.
\end{abstract}

\noindent\textbf{Keywords:} H\"older algebra with restricted variation; commutants of multiplication operators; multiplication--reflection operators; finite-branch multipliers; operator-norm closure; essential spectra; Fredholm operators.\par
\smallskip
\noindent\textbf{2020 Mathematics Subject Classification:} Primary 47B48, 47B38; Secondary 47A10, 47A53, 46J10.

\section*{Introduction}
When a multiplication symbol takes the same value at different
points, a commuting operator may combine function values on the same
fiber. On a H\"older function algebra, this combination must also
respect regularity where inverse branches meet. We investigate three
questions: which bounded operators commute with a noninjective
multiplier, which of them can be approximated in operator norm by
regular-coefficient operators, and how their spectra reflect the
fiber structure. We begin with the symmetric symbol $u(t)=t(1-t)$
and then consider finitely many full branches.

Our space $A_\beta$, $0<\beta<1$, is a closed subalgebra of
$C^\beta([0,1])$. Its restricted-variation condition requires sums
of absolute increments over disjoint intervals to vanish as their
total length tends to zero, under a separate fixed bound on the sum
of the $\beta$-powers of the interval lengths. This condition yields the strict
inclusions $c^\beta\subsetneq A_\beta\subsetneq C^\beta$.
Unlike the little H\"older space, $A_\beta$ contains the cusp $|t-\tfrac12|^\beta$ and the odd function
$h(t)=\operatorname{sgn}(t-\tfrac12)|t-\tfrac12|^\beta$, which is
central to coefficient recovery; unlike the full H\"older space,
it excludes certain small-scale oscillations. Some of our localization arguments also work on $C^\beta$, and we show exactly which operators in the ordinary H\"older commutant preserve $A_\beta$, using only the test functions $1$ and $h$.

Several methods we use have appeared in earlier work. Sherbert
\cite{Sherbert1963,Sherbert1964} studied Lipschitz function algebras.
Khani Robati and Seddighi \cite{KhaniRobatiSeddighi2000} treated
operator-matrix commutants of direct sums and multipliers of
Lipschitz algebras. Results for quadratic and other analytic
multiplication symbols appear in
\cite{KhaniRobati2000Square,KhaniRobatiVaezpour2001,AhmadiHedayatian2013,YarmohammadiKhaniRobati2025};
related Lipschitz operator classes and finite Blaschke-product
commutants are studied in
\cite{GolbaharanMahyar2016,GallardoPartington2022,ChattopadhyayDas2024}.
Distance formulas for oscillation and Lipschitz--H\"older function
spaces appear in \cite{Perfekt2013,Angrisani2020};
\cite{PlichkoWojtowicz2007} concerns Banach-space quotients, while
\cite{ReinwandKasprzak2023,KitoverOrhon2025,BehrouziMahyar2025}
treat spectral and Fredholm questions. The general use of finite-dimensional fiber matrices, cutoff and localization arguments, and quotient-space distance formulas is not new, and we do not claim that the two-budget function-space condition defining $A_\beta$ was first introduced here. Our contribution consists of the exact algebraic and endpoint conditions, the quantitative closure criteria, and the spectral conclusions in the specified restricted-variation setting.

For $u(t)=t(1-t)$, our first main result is
\[
  \{M_u\}'=\mathscr C_\beta.
\]
The algebra $\mathscr C_\beta$ consists of bounded
multiplication--reflection operators with uniquely determined
coefficients. Their reflection coefficients may fail to belong to
$A_\beta$, so the regular-coefficient algebra does not give the
entire commutant. Exact finite-fiber locality, proved before the
commutant theorem, gives the representation for every
$f\in A_\beta$ without relying on polynomial density. For symbols
with finitely many full bi-Lipschitz branches, we characterize
\emph{every} bounded commuting operator by weighted matrix
conditions at the branch endpoints. We also determine how the
symmetric commutant sits inside the ordinary H\"older commutant.

Our second main result characterizes the operator-norm closure
$J_\beta$ of the regular multiplication--reflection algebra by a
local H\"older defect, with explicit two-sided distance estimates.
The regular algebra, $J_\beta$, and $\mathscr C_\beta$ are distinct;
moreover, the Banach-space quotient $\mathscr C_\beta/J_\beta$
contains a complemented copy of $\ell^\infty/c_0$. We obtain
corresponding complete closure and distance criteria for regular
finite-branch operators. We also identify the quotient of the full finite-branch commutant by this closure, up to isomorphism, with a closed subspace of a sum of two local germ quotients. At simultaneous folds, compatibility with
the common endpoint subspace alone is insufficient: the row
difference associated with each fold must lie in its own difference
direction. A four-branch example exhibits this obstruction.

Our third main result concerns the entire commutant rather than
only the norm closure of regular operators. If $T$ commutes with
the symmetric multiplier, or with a multiplier having finitely
many full bi-Lipschitz branches, then
\[
 \sigma_{\mathrm{ap}}(T)=\sigma_{\mathrm{usf}}(T)
 =\sigma_{\mathrm{ess}}(T)=\sigma(T).
\]
For these classes, $T$ is Fredholm if and only if it is invertible.
The spectrum is recovered from finite-dimensional fiber matrices,
with the required closure of their interior spectral values.
The proofs combine localized approximate eigenvectors with
weighted inverse constructions that preserve the endpoint
conditions. We do not extend these spectral assertions to arbitrary
finite-to-one symbols without the stated full-branch hypotheses.

Section~1 establishes the function algebra; Section~2 proves
finite-fiber locality and the multiplier criterion. Sections~3--4
give the full symmetric commutant, its norm closure, and its spectral
and Fredholm properties. Sections~5--6 treat regular finite-branch
operators and fold defects. Section~7 identifies the full
finite-branch commutant and its spectra.

For the notation used below, we write $\mathscr D_\beta$ for regular multiplication--reflection
operators, $\mathscr C_\beta$ for the closed symmetric commuting
algebra, and $J_\beta$ for the operator-norm closure of
$\mathscr D_\beta$ within $\mathscr C_\beta$. The regular
$n$-branch algebra is $\mathscr R_{\beta,n}$, while
$\mathfrak M_{\beta,n}$ denotes the larger class of bounded
fiber-matrix operators satisfying (S1)--(S2); by Corollary~\ref{cor:class-full}, it is exactly the full commutant under the branch isomorphism. Their definitions,
and the endpoint spaces $W_e$ and defect $\Omega_e$, are given
in the sections where they are used. Throughout, $\sigma(T)$ denotes the spectrum of $T$ in the algebra of all bounded operators on the space on which $T$ acts.

\section{Preliminaries and notation}
Throughout, $0<\beta<1$, and functions are complex-valued on $[0,1]$. Write
\[
[f]_\beta=\sup_{s\ne t}\frac{|f(t)-f(s)|}{|t-s|^\beta},\qquad \norm{f}_\beta=\norm{f}_\infty+[f]_\beta.
\]

For a bounded operator $S$ on a complex Banach space, let
$\sigma(S)$ and $\sigma_{\mathrm{ap}}(S)$ denote its spectrum and
approximate point spectrum. An operator is \emph{upper semi-Fredholm}
if its kernel is finite-dimensional and its range is closed, and it is
\emph{Fredholm} if, in addition, its range has finite codimension.
We write $\sigma_{\mathrm{usf}}(S)$ for the set of $\lambda$ such that
$S-\lambda I$ is not upper semi-Fredholm and
$\sigma_{\mathrm{ess}}(S)$ for the set of $\lambda$ such that
$S-\lambda I$ is not Fredholm. Thus we use the Fredholm definition
of essential spectrum throughout.
Let $C^\beta([0,1])$ be the associated Banach algebra. For $\delta,M>0$, let $V_{\beta,\delta,M}(f)$ be the supremum of $\sum_{j=1}^n|f(b_j)-f(a_j)|$ over finite families of intervals $[a_j,b_j]\subseteq[0,1]$ with pairwise disjoint interiors satisfying
\[
\sum_j(b_j-a_j)\le\delta,\qquad\sum_j(b_j-a_j)^\beta\le M.
\]
Define
\[
A_\beta=\left\{f\in C^\beta([0,1]):\lim_{\delta\downarrow0}V_{\beta,\delta,M}(f)=0\text{ for every }M>0\right\}.
\]
\noindent
The defining supremum involves ordinary increments of $f$ over
intervals, with a total-length constraint and a separate $\beta$-power
budget. The dependence on $\beta$ is twofold: in the ambient
H\"older norm and in the admissible interval families. For a closed subinterval $J\subseteq[0,1]$, $A_\beta(J)$ denotes the algebra defined by the same condition on $J$ with its natural norm; $A_\beta(J;\mathbb C^n)$ and $A_\beta(J;M_n(\mathbb C))$ denote the corresponding vector- and matrix-valued spaces.

\begin{proposition}\label{prop:banach}
$A_\beta$ is a unital Banach algebra under pointwise multiplication.
\end{proposition}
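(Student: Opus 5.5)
We take as known that $C^\beta([0,1])$ with $\norm{\cdot}_\beta$ is a unital Banach algebra, since $\norm{fg}_\beta\le\norm{f}_\beta\norm{g}_\beta$. The statement then reduces to three facts: $A_\beta$ is a linear subspace containing the constant $1$, it is closed under pointwise products, and it is closed in $\norm{\cdot}_\beta$. With these, $A_\beta$ is a closed unital subalgebra of a Banach algebra and inherits completeness and the submultiplicative norm.

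\emph{Subspace and unit.} Constants have zero increments, so $V_{\beta,\delta,M}(1)=0$ and $1\in A_\beta$. For the linear structure we use the triangle inequality inside each admissible sum:
\[
V_{\beta,\delta,M}(af+bg)\le|a|V_{\beta,\delta,M}(f)+|b|V_{\beta,\delta,M}(g).
\]
Both terms on the right tend to $0$ as $\delta\downarrow0$, for each fixed $M$.

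\emph{Products.} For $[a_j,b_j]$ we write
\[
f(b_j)g(b_j)-f(a_j)g(a_j)=f(b_j)\bigl(g(b_j)-g(a_j)\bigr)+g(a_j)\bigl(f(b_j)-f(a_j)\bigr).
\]
Summing over an admissible family gives
\[
V_{\beta,\delta,M}(fg)\le\norm{f}_\infty V_{\beta,\delta,M}(g)+\norm{g}_\infty V_{\beta,\delta,M}(f),
\]
which tends to $0$. Hence $fg\in A_\beta$.

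\emph{Closedness.} This is the only step that uses the $\beta$-power budget. Let $f_k\to f$ in $\norm{\cdot}_\beta$ with $f_k\in A_\beta$. The limit $f$ lies in $C^\beta$. For any admissible family with $\sum(b_j-a_j)^\beta\le M$ we estimate
\[
\sum_j|(f-f_k)(b_j)-(f-f_k)(a_j)|\le[f-f_k]_\beta\sum_j(b_j-a_j)^\beta\le M[f-f_k]_\beta.
\]
Therefore
\[
V_{\beta,\delta,M}(f)\le V_{\beta,\delta,M}(f_k)+M\norm{f-f_k}_\beta.
\]
Given $\varepsilon>0$ and $M$, we first choose $k$ so that $M\norm{f-f_k}_\beta<\varepsilon/2$. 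We then choose $\delta_0$ so that $V_{\beta,\delta,M}(f_k)<\varepsilon/2$ for all $\delta<\delta_0$. Consequently $\lim_{\delta\downarrow0}V_{\beta,\delta,M}(f)=0$, and $f\in A_\beta$.

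The main point is this closedness step. It works precisely because the budget $M$ converts the H\"older seminorm into a uniform bound on the variation sums. Without that budget the limit condition would not survive norm convergence. The remaining verifications are routine.
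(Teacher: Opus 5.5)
Your proposal is correct and follows the paper's proof essentially step by step. You use the same product decomposition giving $V_{\beta,\delta,M}(fg)\le\|f\|_\infty V_{\beta,\delta,M}(g)+\|g\|_\infty V_{\beta,\delta,M}(f)$, and the same closedness estimate $V_{\beta,\delta,M}(f)\le V_{\beta,\delta,M}(f_k)+M[f-f_k]_\beta$ with $k$ chosen before $\delta$; the only addition is the routine linear-subspace check, which the paper leaves implicit.
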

\begin{proof}
The H\"older product estimate gives $\norm{fg}_\beta\le\norm f_\beta\norm g_\beta$. For every admissible family,
\[
V_{\beta,\delta,M}(fg)\le\norm f_\infty V_{\beta,\delta,M}(g)+\norm g_\infty V_{\beta,\delta,M}(f).
\]
Thus $A_\beta$ is closed under products. If $f_k\to f$ in $C^\beta$, then
\[
V_{\beta,\delta,M}(f)\le V_{\beta,\delta,M}(f_k)+M[f-f_k]_\beta.
\]
First choose $k$ large and then let $\delta\downarrow0$; this proves that $A_\beta$ is closed in $C^\beta$. The constant function $1$ belongs to $A_\beta$.
\end{proof}
\begin{lemma}\label{lem:ac}
$AC([0,1])\cap C^\beta([0,1])\subseteq A_\beta$.
\end{lemma}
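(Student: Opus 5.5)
Membership in $C^\beta([0,1])$ is assumed, so the only thing to verify is the vanishing condition $\lim_{\delta\downarrow0}V_{\beta,\delta,M}(f)=0$ for every $M>0$. The plan is to discard the $\beta$-power budget $M$ entirely and use only the total-length constraint, together with the classical $\varepsilon$--$\delta$ characterization of absolute continuity.

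First, fix $f\in AC([0,1])\cap C^\beta([0,1])$ and $\varepsilon>0$. By absolute continuity of $f$ (for complex-valued $f$, apply it to the real and imaginary parts, or directly to the definition), there is $\delta_0>0$ such that for every finite family of intervals $[a_j,b_j]\subseteq[0,1]$ with pairwise disjoint interiors and $\sum_j(b_j-a_j)\le\delta_0$ we have $\sum_j|f(b_j)-f(a_j)|<\varepsilon$. The usual definition is stated for pairwise disjoint open intervals. Intervals with disjoint interiors have exactly this property, because the open intervals $(a_j,b_j)$ are pairwise disjoint. So no adjustment is needed. Alternatively, one can use $f(b)-f(a)=\int_a^b f'$ and the absolute continuity of the Lebesgue integral of $|f'|\in L^1$. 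This gives $\sum_j|f(b_j)-f(a_j)|\le\int_{\bigcup[a_j,b_j]}|f'|$, which is small when the total measure is small.

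Second, for any $M>0$ and any $0<\delta\le\delta_0$, every family admissible for $V_{\beta,\delta,M}$ in particular satisfies $\sum_j(b_j-a_j)\le\delta_0$. Hence $V_{\beta,\delta,M}(f)\le\varepsilon$. Since $V_{\beta,\delta,M}(f)$ is nondecreasing in $\delta$, its limit as $\delta\downarrow0$ exists, and this bound forces the limit to be $0$. Because $M$ was arbitrary, $f\in A_\beta$.

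There is no real obstacle. The only point needing care is matching the disjoint-interior families used in $V_{\beta,\delta,M}$ with the disjoint-open-interval families in the definition of absolute continuity, and that match was handled in the first step.
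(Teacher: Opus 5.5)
Your proposal is correct and follows essentially the paper's route: both ignore the $\beta$-power budget and use only the total-length constraint together with absolute continuity. Your alternative via $\sum_j|f(b_j)-f(a_j)|\le\int_{\bigcup[a_j,b_j]}|f'|$ is exactly the paper's argument, while your primary $\varepsilon$--$\delta$ version gives the same conclusion without passing to $f'$.
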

\begin{proof}
For any admissible family, $\sum_j|f(b_j)-f(a_j)|\le\sum_j\int_{a_j}^{b_j}|f'(t)|\,dt$. The intervals have pairwise disjoint interiors, so the integrals sum to the integral over their union, whose Lebesgue measure is at most $\delta$. Absolute continuity of the Lebesgue integral gives the required limit.
\end{proof}
\begin{proposition}\label{prop:strict-holder-inclusions}
Let $c^\beta([0,1])$ denote the little H\"older space, consisting of all $f\in C^\beta([0,1])$ such that
\[
\lim_{r\downarrow0}\sup_{0<|t-s|\le r}\frac{|f(t)-f(s)|}{|t-s|^\beta}=0.
\]
Then both inclusions in
\[
c^\beta([0,1])\subsetneq A_\beta\subsetneq C^\beta([0,1])
\]
are strict.
\end{proposition}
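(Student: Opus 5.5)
The proof splits into three independent parts: the inclusion $c^\beta([0,1])\subseteq A_\beta$, a function in $A_\beta\setminus c^\beta$, and a function in $C^\beta\setminus A_\beta$. I expect the last part to be the only one requiring real work.

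\emph{Inclusion.} Let $f\in c^\beta([0,1])$ and $\varepsilon>0$. By definition of $c^\beta$ there is $r>0$ with $|f(t)-f(s)|\le\varepsilon|t-s|^\beta$ whenever $|t-s|\le r$. Take $\delta\le r$ and any admissible family for $V_{\beta,\delta,M}$. Each interval satisfies $b_j-a_j\le\sum_i(b_i-a_i)\le\delta\le r$, so
\[
\sum_j|f(b_j)-f(a_j)|\le\varepsilon\sum_j(b_j-a_j)^\beta\le\varepsilon M .
\]
Hence $V_{\beta,\delta,M}(f)\le\varepsilon M$ for all $\delta\le r$, and $f\in A_\beta$.

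\emph{First strictness.} Take $g(t)=|t-\tfrac12|^\beta$.
\begin{itemize}
\item $g\in C^\beta$ because $x\mapsto x^\beta$ is $\beta$-H\"older with constant $1$ on $[0,\infty)$, and $\bigl||a|-|b|\bigr|\le|a-b|$.
\item $g$ is monotone on each of $[0,\tfrac12]$ and $[\tfrac12,1]$ and continuous, hence of bounded variation; its derivative is integrable since $\beta-1>-1$. So $g$ is absolutely continuous, and Lemma~\ref{lem:ac} gives $g\in A_\beta$.
\item $g\notin c^\beta$: with $s=\tfrac12$ the difference quotient $|g(t)-g(s)|/|t-s|^\beta$ equals $1$ for every $t\ne\tfrac12$, so the local supremum does not tend to $0$.
\end{itemize}

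\emph{Second strictness.} I would build $f\in C^\beta$ from tent blocks placed in the disjoint intervals $I_k=[2^{-k},2^{-k+1}]$, $k\ge k_0$.
\begin{itemize}
\item Choose widths $w_k\downarrow0$ and integers $N_k\approx w_k^{-\beta}$, so that $N_kw_k^\beta\approx1$. Since $N_kw_k\approx w_k^{1-\beta}$, we can choose $w_k$ small enough that $N_kw_k\le 2^{-k}$, so $N_k$ adjacent tents fit inside $I_k$.
\item Each tent has base width $w_k$ and height $w_k^\beta$; set $f=0$ elsewhere, in particular at the endpoints of every block.
\item For the lower bound, fix $k$ and take the rising halves of the $N_k$ tents in $I_k$. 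Their total length is about $N_kw_k/2\le2^{-k}$, which tends to $0$ as $k\to\infty$. Their $\beta$-power sum is about $N_k(w_k/2)^\beta\le M_0$ for a fixed constant $M_0$. The sum of increments is $N_kw_k^\beta\approx1$.
\item Consequently $V_{\beta,\delta,M_0}(f)\gtrsim1$ for every $\delta>0$, so $f\notin A_\beta$.
\end{itemize}

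\emph{Main obstacle: the global H\"older bound.} One must check that $[f]_\beta<\infty$ for the concatenation of infinitely many tents of varying scales.
\begin{itemize}
\item A single tent of width $w$ and height $w^\beta$ has seminorm bounded by an absolute constant: for $|t-s|\le w$ use slope$\cdot|t-s|\le w^{\beta-1}|t-s|\le|t-s|^\beta$, and for $|t-s|>w$ use the height bound $w^\beta\le|t-s|^\beta$.
\item For $s,t$ in different tents, or different blocks, insert the zero endpoint $z$ of the tent containing $t$, which lies between $s$ and $t$. Then $|f(t)-f(s)|\le|f(t)-f(z)|+|f(z)-f(s)|\le 2C|t-s|^\beta$, using $|t-z|,|z-s|\le|t-s|$.
\item Continuity at $0$ holds because the heights $w_k^\beta\to0$.
\end{itemize}
Together these give $f\in C^\beta\setminus A_\beta$.
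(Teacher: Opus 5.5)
Your proof is correct and follows the paper's argument essentially step for step. The paper also uses the local-modulus bound $V_{\beta,\delta,M}(f)\le M\,\omega_\beta(f,\delta)$ for $c^\beta\subseteq A_\beta$ and the cusp $|t-\tfrac12|^\beta$ via Lemma~\ref{lem:ac}. Its third step is likewise a sawtooth of roughly $w^{-\beta}$ tents per block, whose rising halves have vanishing total length but bounded $\beta$-power and increment sums. The paper uses cusp-shaped cells $\min\{t-a,a+w_n-t\}^\beta$ filling blocks of prescribed length $L_n$, while you use linear tents of height $w_k^\beta$; this difference is immaterial.

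One justification needs a small repair. Bounded variation plus an integrable derivative does not by itself give absolute continuity; the Cantor function is a counterexample. Instead, note that $g$ is $C^1$ away from $t=\tfrac12$ and continuous there. Hence $g(t)=g(0)+\int_0^t g'$ with $g'\in L^1$, which gives absolute continuity.
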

\begin{proof}
For $f\in c^\beta$, put $\omega_\beta(f,r)=\sup_{0<|t-s|\le r}|f(t)-f(s)|/|t-s|^\beta$. Every admissible interval has length at most $\delta$, so
$V_{\beta,\delta,M}(f)\le M\omega_\beta(f,\delta)\to0$.
The function $t\mapsto|t-1/2|^\beta$ belongs to $AC\cap C^\beta$, hence to $A_\beta$ by Lemma~\ref{lem:ac}, but it is not little H\"older.

To prove the second inclusion strict, choose pairwise disjoint closed intervals $I_n\subset(0,1)$ of lengths $L_n\downarrow0$, and integers
\[
m_n=\left\lceil L_n^{-\beta/(1-\beta)}\right\rceil,\qquad w_n=L_n/m_n.
\]
Partition each $I_n$ into $m_n$ cells $[a,a+w_n]$. On each cell set
$f(t)=\min\{t-a,a+w_n-t\}^\beta$, and set $f=0$ outside the cells. The resulting function is continuous: its cell heights are at most $(w_n/2)^\beta\to0$. Moreover, if $s,t$ lie in different cells, a cell endpoint between them and the vanishing of $f$ there give
$|f(t)-f(s)|\le 2^{1-\beta}|t-s|^\beta$; within a single cell the same estimate holds. Thus $f\in C^\beta$.
For each $n$, take the left half of every cell in $I_n$. Their total length is $L_n/2\to0$, while both the sum of their $\beta$-powers and the sum of the corresponding increments of $f$ equal
\[
2^{-\beta}L_n^\beta m_n^{1-\beta}.
\]
Since $1\le L_n^\beta m_n^{1-\beta}\le (1+L_n^{\beta/(1-\beta)})^{1-\beta}$, these sums stay between two positive constants. In particular, the families are admissible for one fixed $M>0$, and $V_{\beta,L_n/2,M}(f)\ge2^{-\beta}$ for every $n$. Hence $f\notin A_\beta$.
\end{proof}
The two strict inclusions explain the choice of $A_\beta$ in this
operator problem. The cusp $|t-c|^\beta$ belongs to $A_\beta$ even
though its local H\"older quotient does not vanish, whereas the
preceding family of small tents belongs to $C^\beta$ but fails the
two-budget increment condition. In particular, $A_\beta$ provides more
local structure than $C^\beta$ without imposing the little H\"older
condition; the exact locality theorem below applies to all of $A_\beta$.

Put $c=1/2$, $u(t)=t(1-t)$, and $(Rf)(t)=f(1-t)$. Reflection is an isometric involution on $A_\beta$: reflecting each admissible interval family preserves both length constraints and the sum of absolute increments. Write $E=\{f:Rf=f\}$ and $O=\{f:Rf=-f\}$.
\begin{proposition}\label{prop:evenodd}
The operators $P_E=(I+R)/2$ and $P_O=(I-R)/2$ are bounded projections onto $E$ and $O$. In particular $A_\beta=E\oplus O$, and every $g\in O$ satisfies $g(c)=0$.
\end{proposition}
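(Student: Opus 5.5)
The proof uses only two facts: $R$ is a bounded linear involution on $A_\beta$, and functions in $A_\beta$ are continuous.

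First I record that $R$ is linear, $R^2=I$, and $\norm{Rf}_\beta=\norm{f}_\beta$. The sup norm and the H\"older seminorm are invariant under $t\mapsto 1-t$. Membership in $A_\beta$ is preserved because reflecting an admissible interval family gives an admissible family with the same sum of increments, as already noted before the statement. Hence $P_E=(I+R)/2$ and $P_O=(I-R)/2$ are bounded, with norm at most $1$.

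Next I verify the projection identities using $R^2=I$:
\[
P_E^2=\tfrac14(I+2R+R^2)=P_E,\qquad P_O^2=P_O,\qquad P_EP_O=P_OP_E=\tfrac14(I-R^2)=0,\qquad P_E+P_O=I.
\]
For the ranges, $RP_E=(R+I)/2=P_E$, so $\operatorname{ran}P_E\subseteq E$. Conversely, if $Rf=f$ then $P_Ef=f$. Thus $\operatorname{ran}P_E=E$, and in the same way $\operatorname{ran}P_O=O$. Since $E=\ker(I-R)$ and $O=\ker(I+R)$, both are closed subspaces.

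The decomposition $f=P_Ef+P_Of$ shows $A_\beta=E+O$. The sum is direct: if $f\in E\cap O$, then $f=Rf=-f$, so $f=0$. Because the projections are bounded, this is a topological direct sum.

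Finally, let $g\in O$. Evaluating $Rg=-g$ at $c=1/2$, which is fixed by $t\mapsto1-t$, gives $g(c)=-g(c)$, so $g(c)=0$.

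There is no genuine obstacle. The only point needing care is that $R$ maps $A_\beta$ into itself, and that has already been established.
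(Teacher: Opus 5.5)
Your proof is correct and follows the same route as the paper: it uses $R^2=I$ and $\norm{R}=1$, and evaluates $Rg=-g$ at the fixed point $c$. You have simply written out the routine projection and direct-sum verifications that the paper leaves implicit.
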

\begin{proof}
Use $R^2=I$ and $\norm R=1$. If $g\in O$, then $g(c)=-g(c)$.
\end{proof}
Let $P_u$ be the closure in $A_\beta$ of the polynomials in $u$.
\begin{proposition}\label{prop:poly}
$P_u\subsetneq E$.
\end{proposition}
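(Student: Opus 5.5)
The plan is to prove the inclusion $P_u\subseteq E$ first, and then to exhibit an even function in $A_\beta$ that a continuous seminorm separates from $P_u$.

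For the inclusion, note that $u(1-t)=(1-t)t=u(t)$. Hence $R(p\circ u)=p\circ u$ for every polynomial $p$, so all polynomials in $u$ lie in $E$. Moreover $E=\ker(I-R)$, and $R$ is bounded on $A_\beta$ (it is an isometry), so $E$ is closed in $A_\beta$. Taking closures gives $P_u\subseteq E$.

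For strictness, I would use the candidate $g(t)=|t-c|^\beta$. It is even, and it belongs to $A_\beta$ by Lemma~\ref{lem:ac}, as already noted in the proof of Proposition~\ref{prop:strict-holder-inclusions}. To separate it from $P_u$, define the local seminorm at the fold point
\[
q(f)=\limsup_{t\to c}\frac{|f(t)-f(c)|}{|t-c|^\beta},
\]
which is finite on $C^\beta$. The key steps are as follows.

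First, $q$ is subadditive and satisfies $q(f)\le[f]_\beta\le\norm f_\beta$. Hence, for any $f_k\to f$ in $A_\beta$,
\[
q(f)\le q(f_k)+[f-f_k]_\beta .
\]

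Second, $q$ vanishes on every polynomial in $u$. Writing $u(t)=\tfrac14-(t-c)^2$, the function $p\circ u$ is Lipschitz in $u$ on $[0,\tfrac14]$. This gives
\[
|p(u(t))-p(u(c))|\le C_p\,(t-c)^2=o(|t-c|^\beta),
\]
since $\beta<1<2$.

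Third, combining the first two steps, $q\equiv0$ on the closure $P_u$.

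Finally, $q(g)=1$, so $g\in E\setminus P_u$.

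No step here is a serious obstacle. The only point requiring care is the second step: it uses that $u$ has a critical point at $c$, so that polynomials in $u$ are flat to second order there, whereas $\beta$-cusps are not. This is exactly the mechanism that fails for the little H\"older-type functions, and it is why the cusp, which $A_\beta$ contains by design, lies outside $P_u$.
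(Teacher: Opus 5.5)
Your proof is correct and follows the paper's argument. You get the inclusion from $Ru=u$ together with closedness of $E$, and strictness by separating the cusp $|t-c|^\beta$ from $P_u$ with the local H\"older quotient at $c$, which vanishes on every $p\circ u$ and is dominated by $[\cdot]_\beta$. One small remark: the critical point of $u$ at $c$ that you emphasize is not actually needed, since $p\circ u$ is $C^1$ and $\beta<1$, so the quotient is already $O(|t-c|^{1-\beta})$. This $C^1$ fact is all the paper uses.
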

\begin{proof}
Since $Ru=u$, $P_u\subseteq E$. Let $v(t)=|t-c|^\beta$. This function is symmetric, absolutely continuous and $\beta$-H\"older, so $v\in E$ by Lemma~\ref{lem:ac}. Every $p(u)$ is continuously differentiable, hence
\[
\lim_{t\to c}\frac{|p(u(t))-p(u(c))|}{|t-c|^\beta}=0,
\]
whereas $|v(t)-v(c)|/|t-c|^\beta=1$. Therefore $[v-p(u)]_\beta\ge1$ for every polynomial $p$ and $v\notin P_u$.
\end{proof}

\medskip
The following elementary composition property is used repeatedly below, in particular to form reciprocals.
\begin{lemma}\label{lem:composition-reciprocal}
Let $J\subseteq[0,1]$ be a closed interval, let $f\in A_\beta(J)$, and let $\vartheta:K\to\mathbb C$ be Lipschitz with constant $L$ on a set $K\subseteq\mathbb C$ containing $f(J)$. Then $\vartheta\circ f\in A_\beta(J)$, with
\[
 [\vartheta\circ f]_{\beta,J}\le L[f]_{\beta,J},\qquad
 V_{\beta,\delta,M}(\vartheta\circ f)\le L V_{\beta,\delta,M}(f).
\]
In particular, if $f$ has no zeros on $J$, then $1/f\in A_\beta(J)$; the same holds entrywise for the inverse of a matrix function in $M_n(A_\beta(J))$ whose determinant has no zeros on $J$.
\end{lemma}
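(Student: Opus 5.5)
The two displayed inequalities come from one pointwise estimate applied separately to the H\"older quotient and to the interval sums. For $s,t\in J$,
\[
|\vartheta(f(t))-\vartheta(f(s))|\le L|f(t)-f(s)|,
\]
which is available because $f(s),f(t)\in f(J)\subseteq K$. Dividing by $|t-s|^\beta$ and taking the supremum gives $[\vartheta\circ f]_{\beta,J}\le L[f]_{\beta,J}$. Summing the same estimate over any family of intervals that is admissible for $(\delta,M)$ gives
\[
\sum_j|\vartheta(f(b_j))-\vartheta(f(a_j))|\le L\sum_j|f(b_j)-f(a_j)|\le L\,V_{\beta,\delta,M}(f).
\]
Taking the supremum over all admissible families yields $V_{\beta,\delta,M}(\vartheta\circ f)\le LV_{\beta,\delta,M}(f)$. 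It follows that $V_{\beta,\delta,M}(\vartheta\circ f)\to0$ as $\delta\downarrow0$ for every $M>0$.

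It remains to see that $\vartheta\circ f$ is bounded. This holds because $f$ is continuous on the compact interval $J$: the image $f(J)$ is compact, so the Lipschitz, hence continuous, function $\vartheta$ is bounded on it. Together with the estimates above, this shows $\vartheta\circ f\in A_\beta(J)$.

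For the reciprocal, suppose $f$ has no zeros on $J$. Since $f(J)$ is compact and avoids $0$, there is $m>0$ with $|f|\ge m$ on $J$. Take $K=\{z:|z|\ge m\}$ and $\vartheta(z)=1/z$. On $K$,
\[
|1/z-1/w|=\frac{|z-w|}{|z||w|}\le m^{-2}|z-w|,
\]
so $\vartheta$ is Lipschitz on $K$ and the first part gives $1/f\in A_\beta(J)$.

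For the matrix case, write $F^{-1}=(\det F)^{-1}\operatorname{adj}F$. Each entry of $\operatorname{adj}F$ is a polynomial in the entries of $F$, so it lies in $A_\beta(J)$ by Proposition~\ref{prop:banach}; that proposition is stated on $[0,1]$, but its proof works verbatim on $J$. The function $\det F$ is likewise a polynomial in the entries, so it belongs to $A_\beta(J)$, and by hypothesis it has no zeros on $J$. Hence $(\det F)^{-1}\in A_\beta(J)$ by the reciprocal case. Each entry of $F^{-1}$ is then a product of elements of $A_\beta(J)$, so it lies in $A_\beta(J)$ by closure under products.

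I expect no real obstacle. The only points needing care are the positive lower bound $m$, which comes from compactness and is what makes $1/z$ globally Lipschitz on $K$, and the remark that the Banach-algebra property carries over from $[0,1]$ to a subinterval $J$.
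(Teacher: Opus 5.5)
Your proposal is correct and follows the paper's proof essentially step for step. The pointwise Lipschitz bound gives both estimates, compactness of $f(J)$ gives boundedness, and $z\mapsto 1/z$ is $m^{-2}$-Lipschitz on $\{|z|\ge m\}$. The matrix case follows from the adjugate formula together with the Banach-algebra property of Proposition~\ref{prop:banach}.
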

\begin{proof}
For all $s,t\in J$, $|\vartheta(f(t))-\vartheta(f(s))|\le L|f(t)-f(s)|$; this gives both estimates, and $\vartheta\circ f$ is bounded since $f(J)$ is compact. If $f$ has no zeros, then $|f|\ge m>0$ on $J$ and $z\mapsto1/z$ is $m^{-2}$-Lipschitz on $\{|z|\ge m\}$. For matrices apply this to the determinant and use the adjugate formula and Proposition~\ref{prop:banach}.
\end{proof}

\section{Exact finite-fiber locality and the multiplier criterion}
Write $A_{\beta,0}=\{b\in A_\beta:b(c)=0\}$ and
\[
h(t)=\operatorname{sgn}(t-c)|t-c|^\beta\quad(t\ne c),\qquad h(c)=0.
\]
The function $h$ belongs to $A_{\beta,0}$ by Lemma~\ref{lem:ac}.

\begin{lemma}\label{lem:uniform-localization}
Let $F\subset[0,1]$ be nonempty and finite. If $g\in A_\beta$ vanishes on $F$, then for every $r>0$ there is $g_r\in A_\beta$ such that
\[
 g_r=g\ \text{on }\{t:\operatorname{dist}(t,F)\le r\},\qquad
 \operatorname{supp}g_r\subset\{t:\operatorname{dist}(t,F)\le 2r\},
 \qquad \|g_r\|_\beta\le C_\beta\|g\|_\beta .
\]
The constant does not depend on $F$, $r$, or $g$.
\end{lemma}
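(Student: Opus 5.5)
The plan is to take $g_r=\chi_r g$ with a Lipschitz cutoff $\chi_r$ adapted to $F$, and then to bound $\|\chi_r g\|_\beta$ by exploiting the vanishing of $g$ on $F$. Set $d(t)=\operatorname{dist}(t,F)$ and
\[
\chi_r(t)=\min\{1,\max\{0,\,2-d(t)/r\}\}.
\]
Then $\chi_r=1$ where $d\le r$, $\chi_r=0$ where $d\ge 2r$, $0\le\chi_r\le1$, and $\chi_r$ is $1/r$-Lipschitz, because $d$ is $1$-Lipschitz. The support and agreement properties are immediate. Also $\chi_r$ is Lipschitz, hence absolutely continuous and $\beta$-H\"older, so $\chi_r\in A_\beta$ by Lemma~\ref{lem:ac}. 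Proposition~\ref{prop:banach} then gives $g_r\in A_\beta$. The only real content is therefore the uniform bound. We cannot simply invoke $\|\chi_r\|_\beta$, because it blows up like $r^{-\beta}$.

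The sup norm is trivial: $\|g_r\|_\infty\le\|g\|_\infty$. For the seminorm, the key estimate uses that $g$ vanishes on $F$. If $d(t)\le 2r$ and $f_0\in F$ is a nearest point, then
\[
|g(t)|=|g(t)-g(f_0)|\le[g]_\beta\, d(t)^\beta\le (2r)^\beta[g]_\beta .
\]
Now fix $s,t$ and write
\[
g_r(t)-g_r(s)=\chi_r(t)\bigl(g(t)-g(s)\bigr)+g(s)\bigl(\chi_r(t)-\chi_r(s)\bigr).
\]
The first term is at most $[g]_\beta|t-s|^\beta$. For the second term we may assume, by symmetry, that $s$ lies in $\operatorname{supp}\chi_r$; if both points lie outside, the difference vanishes, and otherwise we swap the roles of $s$ and $t$ so that the factor $g$ is evaluated in the support. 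Then $|g(s)|\le (2r)^\beta[g]_\beta$. We split into two cases. If $|t-s|\le r$, the Lipschitz bound gives
\[
|\chi_r(t)-\chi_r(s)|\le |t-s|/r\le (|t-s|/r)^\beta ,
\]
so the second term is at most $2^\beta[g]_\beta|t-s|^\beta$. If $|t-s|>r$, we use $|\chi_r(t)-\chi_r(s)|\le1$ together with $r^\beta<|t-s|^\beta$ to get the same bound. Hence $[g_r]_\beta\le(1+2^\beta)[g]_\beta$, and we may take $C_\beta=1+2^\beta\le 3$, independent of $F$, $r$ and $g$.

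The main point to get right is this asymmetric case handling: the factor $|g|$ must be evaluated at a point inside the support, so that the vanishing of $g$ on $F$ is available there. This is exactly where finiteness and nonemptiness of $F$ enter, only through $d$ being well defined and $1$-Lipschitz; closedness of $F$ would suffice. No qualitative uniform estimate on $V_{\beta,\delta,M}(g_r)$ is required, since membership in $A_\beta$ follows from closure under products.
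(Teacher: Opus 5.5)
Your proof is correct and follows the paper's argument. Both use the distance-function cutoff $g_r=\chi_r g$, obtain membership from Lemma~\ref{lem:ac} and the algebra property, bound $|g|\le(2r)^\beta[g]_\beta$ on the support using $g|_F=0$, and split into the cases $|s-t|\le r$ and $|s-t|>r$. The only differences are cosmetic: for $|s-t|>r$ the paper bounds $|g_r(s)|+|g_r(t)|$ directly by the supremum estimate instead of reusing the product expansion, and your version makes the constant explicit, $C_\beta=1+2^\beta$.
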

\begin{proof}
Choose a fixed Lipschitz function $\chi:[0,\infty)\to[0,1]$ equal to one on $[0,1]$ and zero on $[2,\infty)$. Put $\chi_r(t)=\chi(\operatorname{dist}(t,F)/r)$ and $g_r=\chi_r g$. Since $\chi_r$ is Lipschitz and $A_\beta$ is an algebra containing Lipschitz functions, $g_r\in A_\beta$. On its support,
$|g(t)|\le [g]_\beta(2r)^\beta$. For $|s-t|\le r$, expand $g_r(s)-g_r(t)$ and use
$|\chi_r(s)-\chi_r(t)|\le C|s-t|/r$; when $|s-t|>r$, use the preceding supremum bound. Both cases give $[g_r]_\beta\le C_\beta[g]_\beta$; the supremum norm is bounded by $\|g\|_\infty$. This proves the assertion.
\end{proof}

\begin{theorem}\label{thm:exact-finite-fiber}
Let $u\in A_\beta$ be real valued and suppose every level set $F_t=u^{-1}(\{u(t)\})$ is finite. If $T\in\mathcal B(A_\beta)$ and $TM_u=M_uT$, then for every $t\in[0,1]$ there are unique numbers $a_{t,s}$, $s\in F_t$, such that
\begin{equation}\label{eq:exact-fiber}
 (Tf)(t)=\sum_{s\in F_t}a_{t,s}f(s)\qquad(f\in A_\beta).
\end{equation}
In particular, the conclusion holds for every $f\in A_\beta$, without a locally little-H\"older assumption. Conversely, every bounded operator given by \eqref{eq:exact-fiber} commutes with $M_u$.
\end{theorem}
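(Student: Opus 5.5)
The plan is to show first that, for fixed $t$, the functional $f\mapsto (Tf)(t)$ annihilates every $f\in A_\beta$ vanishing on $F_t$. Then I will invoke a Lagrange-type interpolation to get the coefficients. Fix $t$ and write $F=F_t=\{s_1,\dots,s_m\}$. For each $j$ I choose a Lipschitz function $e_j\in A_\beta$ (Lemma~\ref{lem:ac}) with $e_j(s_k)=\delta_{jk}$. Every $f$ then splits as $f=\sum_j f(s_j)e_j+g$ with $g\in A_\beta$ vanishing on $F$. If $(Tg)(t)=0$ for all such $g$, then \eqref{eq:exact-fiber} holds with $a_{t,s_j}=(Te_j)(t)$. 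Uniqueness is immediate, since applying the formula to $e_k$ recovers $a_{t,s_k}$. The converse is also direct: if $f\mapsto Tf$ has the form \eqref{eq:exact-fiber}, then $(T(uf))(t)=\sum_s a_{t,s}u(s)f(s)=u(t)(Tf)(t)$, because $u(s)=u(t)$ on $F_t$.

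The core step is the vanishing of $(Tg)(t)$ when $g|_F=0$. I will use the commutation relation $T M_{u-u(t)}=M_{u-u(t)}T$. For any $k\in A_\beta$ this gives
\[
 (T((u-u(t))k))(t)=(u(t)-u(t))(Tk)(t)=0 .
\]
So it suffices to approximate $g$ in norm by functions of the form $(u-u(t))k$, or at least by functions for which $(T\cdot)(t)$ is controlled. I will localize with Lemma~\ref{lem:uniform-localization}: write $g=g_r+(g-g_r)$. The function $g-g_r$ vanishes on the $r$-neighbourhood $N_r$ of $F$. Away from $N_r$, $u-u(t)$ is continuous and nonvanishing, because the level set is exactly $F$. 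Hence $u-u(t)$ is bounded below by some $\eta_r>0$ on $[0,1]\setminus N_r$. I can then write $g-g_r=(u-u(t))k_r$, where $k_r=(g-g_r)/(u-u(t))$ off $N_r$ and $k_r=0$ on $N_r$. To see that $k_r\in A_\beta$, I apply Lemma~\ref{lem:composition-reciprocal} to a modified function $\max$-truncated to stay off zero, for instance $\tilde u=\vartheta\circ(u-u(t))$ with $\vartheta$ Lipschitz and $|\vartheta|\ge\eta_r$, equal to the identity on $\{|z|\ge\eta_r\}$. Then $k_r=(g-g_r)/\tilde u$, since $g-g_r$ vanishes where $\tilde u$ differs from $u-u(t)$. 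This yields $(T(g-g_r))(t)=0$ exactly, for every $r>0$, hence $(Tg)(t)=(Tg_r)(t)$.

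The remaining obstacle is to show $(Tg_r)(t)\to0$ as $r\downarrow0$. The uniform bound $\|g_r\|_\beta\le C_\beta\|g\|_\beta$ is not enough, since $g_r$ need not tend to zero in $C^\beta$ norm. Here I plan to exploit the $A_\beta$ structure and a weak-type argument. The family $\{g_r\}$ is bounded, and $g_r\to0$ uniformly, since $|g_r|\le[g]_\beta(2r)^\beta$. I would also try to show that the functional $\phi=\delta_t\circ T$ vanishes on the closure of such a bounded, uniformly null family in a suitable sense. First I would attempt a direct decomposition: the Lipschitz cutoff in Lemma~\ref{lem:uniform-localization} can be replaced by averaging over dyadic scales, $\frac1N\sum_{k=1}^N g_{2^{-k}r}$. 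Differences $g_{r}-g_{r'}$ are supported in annuli away from $F$, where the previous exact-vanishing argument applies. So $(Tg)(t)$ equals $\phi$ applied to such an average, and I aim to show the averages have $\beta$-norm $O(N^{-1/2})$ or similar, using disjointness of the annular supports together with the two-budget condition to control increments across many scales. This is the main obstacle. If the averaging estimate fails, the fallback is an explicit bounded family $g_r$ tailored to the $A_\beta$ condition (sums over disjoint intervals) showing norm convergence $\|g_r\|_\beta\to0$ for $g\in A_\beta$ vanishing on $F$ modulo functions of the form $(u-u(t))k$.
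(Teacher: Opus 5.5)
Your reduction to functions vanishing on $F$, the uniqueness and converse, and the neighbourhood step $(T(g-g_r))(t)=0$ are fine. One small correction: $\vartheta$ must be complex valued, since a real continuous map equal to the identity off $(-\eta_r,\eta_r)$ vanishes somewhere, and Lemma~\ref{lem:composition-reciprocal} allows this. The gap is the decisive last step, which you leave open, and the routes you sketch for it cannot work.

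Take $g(x)=\theta(x)|x-t|^\beta$ with $\theta$ smooth, equal to $1$ near $t$ and to $0$ near $F\setminus\{t\}$. It lies in $A_\beta$ and vanishes on $F$. Any function that agrees with $g$ near $t$ has $\beta$-seminorm at least $\limsup_{x\to t}|g(x)|/|x-t|^\beta=1$.
\begin{itemize}
\item Your dyadic averages agree with $g$ on $\{\operatorname{dist}(\cdot,F)\le 2^{-N}r\}$, so their norms never fall below $1$.
\item Suppose $u$ is Lipschitz near $t$ (e.g.\ $u(x)=x(1-x)$). Then every $(u-u(t))k$, $k\in A_\beta$, has vanishing local H\"older quotient at $t$. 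Hence $\operatorname{dist}(g,(u-u(t))A_\beta)\ge 1$, and your fallback fails as well.
\item No ``weak-type'' argument about the single functional $\phi=\delta_t\circ T$ can succeed either. Let $\psi$ be a Banach limit of the one-sided quotients $(f(x)-f(t))/|x-t|^\beta$ as $x\to t$. With $u$ as above, $\psi$ is bounded on $A_\beta$, kills $(u-u(t))A_\beta$ and every function vanishing near $F$, and yet $\psi(g_r)=\psi(g)=1$. So the properties you intended to use cannot force $\phi(g)=0$.
\end{itemize}

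The missing idea is to use the whole function $Tg_r$, not only its value at $t$. Its $\beta$-H\"older constant is at most $C_\beta\|T\|\,\|g\|_\beta$, independently of $r$. Given $s\notin F$ close to $t$, choose $r=r(s)$ so small that $u(s)\notin u(\operatorname{supp}g_r)$. This is possible because $u$ is continuous, $u=u(t)$ on $F$, and $u(s)\ne u(t)$. Then $g_r$ vanishes on a neighbourhood of the finite fiber $F_s$. Your own neighbourhood argument, applied at $s$ instead of $t$, gives $(Tg_r)(s)=0$. Hence
\[
|(Tg)(t)|=|(Tg_r)(t)-(Tg_r)(s)|\le [Tg_r]_\beta|t-s|^\beta\le C_\beta\|T\|\,\|g\|_\beta\,|t-s|^\beta .
\]
Since $F$ is finite, you may let $s\to t$ through points outside $F$. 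So the uniform bound $\|g_r\|_\beta\le C_\beta\|g\|_\beta$ that you dismissed is exactly what is needed: the smallness comes from $|t-s|^\beta$, not from $g_r$. This is precisely the paper's argument.
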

\begin{proof}
Fix $t$, put $\lambda=u(t)$ and $F=F_t$. First note the usual fiber locality: if $k\in A_\beta$ vanishes on a neighborhood of $F$, then $k=(u-\lambda)q$ for some $q\in A_\beta$. Indeed, on the compact support of $k$, $|u-\lambda|$ is bounded away from zero. Multiplying a local reciprocal of $u-\lambda$ by a Lipschitz cutoff gives an element of $A_\beta$; the reciprocal is legitimate because $A_\beta$ is stable under smooth composition on the range of $u$ away from $\lambda$. Consequently $(Tk)(t)=0$.

We now pass from neighborhood fiber locality to exact locality for
\emph{every} function in $A_\beta$. Let $g\in A_\beta$ vanish on $F$.
For each $s\notin F$, choose $r=r(s)>0$ so small that
$u(\operatorname{supp}g_r)$ does not contain $u(s)$, where $g_r$ is
given by Lemma~\ref{lem:uniform-localization}. Such a choice is possible
because $u(s)\ne\lambda$, $u=\lambda$ on $F$, and
$\operatorname{supp}g_r\subset\{x:\operatorname{dist}(x,F)\le 2r\}$.
The difference $g-g_r$ vanishes on a neighborhood of $F$, while
$g_r$ vanishes on a neighborhood of $F_s$. Applying neighborhood
fiber locality at $t$ and at $s$, respectively, gives
$(Tg)(t)=(Tg_r)(t)$ and $(Tg_r)(s)=0$. Hence
\[
 |(Tg)(t)|=|(Tg_r)(t)-(Tg_r)(s)|
 \le [Tg_r]_\beta |t-s|^\beta
 \le C_\beta\|T\|\|g\|_\beta |t-s|^\beta.
\]
\emph{The uniform bound in Lemma~\ref{lem:uniform-localization} is the
key point: its constant is independent of $r=r(s)$ and of $s$. We do
not need $\|g_r\|_\beta$ to tend to zero.} Since $F$ is finite, we
can choose $s\notin F$ arbitrarily close to $t$. Letting $s\to t$
therefore yields $(Tg)(t)=0$ for every $g\in A_\beta$ with $g|_F=0$.

To obtain the formula for an arbitrary $f\in A_\beta$, let
$\ell_s$, $s\in F$, be the Lagrange interpolation polynomials on $F$,
so that $\ell_s(s')=\delta_{s,s'}$ for $s,s'\in F$. These polynomials
belong to $A_\beta$. Set $p=\sum_{s\in F}f(s)\ell_s$. Since
$f-p$ vanishes on $F$, the preceding argument gives
\[
 (Tf)(t)=(Tp)(t)
 =\sum_{s\in F}(T\ell_s)(t)f(s).
\]
Thus \eqref{eq:exact-fiber} holds for \emph{every} $f\in A_\beta$, with
$a_{t,s}=(T\ell_s)(t)$. The interpolation values are independent, so
these coefficients are unique. Conversely, $u(s)=u(t)$ for $s\in F_t$,
so \eqref{eq:exact-fiber} immediately gives $T(uf)=uTf$.
\end{proof}

\begin{corollary}\label{thm:two-point}
For $u(t)=t(1-t)$ and every bounded $T$ commuting with $M_u$,
there are unique scalars $a_T(t),b_T(t)$ for each $t\ne c$, and a unique scalar $a_T(c)$, such that
\[
 (Tf)(t)=a_T(t)f(t)+b_T(t)f(1-t)\quad(t\ne c),\qquad (Tf)(c)=a_T(c)f(c)\quad(f\in A_\beta).
\]
The evaluation map $f\mapsto(f(t),f(1-t))$ is onto $\mathbb C^2$
and has a closed kernel of codimension two.
\end{corollary}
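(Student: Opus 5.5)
This is a direct specialization of Theorem~\ref{thm:exact-finite-fiber}, so the plan is to check its hypotheses for $u(t)=t(1-t)$ and then read off the fibers.

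\textbf{Hypotheses.} The symbol $u$ is a real polynomial, hence Lipschitz, and so lies in $A_\beta$ by Lemma~\ref{lem:ac}. Its level sets are
\[
F_t=u^{-1}(\{u(t)\})=\{t,1-t\},
\]
because $u(s)=u(t)$ is equivalent to $(s-t)(1-s-t)=0$. Thus $F_t$ has exactly two points when $t\ne c$ and reduces to $\{c\}$ when $t=c$. In particular every level set is finite.

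\textbf{Representation and uniqueness.} Theorem~\ref{thm:exact-finite-fiber} now gives unique scalars $a_{t,s}$, $s\in F_t$, such that
\[
(Tf)(t)=\sum_{s\in F_t}a_{t,s}f(s).
\]
For $t\ne c$ set $a_T(t)=a_{t,t}$ and $b_T(t)=a_{t,1-t}$. For $t=c$ set $a_T(c)=a_{c,c}$. Uniqueness is inherited from the theorem. To see it directly, it suffices to realize arbitrary prescribed values on $F_t$ by functions in $A_\beta$, which is exactly the surjectivity claim below.

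\textbf{Evaluation map.} For $t\ne c$, the affine functions
\[
\ell_t(x)=\frac{x-(1-t)}{2t-1},\qquad \ell_{1-t}(x)=\frac{x-t}{1-2t}
\]
are Lipschitz and hence in $A_\beta$. They take the values $(1,0)$ and $(0,1)$ under $f\mapsto(f(t),f(1-t))$. So this map is onto $\mathbb C^2$, and its kernel has codimension two. The map is bounded, since point evaluations satisfy $|f(x)|\le\norm f_\infty\le\norm f_\beta$, so its kernel is closed. If $t=c$ the two coordinates coincide, and the map is read only at the single point $c$.

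There is no real obstacle here. The only point needing care is the degenerate fiber at $c$, where the statement correctly records a single coefficient $a_T(c)$.
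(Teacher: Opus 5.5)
Your proposal is correct and follows the paper's proof. Both apply Theorem~\ref{thm:exact-finite-fiber} to the fibers $\{t,1-t\}$ (and $\{c\}$), use two-point interpolation for uniqueness, surjectivity and codimension two (your explicit affine Lagrange functions play the role of the paper's ``smooth interpolation''), and deduce closedness of the kernel from continuity of point evaluations.
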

\begin{proof}
Theorem~\ref{thm:exact-finite-fiber} gives the formula on all of
$A_\beta$. Smooth interpolation provides arbitrary independent
values at the two points, proving uniqueness, surjectivity, and the
codimension assertion. Continuity of evaluations gives closedness.
\end{proof}

\begin{lemma}\label{lem:multiplier-bound}
There is a constant $C_\beta$ such that for every $b\in A_{\beta,0}$ and $g\in O$, the function
\[
F(t)=\frac{b(t)g(t)}{h(t)}\quad(t\ne c),\qquad F(c)=0,
\]
belongs to $A_\beta$ and satisfies $\|F\|_\beta\le C_\beta\|b\|_\beta\|g\|_\beta$.
\end{lemma}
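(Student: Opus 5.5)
The plan is to estimate $F$ directly, using only $b(c)=0$, $g(c)=0$ (Proposition~\ref{prop:evenodd}) and $|h(t)|=d_t^\beta$, where $d_t=|t-c|$. Oddness of $g$ is never used beyond $g(c)=0$.

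The basic pointwise bounds are $|b(t)|\le[b]_\beta d_t^\beta$ and $|g(t)|\le[g]_\beta d_t^\beta$. Together they give
\[
|F(t)|\le \|b\|_\infty[g]_\beta,\qquad |F(t)|\le[b]_\beta[g]_\beta d_t^\beta .
\]
The second bound gives continuity of $F$ at $c$ and the sup-norm estimate.

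\emph{H\"older seminorm.} Take $s\ne t$.

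\emph{Case 1:} $s,t$ lie on opposite sides of $c$, or $|t-s|\ge\frac12\max(d_s,d_t)$. Then $|F(t)-F(s)|\le|F(t)|+|F(s)|\le 2^{1+\beta}[b]_\beta[g]_\beta|t-s|^\beta$.

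\emph{Case 2:} $s,t$ lie on the same side of $c$ and $|t-s|<\frac12 d_t$, so that $d_s\asymp d_t$. Here I use the three-term decomposition
\[
F(t)-F(s)=\bigl(b(t)-b(s)\bigr)\frac{g(t)}{h(t)}+b(s)\frac{g(t)-g(s)}{h(t)}+\frac{b(s)g(s)}{h(s)h(t)}\bigl(h(s)-h(t)\bigr).
\]
\begin{itemize}
\item The first term is at most $[g]_\beta|b(t)-b(s)|$.
\item The second is at most $C[b]_\beta|g(t)-g(s)|$, since $|b(s)|/d_t^\beta\le C[b]_\beta$.
\item The third is at most $C[b]_\beta[g]_\beta\,\bigl|d_s^\beta-d_t^\beta\bigr|$, since $|b(s)|/d_s^\beta\le[b]_\beta$ and $|g(s)|/d_t^\beta\le C[g]_\beta$. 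Because $|h(s)-h(t)|=|d_s^\beta-d_t^\beta|$ on one side of $c$, this is at most $C[b]_\beta[g]_\beta|t-s|^\beta$.
\end{itemize}
Hence $[F]_\beta\le C_\beta\|b\|_\beta\|g\|_\beta$.

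\emph{Restricted variation.} Fix $M$ and an admissible family. First split any interval containing $c$ at $c$. This keeps the total length and raises the $\beta$-budget by at most a factor of $2$. Now fix $\eta>0$ and sort the resulting intervals $[a_j,b_j]$, each lying on one side of $c$.

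\emph{Short intervals:} those with length $<\frac12 d$, where $d$ is the distance from the interval to $c$. Apply the Case~2 decomposition at the endpoints. The sum of their increments of $F$ is bounded by
\[
\|g\|_\beta V_{\beta,\delta,M'}(b)+C\|b\|_\beta V_{\beta,\delta,M'}(g)+C[b]_\beta[g]_\beta V_{\beta,\delta,M'}(v),\qquad v(t)=|t-c|^\beta .
\]
Here $v\in A_\beta$ by Lemma~\ref{lem:ac}. All three quantities tend to $0$ as $\delta\downarrow0$.

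\emph{Long intervals:} those with length $\ge\frac12 d$. If $\delta<\eta/2$, none of them has $d\ge\eta$. The remaining ones lie on one side of $c$ and have disjoint interiors. Each has length comparable to or exceeding its distance to $c$, so their distances to $c$ decrease geometrically, with ratio at most $2/3$, when ordered toward $c$. By the bound $|F(x)|\le[b]_\beta[g]_\beta d_x^\beta$, their total contribution is at most
\[
[b]_\beta[g]_\beta\sum_k\bigl(d_{a_k}^\beta+d_{b_k}^\beta\bigr)\le C[b]_\beta[g]_\beta\,\eta^\beta .
\]
Letting first $\delta\downarrow0$ and then $\eta\downarrow0$ gives $V_{\beta,\delta,M}(F)\to0$, so $F\in A_\beta$.

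The main obstacle is this last long-interval step. The pointwise bound alone only yields a sum of $d^\beta$ that is bounded by $M$, which is not small. The argument needs the disjointness and geometric-decay observation, combined with the $\eta$-threshold, to turn that bounded sum into a small one. I expect this to need the most care, including checking the ratio $2/3$ when intervals abut.
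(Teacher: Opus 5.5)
Your proof is correct. It differs from the paper's only in how pairs at very different distances from $c$ are handled, and the ``main obstacle'' you flag is created by that choice.

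\textbf{Where the routes diverge.} You did not order $s,t$ by distance to $c$, so your second and third terms divide $b(s)$ and $g(s)$ by $h(t)$. That is why you need the comparability condition $d_s\asymp d_t$.

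The paper orders each same-side pair so that $x=\min(|s-c|,|t-c|)\le y=\max(|s-c|,|t-c|)$. With the nearer point placed in the slots where $b$ and $g$ are divided, every quotient is bounded by $[b]_\beta$ or $[g]_\beta$. The third term is then at most $[b]_\beta[g]_\beta\,(y^\beta-x^\beta)$ for \emph{all} same-side pairs. Your own decomposition works just as well if you label $s$ as the point nearer to $c$.

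Consequently the paper needs no short/long dichotomy and no auxiliary threshold $\eta$. It applies the single increment inequality to every interval after splitting at $c$. It controls the third term by $[b]_\beta[g]_\beta\int_U\beta|t-c|^{\beta-1}\,dt$, which is your $V_{\beta,\delta,M'}(v)$ term for $v=|t-c|^\beta$.

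\textbf{Your long-interval step is sound.} Take two long intervals on the same side, with the $(k+1)$-st nearer to $c$. They satisfy $d_{k+1}+\ell_{k+1}\le d_k$ and $\ell_{k+1}\ge d_{k+1}/2$. This gives the ratio $2/3$, including when they abut. The far endpoints are controlled by $d_{k-1}$, or by $\tfrac32\eta$ for the outermost interval. The pointwise bound $|F|\le[b]_\beta[g]_\beta d^\beta$ then gives the $C\eta^\beta$ estimate.

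\textbf{Comparison.} Your route shows that the far-apart regime can be handled by the pointwise bound alone. The paper's ordering buys a single uniform increment inequality that yields the H\"older and restricted-variation estimates at once.
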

\begin{proof}
Put $B=[b]_\beta$ and $G=[g]_\beta$. Since $b(c)=g(c)=0$,
$|b(t)|\le B|t-c|^\beta$ and $|g(t)|\le G|t-c|^\beta$.
Thus $|F(t)|\le BG|t-c|^\beta$, including at $c$.

Consider $s,t$ on the same side of $c$, and write $x=\min(|s-c|,|t-c|)$,
$y=\max(|s-c|,|t-c|)$. Denote the values at the points at distances
$x$ and $y$ by subscripts $x$ and $y$. For $x>0$, expanding the
increment with the two bounded quotients gives
\[
F_y-F_x=\frac{g_y}{h_y}(b_y-b_x)
 +\frac{b_x}{h_x}(g_y-g_x)
 +b_xg_y\left(\frac1{h_y}-\frac1{h_x}\right).
\]
The first two terms are bounded in modulus by
$G|b_y-b_x|$ and $B|g_y-g_x|$, respectively. For the last term,
using that $h$ has constant sign on this side of $c$, we obtain
\[
\left|b_xg_y\left(\frac1{h_y}-\frac1{h_x}\right)\right|
\le BGx^\beta y^\beta(x^{-\beta}-y^{-\beta})
=BG(y^\beta-x^\beta).
\]
When $x=0$, the estimate follows directly from
$|F_y|\le BG y^\beta$. Since
$y^\beta-x^\beta\le(y-x)^\beta$, this proves the H\"older estimate
for points on the same side of $c$. For points on opposite sides, use
$|F(t)|\le BG|t-c|^\beta$ and
$|t-c|^\beta+|s-c|^\beta\le 2^{1-\beta}|t-s|^\beta$.
Consequently $\|F\|_\beta\le C_\beta\|b\|_\beta\|g\|_\beta$.

It remains to verify the restricted-variation condition. Let $\{[a_j,b_j]\}$
be an admissible family, and split any interval crossing $c$ at $c$.
The resulting intervals have disjoint interiors and total length at
most $\delta$; their sum of $\beta$-powers is at most
$2^{1-\beta}M$. Applying the preceding increment decomposition to
each interval on one side of $c$ gives
\[
\sum_j|F(b_j)-F(a_j)|
\le G V_{\beta,\delta,2^{1-\beta}M}(b)
 +B V_{\beta,\delta,2^{1-\beta}M}(g)
 +BG\int_U\beta|t-c|^{\beta-1}\,dt,
\]
where $U$ is the union of the split intervals (up to endpoints).
Since $|U|\le\delta$ and $\beta|t-c|^{\beta-1}$ is integrable on
$[0,1]$, absolute continuity of its integral shows that the last
term tends to zero uniformly over all admissible families as
$\delta\downarrow0$. The first two terms tend to zero by the
definition of $A_\beta$. Hence $F\in A_\beta$.
\end{proof}

\begin{theorem}\label{thm:multiplier}
Let $a$ be a function on $[0,1]\setminus\{c\}$. The following are equivalent:
\begin{enumerate}
\item[(i)] for every $g\in O$, the function $ag$, defined away from $c$, extends to an element of $A_\beta$, and the resulting map $O\to A_\beta$ is bounded;
\item[(ii)] $ah$ extends to an element of $A_{\beta,0}$.
\end{enumerate}
In that case the extension of every $ag$ takes the value zero at $c$.
\end{theorem}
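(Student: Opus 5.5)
The direction (i)$\Rightarrow$(ii) is immediate once we note that $h\in O$. By Lemma~\ref{lem:ac}, $h\in A_{\beta,0}$, and $Rh=-h$, so $h\in O$. Applying (i) with $g=h$ shows that $ah$ extends to some $F\in A_\beta$. It remains to check $F(c)=0$. Suppose $F(c)\neq0$. Then $|a(t)|\ge |F(c)|/(2|t-c|^\beta)$ near $c$. Now test (i) on the functions $g_k\in O$ built from $h$ cut off at scale $1/k$: put $g_k=\chi_k h$ with $\chi_k$ an even Lipschitz cutoff equal to $1$ near $c$. The values $a g_k$ are then comparable to $F(c)\chi_k$. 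These are uniformly bounded, so this alone does not force $F(c)=0$, and I will need a different test function.

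A cleaner route is to use $g=h\cdot h_2$, where $h_2(t)=|t-c|^{\beta}$ is even and lies in $A_\beta$ by Lemma~\ref{lem:ac}. Then $g\in O$ and $ag=F h_2$ near $c$, but this vanishes at $c$ and gives no contradiction either. So I will instead argue by boundedness with scaling. For $r>0$ put $g_r(t)=\operatorname{sgn}(t-c)\min(|t-c|,r)^\beta$, an odd function in $A_\beta$ with $\|g_r\|_\beta\le C$ uniformly in $r$. For $|t-c|\le r$ we have $ag_r=ah=F$. For $|t-c|=2r$ we have $ag_r=F(t)\,r^\beta/(2r)^\beta\approx 2^{-\beta}F(c)$. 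The Hölder quotient of $ag_r$ between the points $c+r$ and $c+2r$ is therefore at least about $(1-2^{-\beta})|F(c)|r^{-\beta}$, which is unbounded as $r\to0$. This contradicts the boundedness in (i), so $F(c)=0$ and $F\in A_{\beta,0}$. The main care needed here is the uniform bound on $\|g_r\|_\beta$ and keeping $F(c\pm2r)\to F(c)$ under control.

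For (ii)$\Rightarrow$(i), let $b\in A_{\beta,0}$ be the extension of $ah$. For $g\in O$ and $t\neq c$ we have $a(t)g(t)=b(t)g(t)/h(t)$. Lemma~\ref{lem:multiplier-bound} says this function, set to $0$ at $c$, lies in $A_\beta$ with norm at most $C_\beta\|b\|_\beta\|g\|_\beta$. This gives the extension, its boundedness as a map $O\to A_\beta$, and its value zero at $c$.

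For the final assertion, in either case (ii) holds, so the extension of $ag$ is the function $F$ of Lemma~\ref{lem:multiplier-bound}, which vanishes at $c$. The extension is unique by continuity, since $[0,1]\setminus\{c\}$ is dense. I expect the main obstacle to be the scaling argument showing $F(c)=0$ in (i)$\Rightarrow$(ii).
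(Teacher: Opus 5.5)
Your proof is correct. For (ii)$\Rightarrow$(i) and the final assertion it matches the paper: both reduce to Lemma~\ref{lem:multiplier-bound} via $ag=bg/h$. The first half of (i)$\Rightarrow$(ii), applying (i) to $g=h$, is also the same.

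The routes differ only in showing $F(c)=0$. The paper uses one oscillating odd test function, $g_0=h\sin(\log(1/|t-c|))$, which lies in $O$ by Lemma~\ref{lem:ac}. Then $ag_0=F\sin(\log(1/|t-c|))$ has no limit at $c$ unless $F(c)=0$. That argument uses only the \emph{existence of a continuous extension} in (i), so it shows the boundedness clause in (i) is redundant.

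Your scaling argument needs that clause. For each fixed $r$, the function $ag_r$ is a legitimate element of $A_\beta$ even when $F(c)\neq0$. The contradiction comes only from comparing two facts: the H\"older quotient of $ag_r$ between $c+r$ and $c+2r$ grows like $(1-2^{-\beta})|F(c)|\,r^{-\beta}$, while $\|g_r\|_\beta$ stays uniformly bounded.

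What your route gains is elementary test functions. You can settle the point you flagged directly: $g_r=\theta_r\circ h$, where $\theta_r(x)=\operatorname{sgn}(x)\min(|x|,r^\beta)$ is an odd $1$-Lipschitz truncation. Lemma~\ref{lem:composition-reciprocal} then gives $g_r\in O$ with $\|g_r\|_\beta\le\|h\|_\beta$ at once.

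You can also recover the paper's stronger reading. The closed graph theorem yields boundedness of $g\mapsto ag$ from the extension property alone, because convergence in $A_\beta$ implies pointwise convergence off $c$.

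In a write-up, simply delete the two abandoned attempts at the start of your first paragraph.
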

\begin{proof}
(i)$\Rightarrow$(ii). Since $h\in O$, applying the multiplier to $h$ shows that $b=ah$ extends to an element of $A_\beta$. We must also prove $b(c)=0$. Put
\[
 g_0(t)=h(t)\sin\!\left(\log\frac{1}{|t-c|}\right)\quad(t\ne c),\qquad g_0(c)=0.
\]
This function is odd. On each side of $c$ its derivative satisfies $|g'_0(t)|\le(1+\beta)|t-c|^{\beta-1}$; hence $g_0$ is absolutely continuous and $\beta$-H\"older, so $g_0\in O$ by Lemma~\ref{lem:ac}. By (i), $ag_0=b(t)\sin(\log(1/|t-c|))$ extends continuously to $c$. If $b(c)\ne0$, this has no limit at $c$, a contradiction. Thus $b\in A_{\beta,0}$.

(ii)$\Rightarrow$(i). Put $b=ah\in A_{\beta,0}$. For $g\in O$ and $t\ne c$, $ag=bg/h$, so Lemma~\ref{lem:multiplier-bound} gives the extension, its vanishing at $c$, and the norm estimate.
\end{proof}

\begin{corollary}\label{cor:sign}
The coefficient $a(t)=\operatorname{sgn}(t-c)$ defines a bounded multiplier from $O$ to $A_\beta$, although it is discontinuous at $c$.
\end{corollary}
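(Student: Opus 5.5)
The plan is to verify condition (ii) of Theorem~\ref{thm:multiplier} for $a(t)=\operatorname{sgn}(t-c)$ and then invoke (ii)$\Rightarrow$(i). For $t\ne c$ we have
\[
 a(t)h(t)=\operatorname{sgn}(t-c)^2|t-c|^\beta=|t-c|^\beta ,
\]
so $ah$ extends to the continuous cusp $v(t)=|t-c|^\beta$, with value $0$ at $c$.

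First I will show that $v\in A_{\beta,0}$. Proposition~\ref{prop:poly} already notes that $v$ is absolutely continuous and $\beta$-H\"older, hence $v\in A_\beta$ by Lemma~\ref{lem:ac}. Since $v(c)=0$, it lies in $A_{\beta,0}$. With (ii) established, Theorem~\ref{thm:multiplier} shows that $g\mapsto ag$ (extended by $0$ at $c$) is a bounded map $O\to A_\beta$.

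Concretely, Lemma~\ref{lem:multiplier-bound} with $b=v$ gives the bound $\|ag\|_\beta\le C_\beta\|v\|_\beta\|g\|_\beta$. In fact $ag=vg/h$ away from $c$, and this is exactly the function $F$ of that lemma.

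The discontinuity claim is immediate: $a$ takes the value $-1$ to the left of $c$ and $+1$ to the right, so it has no continuous extension to $c$. I expect no real obstacle, since all the analytic work is contained in Lemma~\ref{lem:multiplier-bound}. The only point to check is that the extension of $ah$ vanishes at $c$, which holds because $\beta>0$.
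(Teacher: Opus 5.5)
Your proposal is correct and matches the paper's proof: the paper likewise notes that $ah=|t-c|^\beta\in A_{\beta,0}$ by Lemma~\ref{lem:ac} and applies the implication (ii)$\Rightarrow$(i) of Theorem~\ref{thm:multiplier}. Your extra remarks, on the explicit bound from Lemma~\ref{lem:multiplier-bound} and on the jump of $a$ at $c$, are accurate but add nothing essential.
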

\begin{proof}
Here $ah=|t-c|^\beta\in A_{\beta,0}$ by Lemma~\ref{lem:ac}; apply Theorem~\ref{thm:multiplier}.
\end{proof}

\section{The commuting algebra, norm closure, and projections}
For $v\in A_\beta$ and $b\in A_{\beta,0}$ define
\[
(T_{v,b}f)(t)=v(t)f(t)+\frac{b(t)}{h(t)}(f(1-t)-f(t))\quad(t\ne c),
\qquad (T_{v,b}f)(c)=v(c)f(c).
\]
Put $\mathscr C_\beta=\{T_{v,b}:v\in A_\beta,\ b\in A_{\beta,0}\}$.

\begin{theorem}\label{thm:algebra}
The set $\mathscr C_\beta$ is a closed unital subalgebra of $\mathcal B(A_\beta)$ contained in the commutant of $M_u$. There are constants $m_\beta,M_\beta>0$ such that
\[
m_\beta(\|v\|_\beta+\|b\|_\beta)\le\|T_{v,b}\|\le M_\beta(\|v\|_\beta+\|b\|_\beta).
\]
For $a=b/h$ and $d=e/h$ away from $c$, multiplication is given by
\[
T_{v,b}T_{w,e}=T_{V,B},\quad
V=vw+(b/h)(Rw-w),\quad
B=ve+b(Rw)+b(Re-e)/h,
\]
where each expression is understood through its continuous $A_\beta$ extension.
\end{theorem}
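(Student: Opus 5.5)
We first check that each $T_{v,b}$ is a bounded operator on $A_\beta$ commuting with $M_u$, and we prove the upper norm bound. Write $T_{v,b}f=vf+(b/h)(Rf-f)$. The function $Rf-f=-2P_Of$ lies in $O$, and $\|P_O\|\le1$ by Proposition~\ref{prop:evenodd}. Lemma~\ref{lem:multiplier-bound} then shows that $(b/h)(Rf-f)$ extends to an element of $A_\beta$ vanishing at $c$, with norm at most $2C_\beta\|b\|_\beta\|f\|_\beta$. This matches the definition $(T_{v,b}f)(c)=v(c)f(c)$. Hence
\[
\|T_{v,b}\|\le\|v\|_\beta+2C_\beta\|b\|_\beta.
\]
The operator has the two-point fiber form of Corollary~\ref{thm:two-point}, so the converse part of Theorem~\ref{thm:exact-finite-fiber} gives $T_{v,b}M_u=M_uT_{v,b}$. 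Taking $v=1$, $b=0$ shows that $\mathscr C_\beta$ contains $I$.

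For the lower bound, we recover the coefficients from the test functions $1$ and $h$. Since $R1=1$, we get $T_{v,b}1=v$, so $\|v\|_\beta\le\|T_{v,b}\|$. Since $Rh=-h$, we get $T_{v,b}h=vh-2b$. Therefore
\[
b=\tfrac12\bigl(vh-T_{v,b}h\bigr),\qquad \|b\|_\beta\le\tfrac12\bigl(\|v\|_\beta\|h\|_\beta+\|T_{v,b}\|\|h\|_\beta\bigr)\le \|h\|_\beta\|T_{v,b}\|.
\]
This gives $m_\beta$. It also shows that $(v,b)\mapsto T_{v,b}$ is injective, which justifies the coefficient notation.

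Next we compute the product formula. For $t\ne c$ set $a=b/h$ and $d=e/h$. Write $g=T_{w,e}f=wf+d(Rf-f)$. Since $R(b/h)=-b/h$ fails in general, we work directly with pointwise values. We have
\[
Rg=(Rw)(Rf)+(Rd)(f-Rf),\qquad Rd=Re/(Rh)=-(Re)/h,
\]
so
\[
Rg-g=(Rw)Rf-wf-(Rd+d)(Rf-f).
\]
Substituting into $vg+a(Rg-g)$ and sorting the result into a multiple of $f$ and a multiple of $Rf-f$ gives the following. The coefficient of $f$ is
\[
vw+a(Rw-w)=V.
\]
The coefficient of $Rf-f$ is
\[
vd+a\,Rw-a(Rd+d)=\bigl(ve+b\,Rw+b(Re-e)/h\bigr)/h=B/h,
\]
after a short rearrangement; the term $a(-d-Rd)$ equals $b(Re-e)/h^2$.

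It remains to show that $V\in A_\beta$ and $B\in A_{\beta,0}$. The function $Rw-w$ lies in $O$, so $V$ extends by Lemma~\ref{lem:multiplier-bound}. Likewise $Re-e\in O$, so $b(Re-e)/h$ extends to an element of $A_{\beta,0}$. The terms $ve$ and $b\,Rw$ vanish at $c$. Thus $B\in A_{\beta,0}$. At $c$, both sides of the product identity evaluate to $v(c)w(c)f(c)$, so $\mathscr C_\beta$ is an algebra.

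The main remaining point is closedness. Suppose $T_{v_k,b_k}\to T$ in operator norm. The two-sided estimate makes $(v_k)$ and $(b_k)$ Cauchy in $A_\beta$, so they converge to some $v\in A_\beta$ and $b\in A_{\beta,0}$; evaluation at $c$ is continuous. The upper bound then gives $T_{v_k,b_k}\to T_{v,b}$, hence $T=T_{v,b}\in\mathscr C_\beta$. The step to handle carefully is keeping the product coefficients $V$ and $B$ in the right spaces, and this is exactly what Lemma~\ref{lem:multiplier-bound} provides.
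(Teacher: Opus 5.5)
Your proposal is correct and follows essentially the same route as the paper: the upper bound comes from Lemma~\ref{lem:multiplier-bound} applied to $Rf-f\in O$, the lower bound and closedness come from the test functions $1$ and $h$, and the product is computed by the same pointwise expansion of $R(T_{w,e}f)-T_{w,e}f$. The only difference is that you obtain $B\in A_{\beta,0}$ directly, by applying Lemma~\ref{lem:multiplier-bound} to $b(Re-e)/h$ with $Re-e\in O$, whereas the paper deduces it indirectly from $B=\tfrac12\bigl(Vh-(T_{v,b}T_{w,e})h\bigr)$; your version is slightly more direct and equally valid.
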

\begin{proof}
Write $Qf=Rf-f\in O$. By Theorem~\ref{thm:multiplier} and
Lemma~\ref{lem:multiplier-bound},
\[
\|T_{v,b}f\|_\beta\le
\|v\|_\beta\|f\|_\beta+C_\beta\|b\|_\beta\|Qf\|_\beta
\le C'_\beta(\|v\|_\beta+\|b\|_\beta)\|f\|_\beta.
\]
Here $\|Q\|\le2$, because reflection is an isometry of $A_\beta$.
Conversely, since $T_{v,b}1=v$ and $T_{v,b}h=vh-2b$,
\[
\|v\|_\beta\le\|T_{v,b}\|,\qquad
\|b\|_\beta\le\tfrac12\bigl(\|v\|_\beta\|h\|_\beta
+\|T_{v,b}\|\|h\|_\beta\bigr)
\le\|h\|_\beta\|T_{v,b}\|.
\]
These inequalities give both norm estimates. They also show that if
$T_{v_n,b_n}$ converges in operator norm, then $(v_n)$ and $(b_n)$
are Cauchy in $A_\beta$ and $A_{\beta,0}$, respectively. Their limits
$v,b$ satisfy $T_{v_n,b_n}\to T_{v,b}$ by the upper estimate; hence
$\mathscr C_\beta$ is closed.

To verify multiplication, put $a=b/h$ and $d=e/h$ away from $c$.
For $t\ne c$, expand
\[
T_{v,b}(T_{w,e}f)(t)
=v(t)(T_{w,e}f)(t)
+a(t)\bigl((T_{w,e}f)(1-t)-(T_{w,e}f)(t)\bigr).
\]
To track the reflection sign, note that $Rh=-h$, and hence
$R(e/h)=-(Re)/h$. Writing $d=e/h$, we obtain away from $c$
\[
R(T_{w,e}f)-T_{w,e}f
=(Rw-w)f+(Rw-Rd-d)(Rf-f).
\]
Collecting the coefficient of $f(t)$ and the difference
$f(1-t)-f(t)$ therefore yields
\[
V=vw+a(Rw-w),\qquad
B=ve+b(Rw)+a(Re-e).
\]
The first coefficient belongs to $A_\beta$: $vw\in A_\beta$ and
$a(Rw-w)\in A_\beta$ by the multiplier theorem. The product
$U=T_{v,b}T_{w,e}$ commutes with $M_u$ and has the indicated two-point
formula away from $c$. Applying it to $1$ and $h$ gives
\[
U1=V,\qquad Uh=Vh-2B.
\]
Thus $B=(Vh-Uh)/2\in A_\beta$, and $B(c)=0$ because
$h(c)=0$ and $Uh(c)=0$. Consequently $B\in A_{\beta,0}$ and
$U=T_{V,B}$ also at $c$. The identity is $T_{1,0}$, and every
$T_{v,b}$ commutes with $M_u$ because $Ru=u$. Hence
$\mathscr C_\beta$ is a closed unital subalgebra of the commutant.
\end{proof}

\begin{theorem}\label{cor:full-symmetric}
For $u(t)=t(1-t)$, $c=1/2$, and $h(t)=\operatorname{sgn}(t-c)|t-c|^\beta$, one has
\begin{equation}\label{eq:full-symmetric}
 \boxed{\ \{M_u\}'=\mathscr C_\beta .\ }
\end{equation}
More precisely, every $T\in\{M_u\}'$ has the \emph{unique} representation $T=T_{v,b}$ from Section~3, with
\begin{equation}\label{eq:recovery-symmetric}
 v=T1,\qquad b=\tfrac12\bigl((T1)h-Th\bigr)\in A_{\beta,0}.
\end{equation}
\end{theorem}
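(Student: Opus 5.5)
The inclusion $\mathscr C_\beta\subseteq\{M_u\}'$ is already part of Theorem~\ref{thm:algebra}, so the plan concentrates on the reverse inclusion and on uniqueness. Let $T$ commute with $M_u$. By Corollary~\ref{thm:two-point} there are unique scalars $a_T(t),b_T(t)$ for $t\ne c$ and $a_T(c)$ such that $(Tf)(t)=a_T(t)f(t)+b_T(t)f(1-t)$ for $t\ne c$ and $(Tf)(c)=a_T(c)f(c)$, for all $f\in A_\beta$. We rewrite this for $t\ne c$ as
\[
(Tf)(t)=\bigl(a_T(t)+b_T(t)\bigr)f(t)+b_T(t)\bigl(f(1-t)-f(t)\bigr).
\]
We then put $v=T1\in A_\beta$. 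Testing on $f=1$ gives $v=a_T+b_T$ away from $c$ and $v(c)=a_T(c)$.

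Next we test on $h$. Since $h(1-t)=-h(t)$, we get $(Th)(t)=v(t)h(t)-2b_T(t)h(t)$ for $t\ne c$, and $(Th)(c)=a_T(c)h(c)=0$. We define $b=\tfrac12\bigl((T1)h-Th\bigr)$. This lies in $A_\beta$ because $A_\beta$ is an algebra and $T1,Th\in A_\beta$. It satisfies $b(c)=\tfrac12(v(c)\cdot0-0)=0$, so $b\in A_{\beta,0}$. Away from $c$ we have $b=b_Th$, i.e.\ $b_T=b/h$ there.

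Substituting into the rewritten formula gives $(Tf)(t)=v(t)f(t)+\frac{b(t)}{h(t)}(f(1-t)-f(t))$ for $t\ne c$, and $(Tf)(c)=v(c)f(c)$. This is exactly $T_{v,b}f$, and the identity holds pointwise on all of $[0,1]$. Since $T_{v,b}$ is bounded by Theorem~\ref{thm:algebra}, we get $T=T_{v,b}\in\mathscr C_\beta$, with the recovery formulas \eqref{eq:recovery-symmetric}.

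For uniqueness, suppose $T_{v,b}=T_{v',b'}$. Applying both sides to $1$ gives $v=v'$. Applying them to $h$ gives $vh-2b=v'h-2b'$, hence $b=b'$; alternatively, this follows from the lower norm estimate in Theorem~\ref{thm:algebra}.

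The only delicate point is the behaviour at $c$. There the coefficient $b_T$ from Corollary~\ref{thm:two-point} carries no information, and one might worry that $b$ need not vanish at $c$ or need not be regular. The recovery formula disposes of both concerns at once: $b$ is manifestly in $A_\beta$ as a combination of $T1$, $Th$ and $h$, and $b(c)=0$ follows from $(Th)(c)=0$, which is the exact locality at $c$. No multiplier argument such as Theorem~\ref{thm:multiplier} is needed beyond its use in establishing boundedness of $T_{v,b}$ in Theorem~\ref{thm:algebra}.
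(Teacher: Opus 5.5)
Your proof is correct and follows essentially the same route as the paper: exact two-point locality from Theorem~\ref{thm:exact-finite-fiber}, recovery of $v$ and $b$ from the test functions $1$ and $h$, $b(c)=0$ from locality at the singleton fiber $\{c\}$, and uniqueness from the recovery formulas. The only difference is cosmetic. You substitute $b_T=b/h$ directly into the two-point formula, whereas the paper checks that $T$ and $T_{v,b}$ agree on $1$ and $h$ and then uses that these functions separate $t$ and $1-t$.
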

\begin{proof}
Let $T$ commute with $M_u$. By Theorem~\ref{thm:exact-finite-fiber}, the value $(Tf)(t)$ for $t\ne c$ depends only on $f(t)$ and $f(1-t)$. The functions $1$ and $h$ separate these two points. Put $v=T1$ and $b=(vh-Th)/2$. Both belong to $A_\beta$. The fiber at $c$ is the singleton $\{c\}$, so Theorem~\ref{thm:exact-finite-fiber} gives $(Th)(c)=0$; thus $b(c)=0$. The multiplier theorem (Theorem~\ref{thm:multiplier}) shows that $T_{v,b}\in\mathscr C_\beta$ is bounded. Since $Rh=-h$, we have
\[
 T_{v,b}1=v=T1,\qquad T_{v,b}h=vh-2b=Th.
\]
The finite-fiber representation and separation by $1,h$ imply $(T_{v,b}f)(t)=(Tf)(t)$ for $t\ne c$, for all $f\in A_\beta$. Both sides are continuous at $c$, so equality also holds there. Uniqueness follows from \eqref{eq:recovery-symmetric}. The reverse inclusion $\mathscr C_\beta\subseteq\{M_u\}'$ was proved in Theorem~\ref{thm:algebra}.
\end{proof}

The point of the exact locality step is that the recovery from
$T1$ and $Th$ holds on \emph{all} of $A_\beta$. It does not depend on
approximating an arbitrary function by polynomials in $u$; such a
norm-density argument would be invalid by
Proposition~\ref{prop:poly}. Thus the representation is a full
commutant characterization, not just a regular-coefficient or
little-H\"older representation.

\begin{proposition}
\label{prop:holder-commutant-comparison}
Let $H_\beta=C^\beta([0,1])$ with its usual H\"older norm, and
let $u(t)=t(1-t)$ and $h(t)=\operatorname{sgn}(t-c)|t-c|^\beta$,
where $c=1/2$.
\begin{enumerate}
\item Every operator $T\in\{M_u|_{A_\beta}\}'$ has a unique
bounded extension $\widehat T\in\{M_u|_{H_\beta}\}'$.
The extension map is a continuous algebra embedding with closed,
proper range.
\item For $S\in\{M_u|_{H_\beta}\}'$, the following conditions
are equivalent:
\begin{enumerate}
\item[(i)] $S(A_\beta)\subseteq A_\beta$;
\item[(ii)] $S1,Sh\in A_\beta$;
\item[(iii)] $S=\widehat T$ for some
$T\in\{M_u|_{A_\beta}\}'$.
\end{enumerate}
\end{enumerate}
Thus preservation of restricted variation by an operator in the
ordinary H\"older commutant is determined by two test functions,
although not every such operator preserves $A_\beta$.
\end{proposition}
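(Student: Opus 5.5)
The plan is to first establish the analogue of Theorem~\ref{cor:full-symmetric} on $H_\beta$: every $S\in\{M_u|_{H_\beta}\}'$ is of the form
\[
(Sf)(t)=v(t)f(t)+\frac{b(t)}{h(t)}\bigl(f(1-t)-f(t)\bigr)\quad(t\ne c),\qquad (Sf)(c)=v(c)f(c),
\]
with $v=S1\in H_\beta$ and $b=\tfrac12(vh-Sh)\in H_\beta$, $b(c)=0$. Conversely, every such formula with $v\in H_\beta$, $b\in H_\beta$, $b(c)=0$ defines a bounded operator in this commutant.

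For this I would rerun the earlier arguments with $A_\beta$ replaced by $C^\beta$.
\begin{itemize}
\item The neighborhood fiber locality in Theorem~\ref{thm:exact-finite-fiber} only uses Lipschitz cutoffs and reciprocals of $u-\lambda$ away from its zeros. Both are available in $C^\beta$.
\item Lemma~\ref{lem:uniform-localization} is a pure H\"older estimate, so it holds in $C^\beta$ with the same constant.
\item Hence exact locality, and the two-point formula of Corollary~\ref{thm:two-point}, hold on $H_\beta$.
\item The H\"older part of Lemma~\ref{lem:multiplier-bound} gives $\|bg/h\|_\beta\le C_\beta\|b\|_\beta\|g\|_\beta$ for odd $g\in H_\beta$. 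This yields boundedness of the displayed operator, exactly as in Theorem~\ref{thm:algebra}.
\item Since $h(c)=0$, locality at the singleton fiber $\{c\}$ gives $(Sh)(c)=0$, so $b(c)=0$.
\end{itemize}
Separation of the two fiber points by $1$ and $h$ then shows that $S$ is determined by $S1$ and $Sh$. I expect this transfer of exact locality and the multiplier estimate to $C^\beta$ to be the main work, although each step is a direct check that no restricted-variation property was used.

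For part~1, take $T=T_{v,b}\in\{M_u|_{A_\beta}\}'$ (Theorem~\ref{cor:full-symmetric}) and let $\widehat T$ be given by the same formula on $H_\beta$.
\begin{itemize}
\item \emph{Existence and boundedness.} $\widehat T$ is bounded and commutes with $M_u$ by the first step, and it extends $T$.
\item \emph{Uniqueness.} Any commuting bounded extension $S$ has $S1=T1$ and $Sh=Th$, so $S=\widehat T$ by the determination statement. Note that $A_\beta$ is not dense, so this uniqueness must come from the commutant structure, not from continuity.
\item \emph{Algebra embedding.} Uniqueness shows that $T\mapsto\widehat T$ is an injective algebra homomorphism: $\widehat{T}\widehat{T'}$ is a commuting extension of $TT'$.
\item \emph{Continuity.} We have $\|\widehat T\|\le M'_\beta(\|v\|_\beta+\|b\|_\beta)\le (M'_\beta/m_\beta)\|T\|$ by Theorem~\ref{thm:algebra}.
\item \emph{Closed range.} The range equals $\{S: S1,Sh\in A_\beta\}$ by part~2. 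This set is closed because $S\mapsto S1$ and $S\mapsto Sh$ are continuous and $A_\beta$ is closed in $H_\beta$.
\item \emph{Proper range.} Let $f_0\in C^\beta\setminus A_\beta$ be the tent function from Proposition~\ref{prop:strict-holder-inclusions}. Then $M_{f_0}$ lies in the $H_\beta$-commutant, but $M_{f_0}1=f_0\notin A_\beta$.
\end{itemize}

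For part~2:
\begin{itemize}
\item (i)$\Rightarrow$(ii) is immediate because $1,h\in A_\beta$.
\item (iii)$\Rightarrow$(i) is immediate because $\widehat T$ extends $T$.
\item For (ii)$\Rightarrow$(iii), set $v=S1$ and $b=\tfrac12(vh-Sh)$. Both lie in $A_\beta$, and $b(c)=0$ by the first step, so $b\in A_{\beta,0}$. Theorem~\ref{thm:algebra} gives $T_{v,b}\in\{M_u|_{A_\beta}\}'$. Then $\widehat{T_{v,b}}$ and $S$ agree on $1$ and $h$, hence coincide by the determination statement.
\end{itemize}
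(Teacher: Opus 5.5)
Your proposal is correct and follows essentially the same route as the paper: you transfer exact fiber locality and the H\"older part of the multiplier estimate to $H_\beta$ to obtain $S=S_{v,b}$ with $v=S1$ and $b=\tfrac12\bigl((S1)h-Sh\bigr)$, then use the test functions $1,h$ for uniqueness and for the equivalences, and use $M_g$ with $g\in H_\beta\setminus A_\beta$ for properness. The only minor variation is the closed-range step: you read it off from the characterization $\{S: S1,Sh\in A_\beta\}$ together with closedness of $A_\beta$ in $H_\beta$, whereas the paper deduces it from the two-sided estimate $c_\beta\|T\|\le\|\widehat T\|\le C_\beta\|T\|$; both arguments work, and the lower bound there in fact holds with constant $1$, since $\widehat T$ extends $T$.
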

\begin{proof}
The H\"older part of Lemma~\ref{lem:multiplier-bound} uses only the
H\"older increment inequality. Hence, whenever $v,b\in H_\beta$
and $b(c)=0$, the formula
\[
 S_{v,b}f=vf+\frac b h(Rf-f),
\]
with its continuous value at $c$, defines a bounded operator on
$H_\beta$ commuting with $M_u$. The formula has the same meaning
on $A_\beta$ when $v,b\in A_\beta$.

The converse representation on $H_\beta$ follows
by the same exact fiber-locality argument as
Theorem~\ref{thm:exact-finite-fiber}. The uniform localization estimate
in Lemma~\ref{lem:uniform-localization} uses only H\"older estimates
and Lipschitz cutoffs, so it holds on $H_\beta$ as well; division by
$u-\lambda$ away from a level set is also valid there. Consequently,
for $S\in\{M_u|_{H_\beta}\}'$, the values of $Sf$ at $t$ depend only
on $f(t)$ and $f(1-t)$. Since $1$ and $h$ separate these two points,
$S=S_{v,b}$, where
\[
 v=S1,\qquad b=\tfrac12\bigl((S1)h-Sh\bigr),\qquad b(c)=0.
\]
In particular, the representation on $H_\beta$ is unique.

Now let $T\in\{M_u|_{A_\beta}\}'$. By
Theorem~\ref{cor:full-symmetric}, $T=T_{v,b}$ with
$v,b\in A_\beta$ and $b(c)=0$. The preceding H\"older estimate
extends this formula to a bounded commuting operator
$\widehat T=S_{v,b}$ on $H_\beta$. Uniqueness follows because any
commuting extension has the same values on $1,h$ and hence the same
coefficients.

For $S\in\{M_u|_{H_\beta}\}'$, condition (i) implies (ii)
because $1,h\in A_\beta$. Conversely, if (ii) holds, the recovered
$v,b$ belong to $A_\beta$, and
Theorem~\ref{thm:multiplier} implies that $S_{v,b}$ restricts to a
bounded operator on $A_\beta$. This proves (iii) and (i).

The coefficient estimates in Theorem~\ref{thm:algebra}, and their
H\"older-only analogues, give constants $c_\beta,C_\beta>0$ with
\[
 c_\beta\|T\|_{\mathcal B(A_\beta)}
 \le\|\widehat T\|_{\mathcal B(H_\beta)}
 \le C_\beta\|T\|_{\mathcal B(A_\beta)}.
\]
Thus the range is closed. It is proper: choose
$g\in H_\beta\setminus A_\beta$ from
Proposition~\ref{prop:strict-holder-inclusions}.
The multiplier $M_g$ is bounded on $H_\beta$ and commutes with
$M_u$, but $M_g1=g\notin A_\beta$, so it cannot preserve $A_\beta$.
\end{proof}

The comparison identifies the concrete role of restricted variation:
it imposes a genuine invariance condition on the classical H\"older
commutant, while retaining singular multipliers made possible by
$h\in A_\beta$. Indeed, $M_h$ acts on $A_\beta$ but does not
preserve $c^\beta$, since $h\notin c^\beta$ and $1\in c^\beta$.
On the other side, $M_g$ with $g\in H_\beta\setminus A_\beta$
acts on $H_\beta$ but does not preserve $A_\beta$.
Fiber localization itself also works on the classical H\"older space, as the proof above shows; restricted variation is not needed for locality.

\begin{proposition}\label{prop:center}
For $t<c$, two-point evaluation of $\mathscr C_\beta$ gives all of $M_2(\mathbb C)$. Moreover,
\[
Z(\mathscr C_\beta)=\{M_v:v\in E\}.
\]
\end{proposition}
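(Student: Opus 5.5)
For $t<c$ I would first write out the fiber matrix. For $f\in A_\beta$ put $F=(f(t),f(1-t))^{\mathsf T}$. Since $Rh=-h$, the coefficient $a=b/h$ satisfies $a(1-t)=b(1-t)/h(1-t)=-(Rb)(t)/h(t)$. Hence
\[
\begin{pmatrix}(T_{v,b}f)(t)\\ (T_{v,b}f)(1-t)\end{pmatrix}
=\begin{pmatrix} v(t)-a(t) & a(t)\\ a(1-t) & v(1-t)-a(1-t)\end{pmatrix}F .
\]
The four entries are determined by the four numbers $v(t)$, $v(1-t)$, $b(t)$, $b(1-t)$, and $h(t)\neq0$. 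To reach an arbitrary matrix $\begin{pmatrix}p&q\\ r&s\end{pmatrix}$, I would take $v,b$ to be smooth (for instance polynomial) functions with $v(t)=p+q$, $v(1-t)=s+r$, $b(t)=qh(t)$, $b(1-t)=rh(1-t)$, and $b(c)=0$. Such functions exist by Lagrange interpolation at the three distinct points $t,1-t,c$. They lie in $A_\beta$ by Lemma~\ref{lem:ac}, and then $T_{v,b}\in\mathscr C_\beta$. This proves the first claim.

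For the center, the inclusion $\supseteq$ goes as follows. Let $v\in E$. Then $M_v=T_{v,0}$, and the multiplication formula of Theorem~\ref{thm:algebra} gives $T_{v,0}T_{w,e}=T_{vw,ve}$. In the other order it gives $T_{w,e}T_{v,0}=T_{wv+(e/h)(Rv-v),\,e(Rv)}$. Since $Rv=v$, these agree. Alternatively, one can check directly that $v(t)f(1-t)=v(1-t)f(1-t)$ inside the reflection term.

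For $\subseteq$, let $T=T_{v,b}$ be central. Its fiber matrix at every $t<c$ commutes with all of $M_2(\mathbb C)$, because evaluation is onto $M_2(\mathbb C)$ by the first part and the evaluation of a product is the product of the fiber matrices. So each fiber matrix is scalar. Vanishing off-diagonal entries gives $a(t)=0$ and $a(1-t)=0$, hence $b=0$ on $[0,1]\setminus\{c\}$, and $b(c)=0$ anyway. Equal diagonal entries give $v(t)=v(1-t)$ for $t<c$, so $v\in E$. Then $T=T_{v,0}=M_v$.

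The one point needing care is the multiplicativity of the fiber representation. I would check it through the two-point formula of Corollary~\ref{thm:two-point}: the values of $(ST f)$ at $t$ and $1-t$ are obtained by applying the fiber matrix of $S$ to the vector of values of $Tf$, which is the fiber matrix of $T$ applied to $F$. Together with the surjectivity of $f\mapsto F$, this identifies the matrix of $ST$ as the product. I expect no serious obstacle beyond this sign bookkeeping with $Rh=-h$.
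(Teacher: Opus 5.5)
Your proof is correct and follows the paper's argument: you interpolate $v$ and $b$ at the fiber points (with $b(c)=0$) to realize any $2\times 2$ fiber matrix, conclude that a central element has scalar fiber matrices and hence $b=0$ and $Rv=v$, and verify that symmetric multipliers commute with every $T_{w,e}$. You give more detail than the paper, notably the multiplicativity of the two-point matrices and the explicit use of the product formula of Theorem~\ref{thm:algebra}, but the route is the same.
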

\begin{proof}
At a fixed pair $t,1-t$, smooth interpolation allows independent choices of the values of $v$ and $b$ at both points. The corresponding matrix is
\[
\begin{pmatrix}v(t)-b(t)/h(t)&b(t)/h(t)\\b(1-t)/h(1-t)&v(1-t)-b(1-t)/h(1-t)\end{pmatrix},
\]
which can be any matrix $\begin{psmallmatrix}a_{11}&a_{12}\\a_{21}&a_{22}\end{psmallmatrix}$. A central operator therefore has scalar matrices on every two-point fiber. Its off-diagonal entries vanish, so $b=0$ away from $c$ and hence everywhere. Equality of its diagonal entries gives $v=Rv$. Conversely symmetric multipliers commute with every $T_{w,e}$.
\end{proof}

Put $\mathscr D_\beta=\{M_p+M_qR:p,q\in A_\beta\}$ and $J_\beta=\overline{\mathscr D_\beta}^{\|\cdot\|}$. For $b\in A_{\beta,0}$ define
\[
\Omega_\beta(b)=\min_{L\in\mathbb C}\lim_{r\downarrow0}[b-Lh]_{\beta,[c-r,c+r]}.
\]
\begin{theorem}\label{thm:distance}
The minimum defining $\Omega_\beta$ is attained, and
\[
\frac{\Omega_\beta(b)}{\|h\|_\beta}\le
\operatorname{dist}(T_{v,b},\mathscr D_\beta)\le C_\beta\Omega_\beta(b).
\]
Consequently,
\[
J_\beta=\{T_{v,b}\in\mathscr C_\beta:\Omega_\beta(b)=0\}.
\]
\end{theorem}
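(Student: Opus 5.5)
The plan is to reduce the operator distance to a distance between coefficient functions. The first step is to identify $\mathscr D_\beta$ inside $\mathscr C_\beta$. For $p,q\in A_\beta$ and $t\ne c$,
\[
(M_p+M_qR)f=(p+q)f+q(Rf-f),
\]
so $M_p+M_qR=T_{p+q,\,qh}$, where $qh\in A_{\beta,0}$ because $h(c)=0$. Conversely, every $T_{w,qh}$ with $q\in A_\beta$ lies in $\mathscr D_\beta$. Hence
\[
\mathscr D_\beta=\{T_{w,qh}:w,q\in A_\beta\}.
\]
By Theorem~\ref{thm:algebra}, $T_{v,b}-T_{w,qh}=T_{v-w,\,b-qh}$. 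Since $v-w$ is free, the upper norm estimate gives $\operatorname{dist}(T_{v,b},\mathscr D_\beta)\le M_\beta\inf_q\|b-qh\|_\beta$ (take $w=v$). The estimate $\|b\|_\beta\le\|h\|_\beta\|T_{v,b}\|$ from the proof of Theorem~\ref{thm:algebra} gives
\[
\|T_{v,b}-T_{w,qh}\|\ge\|b-qh\|_\beta/\|h\|_\beta .
\]
So everything reduces to the comparison $\inf_{q\in A_\beta}\|b-qh\|_\beta\asymp\Omega_\beta(b)$.

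\emph{Attainment.} Put $\varphi(L)=\lim_{r\downarrow0}[b-Lh]_{\beta,[c-r,c+r]}$. The limit exists because the local seminorms decrease as $r$ decreases. By the triangle inequality, $|\varphi(L)-\varphi(L')|\le|L-L'|\,\|h\|_\beta$. Also $|h(t)-h(c)|=|t-c|^\beta$, so the local seminorm of $h$ is at least $1$ and $\varphi(L)\ge|L|-[b]_\beta$. Thus $\varphi$ is continuous and coercive on $\mathbb C$, and its minimum is attained.

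\emph{Lower bound.} Fix $q\in A_\beta$ and set $L=q(c)$, so that $b-qh=(b-Lh)-(q-q(c))h$. I will show that $k=(q-q(c))h$ has local seminorm tending to $0$ as $r\downarrow0$. For $s,t\in[c-r,c+r]$, write
\[
k(s)-k(t)=(q(s)-q(c))(h(s)-h(t))+h(t)(q(s)-q(t)).
\]
The first term is at most $\operatorname{osc}_{[c-r,c+r]}q\,\|h\|_\beta|s-t|^\beta$. The second is at most $r^\beta[q]_\beta|s-t|^\beta$. Both coefficients tend to $0$ with $r$. Hence
\[
\Omega_\beta(b)\le\varphi(q(c))=\lim_{r\downarrow0}[b-qh]_{\beta,[c-r,c+r]}\le\|b-qh\|_\beta .
\]
Taking the infimum over $q$ and combining with the reduction above gives $\Omega_\beta(b)/\|h\|_\beta\le\operatorname{dist}(T_{v,b},\mathscr D_\beta)$.

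\emph{Upper bound.} Let $L$ attain the minimum and put $g=b-Lh\in A_{\beta,0}$. Given $\varepsilon>0$, choose $r$ with $[g]_{\beta,[c-2r,c+2r]}\le\Omega_\beta(b)+\varepsilon$. Apply the cutoff of Lemma~\ref{lem:uniform-localization} with $F=\{c\}$ to get $g_r=\chi_rg$. Then $g-g_r$ vanishes on $[c-r,c+r]$, where $|h|\ge r^\beta$. So $q:=L+(g-g_r)/h$ lies in $A_\beta$ by Lemma~\ref{lem:composition-reciprocal}, after a cutoff/partition argument away from $c$. With this choice, $b-qh=g_r$.

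The main obstacle is a \emph{localized} form of Lemma~\ref{lem:uniform-localization}:
\[
\|g_r\|_\beta\le C_\beta\,[g]_{\beta,[c-2r,c+2r]},
\]
with the full seminorm of $g$ replaced by its local one. I expect the proof of that lemma to yield this, because it only uses $g(c)=0$ and increments on the support. Indeed, $\|g_r\|_\infty\le(2r)^\beta[g]_{\beta,\mathrm{loc}}$. Pairs with both points in $[c-2r,c+2r]$ are handled as in the lemma. Pairs with one point outside the support reduce to $|g_r(s)|$ together with a comparison $|s-t|\gtrsim|s-(c\pm2r)|$. This needs care only in the case $|s-t|\le r$, which I will treat first. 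The localized bound gives $\operatorname{dist}\le M_\beta C_\beta(\Omega_\beta(b)+\varepsilon)$.

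Finally, $\mathscr C_\beta$ is closed by Theorem~\ref{thm:algebra}, so $J_\beta\subseteq\mathscr C_\beta$. For $T_{v,b}\in\mathscr C_\beta$, the two-sided estimate shows that $T_{v,b}\in J_\beta$, i.e.\ $\operatorname{dist}(T_{v,b},\mathscr D_\beta)=0$, exactly when $\Omega_\beta(b)=0$.
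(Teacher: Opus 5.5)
Your proposal is correct and follows the paper's proof essentially step for step. Attainment comes from coercivity of the limiting local seminorm. The lower bound comes from $M_p+M_qR=T_{p+q,qh}$, the vanishing local seminorm of $(q-q(c))h$ (which you prove explicitly where the paper only asserts it), and the recovery estimate $\|b\|_\beta\le\|h\|_\beta\|T_{v,b}\|$. The upper bound comes from the cutoff $\chi_r(b-Lh)$ together with $q=L+(1-\chi_r)(b-Lh)/h$. The localized bound $\|\chi_r d\|_\beta\le C_\beta[d]_{\beta,[c-2r,c+2r]}$, which you flag as the main obstacle, is exactly what the paper verifies inline, by the same argument you sketch: a supremum bound from $d(c)=0$, the product estimate on $[c-2r,c+2r]$, and comparison with the zero at the boundary of the support for outside points.
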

\begin{proof}
For fixed $b$, set $\Phi_b(L)=\lim_{r\downarrow0}[b-Lh]_{\beta,[c-r,c+r]}$. This limit exists by monotonicity in $r$. The seminorm inequality makes $\Phi_b$ Lipschitz in $L$, while
\[
\Phi_b(L)\ge 2^{1-\beta}|L|- [b]_\beta,
\]
because $[h]_{\beta,[c-r,c+r]}=2^{1-\beta}$. Thus $\Phi_b$ is continuous and coercive and attains its minimum.

A regular operator $M_p+M_qR$ has the form $T_{p+q,qh}$. Since $(q-q(c))h$ has vanishing local $\beta$-H\"older seminorm at $c$, $\Omega_\beta(b)\le\|b-qh\|_\beta$. Apply $T_{v,b}-T_{p+q,qh}$ to $h$ and use the coefficient recovery estimate to obtain the lower bound.

For the upper bound, choose $L$ minimizing $\Phi_b$ and put $d=b-Lh$.
Then $d(c)=0$. Fix a Lipschitz function $\chi:\mathbb R\to[0,1]$
which equals $1$ on $[-1,1]$ and vanishes outside $[-2,2]$, and set
$\chi_r(t)=\chi((t-c)/r)$ and $d_r=(1-\chi_r)d$.
Write $K_r=[c-2r,c+2r]$ and $H_r=[d]_{\beta,K_r}$.
For $s,t\in K_r$, the product estimate and $|d(t)|\le H_r|t-c|^\beta$
give
\[
|\chi_r(t)d(t)-\chi_r(s)d(s)|
\le H_r|t-s|^\beta+C H_r r^{\beta-1}|t-s|.
\]
Since $|t-s|\le4r$ on $K_r$, the right-hand side is at most
$C_\beta H_r|t-s|^\beta$. If exactly one point lies outside $K_r$,
compare it with the intervening boundary point where $\chi_r d=0$;
this gives the same bound. Also $\|\chi_r d\|_\infty\le C r^\beta H_r$.
Consequently
\[
\|d-d_r\|_\beta=\|\chi_r d\|_\beta\le C_\beta H_r.
\]
The cutoff belongs to $A_\beta$ because it is Lipschitz. Hence
$d_r\in A_\beta$. Since $d_r$ vanishes near $c$, the function
$q_r=d_r/h$, extended by zero around $c$, belongs to $A_\beta$:
away from $c$ the reciprocal of $h$ is smooth, and multiplication by
smooth cutoff functions preserves $A_\beta$.
Thus $T_{v,Lh+d_r}=M_{v-L-q_r}+M_{L+q_r}R$ is regular.
The coefficient estimate in Theorem~\ref{thm:algebra} now yields
\[
\operatorname{dist}(T_{v,b},\mathscr D_\beta)
\le C_\beta\|b-(Lh+d_r)\|_\beta
\le C_\beta H_r.
\]
Letting $r\downarrow0$ gives the upper estimate because
$H_r\downarrow\Phi_b(L)=\Omega_\beta(b)$.
The lower and upper estimates together give the stated characterization
of $J_\beta$.
\end{proof}

The use of local H\"older seminorms in approximation problems has classical precedents in distance formulas for spaces defined by oscillation; see Perfekt \cite{Perfekt2013} and Angrisani et al. \cite{Angrisani2020}. Theorem~\ref{thm:distance} addresses the operator-norm closure of regular-coefficient operators inside $\mathscr C_\beta$. Its quantity $\Omega_\beta(b)$ measures the local obstruction after allowing a scalar correction $Lh$. We use the classical approximation viewpoint without claiming that the underlying cutoff or oscillation method is new; the precise relation of this operator-theoretic characterization to the abstract distance frameworks remains to be established.

\begin{example}\label{ex:strict}
Both inclusions $\mathscr D_\beta\subsetneq J_\beta\subsetneq\mathscr C_\beta$ are strict.
\end{example}
\begin{proof}
Choose $0<\alpha<\beta$. The function $b_1(t)=|t-c|^{\beta+\alpha}$ belongs to $A_{\beta,0}$, and $[b_1]_{\beta,[c-r,c+r]}\to0$. Hence $T_{0,b_1}\in J_\beta$. But $b_1/h=\operatorname{sgn}(t-c)|t-c|^\alpha$ is not $\beta$-H\"older, so this operator is not regular. For $b_2(t)=|t-c|^\beta$, the one-sided quotients of $b_2-Lh$ at $c$ give $\Omega_\beta(b_2)\ge\min_L\max(|1-L|,|1+L|)=1$. Thus $T_{0,b_2}\notin J_\beta$.
\end{proof}

\begin{lemma}\label{lem:bumps}
For $n\ge1$, let $x_n=2^{-(n+1)}$, $w_n=x_n^2$, and $I_n=[c+x_n,c+x_n+w_n]$. These intervals are pairwise disjoint and contained in $(c,1)$. Let $\varphi(s)=\max\{0,1-|2s-1|\}$ on $[0,1]$, extended by zero, and put
\[
b_\alpha(t)=\sum_n\alpha_n w_n^\beta\varphi\!\left(\frac{t-c-x_n}{w_n}\right),\qquad \alpha\in\ell^\infty.
\]
Then $b_\alpha\in A_{\beta,0}$ and $\|b_\alpha\|_\beta\le C_\beta\|\alpha\|_\infty$. If $\alpha\in c_0$, then $\lim_{r\downarrow0}[b_\alpha]_{\beta,[c-r,c+r]}=0$.
\end{lemma}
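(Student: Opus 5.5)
I would treat $b_\alpha$ as a sum of disjointly supported tents $\psi_n(t)=\alpha_n w_n^\beta\varphi((t-c-x_n)/w_n)$. Each tent has height $|\alpha_n|w_n^\beta$ and slope $2|\alpha_n|w_n^{\beta-1}$. Disjointness and containment are elementary. Since $x_{n+1}=x_n/2$, we have $x_n+w_n=x_n(1+x_n)<2x_n=x_{n-1}$, so $I_n$ lies strictly between $c+x_n$ and $c+x_{n-1}$, and $I_1\subset(c,1)$ because $x_1+w_1=1/4+1/16<1/2$. The function $b_\alpha$ vanishes outside $\bigcup I_n$, in particular at $c$ and to the left of $c$. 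It is continuous at $c$ because the heights $w_n^\beta\to0$.

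\textbf{H\"older estimate.} Within one tent, $|\psi_n(t)-\psi_n(s)|\le\min(2|\alpha_n|w_n^{\beta-1}|t-s|,\ |\alpha_n|w_n^\beta)$. This is at most $2\|\alpha\|_\infty|t-s|^\beta$: use the first bound when $|t-s|\le w_n$ and the second otherwise. If $s,t$ lie in different tents, or one lies outside all tents, I insert the tent endpoints between them, where $b_\alpha$ vanishes. Then $|b_\alpha(t)-b_\alpha(s)|\le|b_\alpha(t)-0|+|0-b_\alpha(s)|\le 2\cdot2\|\alpha\|_\infty|t-s|^\beta$. Together with $\|b_\alpha\|_\infty\le\|\alpha\|_\infty$, this gives $\|b_\alpha\|_\beta\le C_\beta\|\alpha\|_\infty$.

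\textbf{Restricted variation (main step).} This is where the rapid shrinking $w_n=x_n^2$ matters. It contrasts with the example in Proposition~\ref{prop:strict-holder-inclusions}, where $L_n^{-\beta/(1-\beta)}$ cells were packed into one interval. Here there is only one tent per dyadic scale. Fix an admissible family with total length at most $\delta$ and $\beta$-power sum at most $M$. I would bound its variation sum by the total variation of $b_\alpha$ on the union $U$ of the intervals, i.e.\ by $\int_U|b_\alpha'|$. The function $b_\alpha$ is absolutely continuous on $[0,1]$: it is Lipschitz on each $[c+\epsilon,1]$ and continuous at $c$, and its total variation is $\sum_n 2|\alpha_n|w_n^\beta<\infty$. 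Thus $|b_\alpha'|\in L^1$, and absolute continuity of the integral makes $\int_U|b_\alpha'|\to0$ uniformly as $|U|\le\delta\to0$. Equivalently, $b_\alpha\in AC\cap C^\beta$, so Lemma~\ref{lem:ac} applies directly and the budget $M$ is not even needed. The only real point is summability of the variations $2|\alpha_n|w_n^\beta=2|\alpha_n|2^{-2\beta(n+1)}$, which is a geometric series. I expect no genuine obstacle here; the care needed is only in justifying absolute continuity at the accumulation point $c$, which the convergent variation handles.

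\textbf{The $c_0$ case.} Let $\alpha\in c_0$. For $r$ small, the tents meeting $[c-r,c+r]$ are those with $x_n<r$, that is, $n\ge N(r)\to\infty$. The first-step argument, restricted to points of $[c-r,c+r]$, uses only those tents, with all comparisons passing through zeros of $b_\alpha$. It therefore gives $[b_\alpha]_{\beta,[c-r,c+r]}\le4\sup_{n\ge N(r)}|\alpha_n|$, which tends to $0$.
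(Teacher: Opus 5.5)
Your proof is correct. The H\"older estimate (one tent at a time, then passing through a zero of $b_\alpha$ at a tent endpoint to get $4\|\alpha\|_\infty$) and the $c_0$ argument (only tents of large index meet $[c-r,c+r]$) are the same as in the paper.

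The one real difference is the restricted-variation step. The paper never claims that $b_\alpha$ itself is absolutely continuous. Instead, it notes that each finite partial sum lies in $AC\cap C^\beta\subseteq A_\beta$. It then bounds the increments of the tail $\sum_{n\ge N}\psi_n$ over any disjoint-interior family by the tail's total variation $2\|\alpha\|_\infty\sum_{n\ge N}w_n^\beta$, and lets $\delta\downarrow0$ before $N\to\infty$.

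You instead show $b_\alpha\in AC([0,1])$ directly and apply Lemma~\ref{lem:ac} once. Your route is shorter and makes it plain that the $\beta$-power budget plays no role. It does, however, rest on the fact that a function continuous at $c$, Lipschitz on every $[c+\epsilon,1]$, and of bounded variation is absolutely continuous on $[c,1]$. You should state the one-line reason. Write $b_\alpha(x)-b_\alpha(c+\epsilon)=\int_{c+\epsilon}^x b_\alpha'$, note that $\int_c^1|b_\alpha'|\le\sum_n 2|\alpha_n|w_n^\beta<\infty$, and let $\epsilon\downarrow0$ using continuity at $c$ and dominated convergence. This exhibits $b_\alpha$ as an indefinite integral. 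The step needs saying because continuity plus bounded variation alone would not suffice, as the Cantor function shows; what saves you is that the only bad point is the single accumulation point $c$.

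The paper's splitting avoids this measure-theoretic point. It uses only two facts: increments over intervals with disjoint interiors are dominated by total variation, and membership in $A_\beta$ survives perturbations of uniformly small variation. Both arguments rest on the same summability of $\sum_n w_n^\beta$.
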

\begin{proof}
Write $f_n(t)=\alpha_n w_n^\beta\varphi((t-c-x_n)/w_n)$, extended by zero outside $I_n$, and put $A=\|\alpha\|_\infty$. Since $\varphi$ is $2$-Lipschitz, bounded by $1$, and vanishes at both endpoints, for all $s,t\in[0,1]$,
\[
|f_n(t)-f_n(s)|\le |\alpha_n|\min\{2w_n^{\beta-1}|t-s|,\,2w_n^\beta\}
\le 2|\alpha_n||t-s|^\beta.
\]
In particular, each zero-extended bump has $\beta$-H\"older seminorm at most $2|\alpha_n|$. At any point, at most one bump is nonzero. If $s,t$ belong to the same support, the preceding estimate applies. If they belong to distinct supports (or one is outside every support), choose $z_s$ on the boundary of the support containing $s$, between $s$ and $t$, and similarly choose $z_t$ for $t$ when needed. Because the bumps vanish at their support boundaries,
\[
|b_\alpha(s)-b_\alpha(t)|
\le 2A\bigl(|s-z_s|^\beta+|t-z_t|^\beta\bigr)
\le 4A|s-t|^\beta.
\]
(The term for a point outside the supports is zero.) Hence $[b_\alpha]_\beta\le4A$, while $\|b_\alpha\|_\infty\le A\sup_n w_n^\beta$. The sum defines a continuous function at $c$ since $w_n^\beta\to0$, and $b_\alpha(c)=0$.

Each triangular bump has total variation $2|\alpha_n|w_n^\beta$. Thus
\[
\operatorname{Var}_{[0,1]}\Bigl(\sum_{n\ge N}f_n\Bigr)
\le 2A\sum_{n\ge N}w_n^\beta\longrightarrow0.
\]
Every finite sum belongs to $AC([0,1])\cap C^\beta([0,1])$ and therefore to $A_\beta$ by Lemma~\ref{lem:ac}. For a family of disjoint intervals occurring in the definition of restricted variation, the sum of absolute increments of the tail is bounded by its total variation, uniformly in the family. Splitting $b_\alpha$ into a finite initial sum and a tail, and then letting first the total interval length tend to zero and next $N\to\infty$, proves $b_\alpha\in A_{\beta,0}$.

Finally, if $\alpha\in c_0$, then for every $N$ there exists $r_N>0$ such that $[c-r_N,c+r_N]$ meets only supports with indices $n\ge N$. The same two-point estimates, applied to this tail, give
\[
[b_\alpha]_{\beta,[c-r_N,c+r_N]}
\le4\sup_{n\ge N}|\alpha_n|\longrightarrow0.
\]
Monotonicity of the local seminorm as $r\downarrow0$ completes the proof.
\end{proof}

\begin{theorem}\label{thm:quotient}
The Banach space quotient $\mathscr C_\beta/J_\beta$ contains a complemented subspace isomorphic to $\ell^\infty/c_0$.
\end{theorem}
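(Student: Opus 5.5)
We embed $\ell^\infty$ via the bump construction of Lemma~\ref{lem:bumps} and then build a bounded left inverse modulo $J_\beta$. Define $\Phi:\ell^\infty\to\mathscr C_\beta$ by $\Phi(\alpha)=T_{0,b_\alpha}$. By Lemma~\ref{lem:bumps} and the upper estimate of Theorem~\ref{thm:algebra}, $\Phi$ is bounded and linear. If $\alpha\in c_0$, Lemma~\ref{lem:bumps} gives $\lim_{r\downarrow0}[b_\alpha]_{\beta,[c-r,c+r]}=0$, so $\Omega_\beta(b_\alpha)=0$ (take $L=0$), and Theorem~\ref{thm:distance} gives $\Phi(\alpha)\in J_\beta$. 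Hence $\Phi$ induces a bounded map $\widetilde\Phi:\ell^\infty/c_0\to\mathscr C_\beta/J_\beta$. To show that its range is complemented and that $\widetilde\Phi$ is an isomorphism onto it, it suffices to construct a bounded linear $\Psi:\mathscr C_\beta\to\ell^\infty/c_0$ that vanishes on $J_\beta$ and satisfies $\Psi\circ\Phi=\text{quotient map}$. Then $\widetilde\Psi\widetilde\Phi=\mathrm{id}$, so $\widetilde\Phi\widetilde\Psi$ is a bounded projection onto the range of $\widetilde\Phi$, which is isomorphic to $\ell^\infty/c_0$.

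To define $\Psi$, for $T=T_{v,b}$ set
\[
\psi_n(b)=\frac{b(m_n)-b(c+x_n)}{(w_n/2)^\beta},\qquad m_n=c+x_n+w_n/2,
\]
and let $\Psi(T)=[(\psi_n(b))_n]\in\ell^\infty/c_0$. Since $|\psi_n(b)|\le[b]_\beta\le\|h\|_\beta\|T\|$ by the coefficient recovery in Theorem~\ref{thm:algebra}, $\Psi$ is bounded. For the bumps, $b_\alpha(c+x_n)=0$ and $b_\alpha(m_n)=\alpha_nw_n^\beta$, so $\psi_n(b_\alpha)=2^\beta\alpha_n$. Rescaling $\Psi$ by $2^{-\beta}$ then gives $\Psi\Phi=$ the quotient map.

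The main obstacle is verifying that $\Psi$ vanishes on $J_\beta$, that is, that $(\psi_n(b))\in c_0$ whenever $\Omega_\beta(b)=0$. By Theorem~\ref{thm:distance} the minimum is attained at some $L$, with $[b-Lh]_{\beta,[c-r,c+r]}\to0$. Both points $c+x_n$ and $m_n$ lie within distance $2x_n\to0$ of $c$, so $|\psi_n(b-Lh)|\le[b-Lh]_{\beta,[c-3x_n,c+3x_n]}\to0$. It then remains to show that $\psi_n(h)\to0$. The increment of $h$ is
\[
(x_n+w_n/2)^\beta-x_n^\beta\le\beta x_n^{\beta-1}w_n/2,
\]
and dividing by $(w_n/2)^\beta$ gives a quantity of order $x_n^{\beta-1}w_n^{1-\beta}=x_n^{1-\beta}\to0$, since $w_n=x_n^2$. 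This is precisely why the bumps are placed at scale $w_n\ll x_n$: the scalar correction $Lh$ is invisible to $\psi_n$. Therefore $\psi_n(b)\to0$, $\Psi$ annihilates $J_\beta$, and the theorem follows.
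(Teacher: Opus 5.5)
Your proposal is correct and is essentially the paper's proof. It uses the same embedding $\alpha\mapsto T_{0,b_\alpha}$, the same difference quotients at $c+x_n$ and $c+x_n+w_n/2$ (yielding the factor $2^\beta$), and the same argument that $J_\beta$ is annihilated modulo $c_0$: the attained minimizer $L$, plus the estimate $\psi_n(h)=O(x_n^{1-\beta})$ coming from $w_n=x_n^2$. The only cosmetic difference is that you compose with the quotient map and rescale by $2^{-\beta}$ at the outset, whereas the paper keeps $S$ valued in $\ell^\infty$ and works with $\widetilde S\widetilde F=2^\beta I$.
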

\begin{proof}
Define $F:\ell^\infty\to\mathscr C_\beta$ by $F(\alpha)=T_{0,b_\alpha}$ and define
\[
S(T_{v,b})_n=\frac{b(c+x_n+w_n/2)-b(c+x_n)}{(w_n/2)^\beta}.
\]
The coefficient estimate in Theorem~\ref{thm:algebra} makes $S$ bounded,
since each displayed quotient is bounded by $[b]_\beta$.
Lemma~\ref{lem:bumps} gives $SF=2^\beta I$.

We verify that $S(J_\beta)\subset c_0$. If
$T_{v,b}\in J_\beta$, Theorem~\ref{thm:distance} gives
$\Omega_\beta(b)=0$. Choose $L\in\mathbb C$ attaining the minimum;
then
\[
\lim_{r\downarrow0}[b-Lh]_{\beta,[c-r,c+r]}=0.
\]
Put $s_n=c+x_n$ and $t_n=s_n+w_n/2$. The contribution of
$b-Lh$ to $S(T_{v,b})_n$ tends to zero, because
$s_n,t_n\to c$ and $|t_n-s_n|=w_n/2$.
For the remaining term, the mean value theorem on the right of $c$
gives
\[
\frac{|h(t_n)-h(s_n)|}{(w_n/2)^\beta}
\le \beta x_n^{\beta-1}(w_n/2)^{1-\beta}
=\beta 2^{\beta-1}x_n^{1-\beta}\longrightarrow0,
\]
where $w_n=x_n^2$ and $0<\beta<1$. Hence
$S(T_{v,b})\in c_0$.
Conversely, if $\alpha\in c_0$, Lemma~\ref{lem:bumps}
shows $\Omega_\beta(b_\alpha)=0$, so $F(c_0)\subset J_\beta$.
The bounded maps $F$ and $S$ therefore induce bounded maps
\[
\widetilde F:\ell^\infty/c_0\to\mathscr C_\beta/J_\beta,
\qquad
\widetilde S:\mathscr C_\beta/J_\beta\to\ell^\infty/c_0.
\]
Their composition is $\widetilde S\widetilde F=2^\beta I$.
Thus $\widetilde F$ is an isomorphic embedding and
$2^{-\beta}\widetilde F\widetilde S$ is a bounded projection
onto its range.
\end{proof}

Complemented copies of $\ell^\infty/c_0$ in Banach-space quotients have antecedents in the work of Plichko and W\'ojtowicz~\cite{PlichkoWojtowicz2007}. The preceding construction gives explicit maps in the present operator setting. Although $J_\beta$ is a closed subalgebra of $\mathscr C_\beta$, the quotient is considered only as a Banach space. In particular, the
complementation in Theorem~\ref{thm:quotient} is an assertion about the
Banach-space quotient: the maps $\widetilde F$ and $\widetilde S$
produce an explicit bounded projection, not an algebra homomorphism.

\begin{proposition}\label{prop:projections}
A bounded projection $P\in\mathscr C_\beta$ has kernel $E$ if and only if $P=P_a=M_a(I-R)$, where $ah\in A_{\beta,0}$ and
\[
a(t)+a(1-t)=1\qquad(t\ne c).
\]
\end{proposition}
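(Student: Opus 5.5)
We prove both directions by working in the representation $T_{v,b}$ of Theorem~\ref{cor:full-symmetric}, together with the fiberwise two-point matrix picture from Proposition~\ref{prop:center}.

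\emph{Sufficiency.} Suppose $ah\in A_{\beta,0}$ and $a+Ra=1$ away from $c$. We first check that $P_a=M_a(I-R)$ belongs to $\mathscr C_\beta$. For $f\in A_\beta$ the function $(I-R)f$ lies in $O$, so Theorem~\ref{thm:multiplier} shows that $a(I-R)f$ extends to an element of $A_\beta$ vanishing at $c$, and that the map is bounded. In the notation of Section~3 this gives $P_a=T_{0,-ah}$, since $T_{0,b}f=(b/h)(Rf-f)$. Hence $P_a\in\mathscr C_\beta$.

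For idempotence, let $t\ne c$ and $g=(I-R)f\in O$. Then
\[
(I-R)(ag)(t)=a(t)g(t)-a(1-t)g(1-t)=(a(t)+a(1-t))g(t)=g(t).
\]
Thus $P_a^2f=a(I-R)f=P_af$ off $c$, and by continuity at $c$ as well. For the kernel, $E\subseteq\ker P_a$ is clear. Conversely, if $P_af=0$, then at each $t\ne c$ we have $a(t)g(t)=0$ and $a(1-t)g(1-t)=0$. Since $a(t)+a(1-t)=1$, at least one of these two coefficients is nonzero, and $g(1-t)=-g(t)$, so $g(t)=0$. Hence $g=0$ and $f\in E$.

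\emph{Necessity.} Let $P=T_{v,b}\in\mathscr C_\beta$ be a projection with $\ker P=E$. From $P1=0$ we get $v=0$, so $P=T_{0,b}=M_a(R-I)$ with $a=b/h$. Writing $a'=-a$, this is $P=M_{a'}(I-R)$, and $a'h=-b\in A_{\beta,0}$; we relabel $a'$ as $a$.

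It remains to extract the identity $a+Ra=1$ from $P^2=P$. Fix $t\ne c$. By Corollary~\ref{thm:two-point}, choose $f\in A_\beta$ with $f(t)=1$ and $f(1-t)=-1$, so $g=(I-R)f$ has $g(t)=2$ and $g(1-t)=-2$. Evaluating $P^2f=Pf$ at $t$ and using the computation above gives
\[
(a(t)+a(1-t))\,a(t)\cdot 2=a(t)\cdot 2 .
\]
The same evaluation at $1-t$ gives the analogous identity with $a(1-t)$ as the final factor. Equivalently, the two-point matrix $\begin{psmallmatrix}a(t)&-a(t)\\-a(1-t)&a(1-t)\end{psmallmatrix}$ is idempotent, which forces its trace $a(t)+a(1-t)$ to lie in $\{0,1\}$.

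\emph{The main obstacle} is to rule out trace $0$, which is exactly the case $a(t)=a(1-t)=0$. Here we use the kernel hypothesis. If $a(t)=a(1-t)=0$, then every $f$ with $f(t)=1$ and $f(1-t)=-1$ has $(Pf)(s)$ determined only fiberwise, so $Pf$ vanishes on this fiber. We need to produce a non-even element of $\ker P$ from this, which the fiber-by-fiber information alone does not give.

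The plan is to localize. Take $f$ odd and supported in a small neighborhood of the pair $\{t,1-t\}$, on which $a$ vanishes at the two points. Since $a$ is continuous off $c$, it is small near these points but not necessarily zero, so this does not immediately give $Pf=0$.

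The workable route is to use ranks instead. On each fiber with $t\ne c$, the matrix of $P$ restricted to the odd direction $(1,-1)$ is $(a(t)+a(1-t))\cdot\mathrm{id}$. The condition $\ker P=E$ forces $P$ to be injective on $O$. If $a(t)+a(1-t)=0$ at some $t$, then $Pg(t)=0$ for every $g\in O$. The set where the trace equals $1$ is therefore closed in $[0,1]\setminus\{c\}$, since the trace $a+Ra$ is continuous there, and also open, since it is one of two values of a continuous function. It is also symmetric.

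If the trace vanished on a nonempty open symmetric set $U$, we take a nonzero odd function $g$ supported in $U$, for instance a smooth odd bump. Then $Pg=0$ on $U$. Outside $U$ we have $g=0$, so $Pg=0$ there as well, because $P$ acts fiberwise and the fibers are symmetric. Thus $g\in\ker P\setminus E$, a contradiction. Hence $a(t)+a(1-t)=1$ for all $t\ne c$, which completes the proof.
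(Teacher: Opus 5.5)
Your proof is correct, and for the one nontrivial point, excluding fibers with $a(t)=a(1-t)=0$, you take a different and more elementary route than the paper.

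The paper argues quantitatively at a single fiber.
\begin{itemize}
\item $P|_O:O\to\operatorname{ran}P$ is a bounded bijection onto a closed subspace, so the bounded inverse theorem gives $\|Pg\|_\beta\ge\eta\|g\|_\beta$ for $g\in O$.
\item Odd bumps $g_r$ of height $r^\beta$ at a degenerate fiber $\{t_0,1-t_0\}$ have $\|g_r\|_\beta$ bounded below.
\item Since $a$ is locally $\beta$-H\"older and vanishes at $t_0$ and $1-t_0$, the product estimate gives $\|Pg_r\|_\beta=2\|ag_r\|_\beta=O(r^\beta)$, contradicting the lower bound.
\end{itemize}

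You instead use two facts: $a=-b/h$ is continuous on $[0,1]\setminus\{c\}$, and idempotence forces the trace $a+Ra$ to take only the values $0$ and $1$. Hence the degenerate set is open and symmetric. A single smooth odd bump supported in it is then an exact element of $\ker P\setminus E$. In fact, connectedness of $[0,c)$ and symmetry show the trace is constant off $c$, so the degenerate alternative is simply $P=0$.

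Your route avoids the open mapping theorem and the H\"older product estimates. The paper's argument is purely local at one fiber and relies on the lower bound of $P$ on $O$ rather than on the global topology of the trace.

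Two presentational points:
\begin{itemize}
\item Delete the two abandoned attempts in the ``main obstacle'' paragraph.
\item Rephrase the claim that the matrix of $P$ restricted to the odd direction is $(a(t)+a(1-t))\,\mathrm{id}$. On a fiber, $P$ sends $(1,-1)$ to $(2a(t),-2a(1-t))$, which need not be a multiple of $(1,-1)$. Only the odd component is scalar: $P_OPg=(a+Ra)g$ for $g\in O$. Your final argument does not use this claim, so this is cosmetic.
\end{itemize}
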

\begin{proof}
Suppose $P=T_{v,b}$ is a projection with kernel $E$. Since $1\in E$, $P1=v=0$. Set $a=-b/h$, so $P=M_a(I-R)$ and $ah\in A_{\beta,0}$. Idempotence gives $a(t)(a(t)+a(1-t)-1)=0$ and its reflected counterpart. It remains to exclude simultaneous zeros of $a$ at a fiber.

The restriction $P|_O:O\to\operatorname{ran}P$ is a bounded bijection: injectivity follows from $O\cap E=\{0\}$ and surjectivity from $P(f)=P(P_Of)$. Since both spaces are Banach, the bounded inverse theorem gives $\|Pg\|_\beta\ge\eta\|g\|_\beta$ for $g\in O$ and some $\eta>0$. If $a(t_0)=a(1-t_0)=0$ at $t_0\ne c$, take odd triangular bumps of height $r^\beta$ supported in small neighborhoods of $t_0$ and $1-t_0$. Their H\"older norms are bounded below. More explicitly, on the two supports $a$ is $\beta$-H\"older and vanishes at their centers. Since $\|g_r\|_\infty=O(r^\beta)$ and $[g_r]_\beta=O(1)$, the product estimate gives
\[
[ag_r]_\beta\le \|a\|_{\infty,U_r}[g_r]_\beta+[a]_{\beta,U_r}\|g_r\|_\infty=O(r^\beta),
\]
where $U_r$ is the union of the two supports. The same estimate applies across their boundaries because $g_r$ vanishes there, and the supports stay a fixed positive distance apart. Thus $\|Pg_r\|_\beta=2\|ag_r\|_\beta\to0$, a contradiction. Thus $a$ and $Ra$ never vanish simultaneously. The two idempotence identities imply $a+Ra=1$ on every fiber.

Conversely, if $ah\in A_{\beta,0}$, Theorem~\ref{thm:multiplier} makes $P_a$ bounded. The identity $a+Ra=1$ gives $P_a^2=P_a$; clearly $E\subset\ker P_a$, and if $P_af=0$ then the same identity at each fiber forces $f=Rf$, so $\ker P_a=E$.
\end{proof}

\begin{proposition}\label{prop:projection-pairs}
If $P_a$ and $P_d$ are projections as in Proposition~\ref{prop:projections}, then
\[
P_aP_d=P_a,\qquad P_dP_a=P_d,\qquad(P_a-P_d)^2=0.
\]
In particular, $\sigma(P_a-P_d)=\{0\}$.
\end{proposition}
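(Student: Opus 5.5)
The plan is to compute directly with the explicit form $P_a=M_a(I-R)$ from Proposition~\ref{prop:projections}, using only the identity $(I-R)M_a(I-R)=M_{a+Ra}(I-R)$ away from $c$. First, for $f\in A_\beta$ and $t\neq c$, write $g=(I-R)f\in O$. Then $(P_dg)(t)=d(t)g(t)$. Applying $I-R$ gives
\[
\bigl((I-R)M_dg\bigr)(t)=d(t)g(t)-d(1-t)g(1-t)=\bigl(d(t)+d(1-t)\bigr)g(t),
\]
because $g(1-t)=-g(t)$. Since $d+Rd=1$ off $c$, this equals $g(t)$. Hence $(I-R)P_d=(I-R)$ pointwise on $[0,1]\setminus\{c\}$.

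Both sides are continuous functions in $A_\beta$, because $P_d$ is bounded on $A_\beta$ by Theorem~\ref{thm:multiplier}. So the identity also holds at $c$, where both sides vanish since they are odd. This gives $(I-R)P_d=I-R$ as operators.

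Multiplying on the left by $M_a$, more precisely composing with the bounded operator $P_a$, gives
\[
P_aP_d=M_a(I-R)P_d=M_a(I-R)=P_a.
\]
One point needs care: $M_a$ alone is not a bounded operator, because $a$ may be discontinuous at $c$. So I will phrase the step as applying the map $g\mapsto ag$ on $O$ from Theorem~\ref{thm:multiplier} to the function $(I-R)P_df=(I-R)f\in O$. Swapping the roles of $a$ and $d$ gives $P_dP_a=P_d$.

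Next, expand the square:
\[
(P_a-P_d)^2=P_a^2-P_aP_d-P_dP_a+P_d^2=P_a-P_a-P_d+P_d=0,
\]
using idempotence from Proposition~\ref{prop:projections}. A nonzero nilpotent $N$ with $N^2=0$ has spectrum $\{0\}$. For $\lambda\neq0$ the inverse of $N-\lambda I$ is $-\lambda^{-1}I-\lambda^{-2}N$. Also $0\in\sigma(N)$, because a nilpotent operator is never invertible on a nonzero space; if $N=0$ the spectrum is still $\{0\}$.

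The main obstacle I expect is the pointwise-to-operator passage at the singular point $c$. Only the composite $M_a(I-R)$ is well defined, so the factorization must be read through the multiplier map on $O$. The equality at $c$ follows from continuity, since every function involved lies in $A_\beta$ and the odd ones vanish at $c$. Everything else is routine algebra on two-point fibers.
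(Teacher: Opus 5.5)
Your proposal is correct and follows the paper's proof: the identity $(I-R)P_d=I-R$ from $d+Rd=1$ gives $P_aP_d=P_a$ and, symmetrically, $P_dP_a=P_d$; expanding the square gives $(P_a-P_d)^2=0$, and a square-zero operator has spectrum $\{0\}$. One notational slip: the display $(P_dg)(t)=d(t)g(t)$ should read $(P_df)(t)=d(t)g(t)$, since literally $P_dg=2M_dg$ for odd $g$; your next line already uses the correct quantity $M_dg=P_df$, so the argument is unaffected.
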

\begin{proof}
For every $f$, $(I-R)P_df=(d+Rd)(I-R)f=(I-R)f$. Hence $P_aP_d=P_a$, and the reverse identity follows similarly. Expanding the square proves nilpotence. Its spectrum is $\{0\}$.
\end{proof}

\begin{example}\label{ex:jump-projection}
There is a projection $P\in\mathscr C_\beta$ with $\ker P=E$ whose reflection coefficient has a jump at $c$, and $P\notin J_\beta$.
\end{example}
\begin{proof}
Let $a(t)=1$ for $t<c$ and $a(t)=0$ for $t>c$. Then $a+Ra=1$ and
\[
b=-ah=\begin{cases}|t-c|^\beta,&t<c,\\0,&t\ge c.\end{cases}
\]
Since $b\in AC\cap C^\beta$, Proposition~\ref{prop:projections} gives $P=T_{0,b}=M_a(I-R)$ with kernel $E$. For any $L\in\mathbb C$ and $r>0$,
\[
[b-Lh]_{\beta,[c-r,c+r]}\ge\max(|1+L|,|L|)\ge\tfrac12.
\]
Thus $\Omega_\beta(b)\ge1/2$, and Theorem~\ref{thm:distance} yields $P\notin J_\beta$.
\end{proof}

\begin{example}\label{ex:oscillatory-projection}
There is a projection $P\in\mathscr C_\beta$ with $\ker P=E$ whose reflection coefficient oscillates infinitely often near $c$, has no limit at $c$, and satisfies $P\notin J_\beta$.
\end{example}
\begin{proof}
For $t\ne c$, let
\[
a(t)=\frac12+\operatorname{sgn}(t-c)\sin\!\left(\log\frac1{|t-c|}\right).
\]
Then $a+Ra=1$. Put $b=-ah$ away from $c$ and $b(c)=0$. Thus
\[
b(t)=-\frac12h(t)-|t-c|^\beta\sin\!\left(\log\frac1{|t-c|}\right).
\]
For $G(x)=x^\beta\sin(\log(1/x))$, $G(0)=0$, one has
\[
|G'(x)|\le(\beta+1)x^{\beta-1},\qquad
|G(y)-G(x)|\le\frac{\beta+1}{\beta}|y-x|^\beta.
\]
Hence $G$ is absolutely continuous and $\beta$-H\"older; so is $G(|t-c|)$. Lemma~\ref{lem:ac} implies $b\in A_{\beta,0}$. Proposition~\ref{prop:projections} supplies the projection. The sine term shows that $a$ has no limit at $c$. For any $L\in\mathbb C$,
\[
[b-Lh]_{\beta,[c-r,c+r]}\ge\sup_{0<x\le r}|-\tfrac12-L-\sin(\log(1/x))|
\ge\max(|L+\tfrac32|,|L-\tfrac12|)\ge1.
\]
Thus $\Omega_\beta(b)\ge1$ and $P\notin J_\beta$ by Theorem~\ref{thm:distance}.
\end{proof}

\section{Invertibility and spectrum}
For $T=M_p+M_qR$, $p,q\in A_\beta$, put $\Delta_T=p(Rp)-q(Rq)\in E$.
\begin{theorem}\label{thm:invertibility}
The operator $T=M_p+M_qR$ is invertible on $A_\beta$ if and only if $\Delta_T$ has no zeros on $[0,1]$. In that case,
\[
T^{-1}=M_{\frac{Rp}{\Delta_T}}-M_{\frac q{\Delta_T}}R.
\]
\end{theorem}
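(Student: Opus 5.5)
The argument runs in two directions: a direct algebraic verification of the inverse formula when $\Delta_T$ has no zeros, and a fiberwise obstruction argument when it does.

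\emph{Sufficiency.} Suppose $\Delta_T$ has no zeros. Lemma~\ref{lem:composition-reciprocal} gives $1/\Delta_T\in A_\beta$. Proposition~\ref{prop:banach} then places $Rp/\Delta_T$ and $q/\Delta_T$ in $A_\beta$, so the candidate $S=M_{Rp/\Delta_T}-M_{q/\Delta_T}R$ is a bounded operator on $A_\beta$. To check that it is an inverse, we use the commutation rule $RM_g=M_{Rg}R$ and $R^2=I$. We also note that $R\Delta_T=\Delta_T$, i.e.\ $\Delta_T\in E$. Then
\[
TS=M_{p\,Rp/\Delta_T}-M_{pq/\Delta_T}R+M_{q\,R(Rp/\Delta_T)}R-M_{q\,R(q/\Delta_T)}
=M_{(p\,Rp-q\,Rq)/\Delta_T}+M_{(-pq+qp)/\Delta_T}R=I.
\]
The same calculation gives $ST=M_{(Rp\,p-q\,Rq)/\Delta_T}+M_{(Rp\,q-q\,Rp)/\Delta_T}R=I$. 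This step is routine.

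\emph{Necessity.} Suppose $\Delta_T(t_0)=0$. Because $\Delta_T$ is symmetric, its zero set is invariant under $t\mapsto1-t$. The plan is to show that $T$ is not bounded below, or, failing that, not surjective; the latter is easier. The fiberwise form of $T$ is $(Tf)(t)=p(t)f(t)+q(t)f(1-t)$, so on the pair $\{t_0,1-t_0\}$ it acts by the matrix
\[
\begin{pmatrix}p(t_0)&q(t_0)\\ q(1-t_0)&p(1-t_0)\end{pmatrix},
\]
whose determinant is $\Delta_T(t_0)$. The argument splits into two cases.

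If $t_0\ne c$, the matrix is singular, so some nonzero vector $(y_1,y_2)$ lies outside its range. By Corollary~\ref{thm:two-point}, two-point evaluation is onto $\mathbb C^2$, so there is $g\in A_\beta$ with $(g(t_0),g(1-t_0))=(y_1,y_2)$. The pair $(Tf(t_0),Tf(1-t_0))$ always lies in the range of the matrix, so $g\notin\operatorname{ran}T$ and $T$ is not surjective.

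If $t_0=c$, then $\Delta_T(c)=p(c)^2-q(c)^2=0$, and every value satisfies $(Tf)(c)=(p(c)+q(c))f(c)$. If $p(c)+q(c)=0$, then $1\notin\operatorname{ran}T$ and we are done. Otherwise $p(c)=q(c)\ne0$. In this case we apply $T$ to odd functions: for $g\in O$ one has $Tg=(p-q)g$, and $p-q$ vanishes at $c$. Taking odd bumps $g_r$ of height $r^\beta$ concentrated near $c$, in the spirit of the proof of Proposition~\ref{prop:projections}, gives $\|g_r\|_\beta\gtrsim1$ while $\|(p-q)g_r\|_\beta\to0$. Hence $T$ is not bounded below and is not invertible.

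The main obstacle is this last estimate, because the bumps sit exactly at the zero of $p-q$ and $c$ is a fixed point of the reflection. I would take $g_r=h\,\chi_r$ with a Lipschitz cutoff $\chi_r$ at scale $r$, mimicking the cutoff estimate in Theorem~\ref{thm:distance}. This gives $[(p-q)g_r]_\beta\lesssim[p-q]_{\beta,[c-2r,c+2r]}+\dots$, which need not tend to $0$.

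If that fails, I would replace it with a surjectivity argument at $c$. The goal is to show that $h\notin\operatorname{ran}T$, or that some odd function of the form $h\cdot(\text{non-vanishing})$ is not in the range. The point is that $Tf=g$ with $g$ odd forces the odd part of $f$ to equal $g/(p-q)$ modulo even corrections. Lemma~\ref{lem:multiplier-bound} and Theorem~\ref{thm:multiplier} control exactly when such a quotient lies in $A_\beta$. For instance, if $p-q$ vanishes at $c$, a solution would have to be $h/(p-q)$, and the multiplier criterion (ii) of Theorem~\ref{thm:multiplier} fails for it because this quotient does not extend to a function vanishing at $c$. I expect this division step at the fixed point $c$ to be the delicate part.
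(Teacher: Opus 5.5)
Your sufficiency computation, the case $t_0\neq c$, and the subcase $p(c)+q(c)=0$ at $c$ are correct. The first two are exactly the paper's argument.

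The gap is the remaining subcase $\Delta_T(c)=0$, $p(c)=q(c)\neq0$, which your proposal leaves open. The bound you write there, $[(p-q)g_r]_\beta\lesssim[p-q]_{\beta,[c-2r,c+2r]}+\cdots$, is the cutoff estimate for $\chi_r(p-q)$ from the proof of Theorem~\ref{thm:distance}. It indeed need not tend to zero, but it is not the relevant estimate. Your fallback does not work either. $T$ does not preserve $A_\beta=E\oplus O$ unless $p,q\in E$: for $f\in E$ one has $Tf=(p+q)f$, which is generally not even. So nothing forces a preimage of $h$ to have odd part $h/(p-q)$. Moreover, showing that one particular candidate fails Theorem~\ref{thm:multiplier} would not exclude other preimages.

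Your first idea does work once the product estimate is done correctly. Put $k=p-q$, so $k(c)=0$ and $\sup_{|t-c|\le2r}|k(t)|\le[k]_\beta(2r)^\beta$. Take $g_r=h\chi_r$ with $\chi_r(t)=\chi(|t-c|/r)$ as in Lemma~\ref{lem:uniform-localization}. Then $g_r\in O$, $\|g_r\|_\infty\le(2r)^\beta$, $[g_r]_\beta\le C_\beta$, and $[g_r]_\beta\ge|g_r(c+r)-g_r(c)|/r^\beta=1$. Write $k(s)g_r(s)-k(t)g_r(t)=k(s)\bigl(g_r(s)-g_r(t)\bigr)+g_r(t)\bigl(k(s)-k(t)\bigr)$ with $s\in\operatorname{supp}g_r$. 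This gives
\[
[kg_r]_\beta\le\sup_{\operatorname{supp}g_r}|k|\,[g_r]_\beta+\|g_r\|_\infty[k]_\beta=O(r^\beta),
\]
because the seminorm of $k$ is multiplied by $\|g_r\|_\infty=O(r^\beta)$, not by a constant. Hence $\|Tg_r\|_\beta=\|kg_r\|_\beta\to0$ while $\|g_r\|_\beta\ge1$, so $T$ is not bounded below and cannot be invertible.

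The paper avoids any estimate at the fixed point. Your surjectivity argument, applied to $T-\lambda I$, shows $\sigma(A_t(T))\subseteq\sigma(T)$ for $t<c$. Since $\det A_t(T)=\Delta_T(t)\to\Delta_T(c)=0$ as $t\uparrow c$, some eigenvalue satisfies $|\lambda_t|\le|\Delta_T(t)|^{1/2}\to0$. Closedness of $\sigma(T)$ then gives $0\in\sigma(T)$, which handles both subcases at $c$ at once.
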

\begin{proof}
If $\Delta_T$ has no zeros, its reciprocal belongs to $A_\beta$ by the Banach-algebra inverse-closedness under smooth composition. Direct multiplication gives
\[
T(M_{Rp}-M_qR)=(M_{Rp}-M_qR)T=M_{\Delta_T}.
\]
Since $\Delta_T$ is symmetric, the displayed inverse formula follows.

Conversely, for $t<c$ let $E_tf=(f(t),f(1-t))^\top$. This map is surjective and intertwines $T$ with
\[
A_t(T)=\begin{pmatrix}p(t)&q(t)\\q(1-t)&p(1-t)\end{pmatrix}.
\]
If $T$ is onto, the matrix $A_t(T)$ is onto, hence invertible, so $\Delta_T(t)\ne0$. To treat $c$, note that every eigenvalue of $A_t(T)$ belongs to $\sigma(T)$: otherwise the invertible operator $T-\lambda I$ would induce a surjective $A_t(T)-\lambda I$. If $\Delta_T(c)=0$, continuity of the characteristic polynomials yields eigenvalues $\lambda_n\in\sigma(A_{t_n}(T))$ with $t_n\uparrow c$ and $\lambda_n\to0$. Closedness of $\sigma(T)$ would imply $0\in\sigma(T)$, a contradiction. Symmetry of $\Delta_T$ completes the proof.
\end{proof}

\begin{theorem}\label{thm:spectrum}
For $T=M_p+M_qR$ with $p,q\in A_\beta$,
\[
\sigma(T)=\bigcup_{0\le t\le c}\sigma\!\left(\begin{pmatrix}p(t)&q(t)\\q(1-t)&p(1-t)\end{pmatrix}\right).
\]
\end{theorem}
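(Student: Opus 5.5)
The plan is to reduce the spectral statement to the invertibility criterion of Theorem~\ref{thm:invertibility}, applied to the shifted operator $T-\lambda I$. Since $T-\lambda I=M_{p-\lambda}+M_qR$ is again a regular multiplication--reflection operator with coefficients in $A_\beta$, that theorem shows that $\lambda\notin\sigma(T)$ if and only if
\[
\Delta_{T-\lambda I}(t)=(p(t)-\lambda)(p(1-t)-\lambda)-q(t)q(1-t)
\]
has no zeros on $[0,1]$. By the symmetry $\Delta_{T-\lambda I}\in E$, it suffices to examine $t\in[0,c]$.

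The next step is to identify $\Delta_{T-\lambda I}(t)$ with a characteristic polynomial. For each $t\in[0,c]$, $\Delta_{T-\lambda I}(t)=\det(A_t(T)-\lambda I)$, where
\[
A_t(T)=\begin{pmatrix}p(t)&q(t)\\q(1-t)&p(1-t)\end{pmatrix}.
\]
This holds for $t<c$ directly. At $t=c$ the matrix is $\begin{psmallmatrix}p(c)&q(c)\\q(c)&p(c)\end{psmallmatrix}$, and $\Delta_{T-\lambda I}(c)=(p(c)-\lambda)^2-q(c)^2$ is still its determinant. This is consistent with the action of $T$ at $c$, where $(Tf)(c)=(p(c)+q(c))f(c)$: the eigenvalues of this matrix are $p(c)\pm q(c)$, and the value $p(c)-q(c)$ must still be included, as the limit argument in Theorem~\ref{thm:invertibility} shows. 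Hence $\lambda\in\sigma(T)$ exactly when $\det(A_t(T)-\lambda I)=0$ for some $t\in[0,c]$, that is, when $\lambda\in\sigma(A_t(T))$ for some $t$. This gives the stated union.

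The only subtle point is the endpoint $t=c$, and it is already handled in the proof of Theorem~\ref{thm:invertibility}: the zero set of $\Delta$ is checked on all of $[0,1]$, including $c$, through the approximating eigenvalues $\lambda_n\in\sigma(A_{t_n}(T))$ with $t_n\uparrow c$. So nothing new is required beyond verifying that the shift by $\lambda$ keeps $p-\lambda\in A_\beta$, which is immediate since $1\in A_\beta$.

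As a consistency check, the union is automatically closed: the eigenvalues of $A_t(T)$ depend continuously on $t$ as roots of a monic quadratic with continuous coefficients, and $[0,c]$ is compact.
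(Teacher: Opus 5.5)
Your proposal is correct and follows the paper's own proof. You apply Theorem~\ref{thm:invertibility} to $T-\lambda I=M_{p-\lambda}+M_qR$, identify $\Delta_{T-\lambda I}(t)$ with $\det(A_t(T)-\lambda I)$ for every $t\in[0,c]$ including the endpoint, and use the symmetry of $\Delta_{T-\lambda I}$ to restrict from $[0,1]$ to $[0,c]$.
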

\begin{proof}
For every $\lambda\in\mathbb C$,
\[
\Delta_{T-\lambda I}(t)=(p(t)-\lambda)(p(1-t)-\lambda)-q(t)q(1-t)
=\det(A_t(T)-\lambda I).
\]
Apply Theorem~\ref{thm:invertibility} to $T-\lambda I$.
\end{proof}

\begin{corollary}\label{cor:symmetric-spectrum}
Let $p,q\in E$ and $T=M_p+M_qR$. Then
\[
\sigma(T)=(p+q)([0,1])\cup(p-q)([0,1]).
\]
Moreover, $T$ is invertible if and only if both $p+q$ and $p-q$ have no zeros on $[0,1]$.
\end{corollary}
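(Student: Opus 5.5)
The plan is to apply Theorem~\ref{thm:spectrum} directly and then compute the eigenvalues of the fiber matrices explicitly using the symmetry of $p$ and $q$. Since $p,q\in E$, we have $p(1-t)=p(t)$ and $q(1-t)=q(t)$. The fiber matrix at $t\in[0,c]$ is therefore
\[
A_t(T)=\begin{pmatrix}p(t)&q(t)\\ q(t)&p(t)\end{pmatrix}.
\]
This symmetric circulant matrix has eigenvectors $(1,1)^\top$ and $(1,-1)^\top$, with eigenvalues $p(t)+q(t)$ and $p(t)-q(t)$. Theorem~\ref{thm:spectrum} then gives
\[
\sigma(T)=(p+q)([0,c])\cup(p-q)([0,c]).
\]
It remains to replace $[0,c]$ by $[0,1]$. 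This follows because $p\pm q\in E$, so $(p\pm q)(t)=(p\pm q)(1-t)$. Hence $(p\pm q)([c,1])=(p\pm q)([0,c])$.

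For the invertibility statement, use Theorem~\ref{thm:invertibility}. There
\[
\Delta_T=p\,Rp-q\,Rq=p^2-q^2=(p+q)(p-q),
\]
so $\Delta_T$ has no zeros on $[0,1]$ exactly when neither $p+q$ nor $p-q$ vanishes anywhere. Equivalently, $0\notin\sigma(T)$ by the first part.

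No step is a genuine obstacle, since the work was done in Theorems~\ref{thm:invertibility} and~\ref{thm:spectrum}. The only points needing care are the following. The point $t=c$ is included in the union over $[0,c]$ in Theorem~\ref{thm:spectrum}, and there the matrix is still the same circulant, so no special treatment is required. The passage from the half-interval to the full interval uses the symmetry of $p\pm q$.
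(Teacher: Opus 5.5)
Your proposal is correct and follows the paper's own proof essentially step for step. You identify the symmetric fiber matrix with eigenvalues $p\pm q$ and apply Theorem~\ref{thm:spectrum}, use symmetry of $p\pm q$ to pass from $[0,c]$ to $[0,1]$, and obtain the invertibility criterion from $\Delta_T=(p+q)(p-q)$ via Theorem~\ref{thm:invertibility}.
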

\begin{proof}
Since $Rp=p$ and $Rq=q$, the matrix
\[
A_t(T)=\begin{pmatrix}p(t)&q(t)\\q(t)&p(t)\end{pmatrix}
\]
has eigenvalues $p(t)+q(t)$ and $p(t)-q(t)$. These functions are symmetric, so their ranges over $[0,1/2]$ equal their ranges over $[0,1]$. The spectral formula follows from Theorem~\ref{thm:spectrum}. Also, $\Delta_T=p^2-q^2=(p+q)(p-q)$, and the invertibility assertion follows from Theorem~\ref{thm:invertibility}.
\end{proof}

\begin{example}\label{ex:spectral-gap}
For $\kappa\ge0$, consider $T_\kappa=M_t+\kappa R$, where $M_t$ denotes multiplication by the identity function. Then
\[
\sigma(T_\kappa)=
\left[\frac12-\sqrt{\kappa^2+\frac14},\frac12-\kappa\right]
\cup
\left[\frac12+\kappa,\frac12+\sqrt{\kappa^2+\frac14}\right].
\]
In particular, $T_\kappa$ is invertible if and only if $\kappa>1/2$.
\end{example}
\begin{proof}
For $0\le t\le1/2$, the associated matrix is
\[
A_t(T_\kappa)=\begin{pmatrix}t&\kappa\\\kappa&1-t\end{pmatrix},
\]
with eigenvalues
\[
\lambda_\pm(t)=\frac12\pm\sqrt{(t-\frac12)^2+\kappa^2}.
\]
As $t$ ranges over $[0,1/2]$, the square root ranges over $[\kappa,\sqrt{\kappa^2+1/4}]$. Theorem~\ref{thm:spectrum} yields the asserted spectrum. Finally,
\[
\Delta_{T_\kappa}(t)=t(1-t)-\kappa^2.
\]
Since $t(1-t)$ ranges over $[0,1/4]$, Theorem~\ref{thm:invertibility} shows that $T_\kappa$ is invertible exactly when $\kappa>1/2$.
\end{proof}

\begin{lemma}\label{lem:weighted-quotient}
Let $b,g,d\in A_\beta$ vanish at $c$, and suppose that, for some
$B,G,\delta_0>0$ and all $t\ne c$,
\[
 |b(t)|\le B|h(t)|,\qquad |g(t)|\le G|h(t)|,
 \qquad |d(t)|\ge\delta_0|h(t)|.
\]
Then the function defined by $F(t)=b(t)g(t)/d(t)$ for $t\ne c$ and
$F(c)=0$ belongs to $A_\beta$.
\end{lemma}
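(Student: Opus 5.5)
The plan is to follow the proof of Lemma~\ref{lem:multiplier-bound} closely, with $d$ in place of $h$. The key observation is that the sign of $h$ was used there only to control $1/h_y-1/h_x$. Here that term can instead be controlled by the increment of $d$ itself, using the lower bound $|d|\ge\delta_0|h|$. First, the hypotheses give the pointwise bound $|F(t)|\le (BG/\delta_0)|t-c|^\beta$ for $t\ne c$. So $F$ is continuous at $c$ with $F(c)=0$. Since $d$ has no zeros away from $c$, $F$ is continuous on $[0,1]$.

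Next, I take $s,t$ on the same side of $c$ and label them so that the point at distance $x=\min(|s-c|,|t-c|)$ carries subscript $x$ and the other, at distance $y\ge x$, carries subscript $y$. If $x=0$, the pointwise bound finishes this case. If $x>0$, I use the decomposition
\[
F_y-F_x=\frac{g_y}{d_y}(b_y-b_x)+\frac{b_x}{d_y}(g_y-g_x)+\frac{b_xg_x}{d_xd_y}(d_x-d_y).
\]
The coefficients are bounded as follows:
\begin{itemize}
\item $|g_y/d_y|\le G/\delta_0$;
\item $|b_x/d_y|\le Bx^\beta/(\delta_0y^\beta)\le B/\delta_0$, since $y\ge x$;
\item $|b_xg_x/(d_xd_y)|\le BGx^{2\beta}/(\delta_0^2x^\beta y^\beta)\le BG/\delta_0^2$.
\end{itemize}
Hence
\[
|F_y-F_x|\le \tfrac{G}{\delta_0}|b_y-b_x|+\tfrac{B}{\delta_0}|g_y-g_x|+\tfrac{BG}{\delta_0^2}|d_y-d_x|.
\]
This gives the same-side H\"older estimate with constant $(G[b]_\beta+B[g]_\beta)/\delta_0+BG[d]_\beta/\delta_0^2$. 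Unlike the case of Lemma~\ref{lem:multiplier-bound}, no $y^\beta-x^\beta$ term appears. For points on opposite sides, I combine the pointwise bound with $|t-c|^\beta+|s-c|^\beta\le2^{1-\beta}|t-s|^\beta$. This shows $F\in C^\beta$.

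For the restricted-variation condition, I take an admissible family, split any interval crossing $c$ at $c$, and note that the new family is admissible with budgets $\delta$ and $2^{1-\beta}M$. Each split interval lies on one side of $c$, possibly with an endpoint at $c$. For such intervals, the three-term inequality holds (when an endpoint is at $c$, the $x=0$ case also gives a bound by $|b_y|$-type increments, since $b(c)=g(c)=d(c)=0$; alternatively the inequality still holds there by continuity). Summing over the family then gives
\[
\sum_j|F(b_j)-F(a_j)|\le \tfrac{G}{\delta_0}V_{\beta,\delta,2^{1-\beta}M}(b)+\tfrac{B}{\delta_0}V_{\beta,\delta,2^{1-\beta}M}(g)+\tfrac{BG}{\delta_0^2}V_{\beta,\delta,2^{1-\beta}M}(d),
\]
and each term tends to $0$ as $\delta\downarrow0$ because $b,g,d\in A_\beta$.

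The only delicate point I expect is the choice of which endpoint plays the role of $x$, so that every coefficient stays bounded near $c$. The labeling by distance to $c$ described above handles this. I also need to check the endpoint-at-$c$ case: there the third coefficient degenerates. For that case I will either use continuity of the three-term inequality as $x\downarrow0$ or apply the direct estimate $|F_y|\le (G/\delta_0)|b_y-b(c)|$, which is again controlled by the increments of $b$.
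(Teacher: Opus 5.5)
Your proof is correct and is essentially the paper's argument. Both use the same three-term decomposition, with the point closer to $c$ placed in the numerators, the same coefficient bounds, and summation of the resulting increment inequality over admissible families. The paper also notes that your coefficient bounds use only $|s-c|\le|t-c|$, not that $s,t$ lie on the same side of $c$. So the three-term inequality holds for every pair $s,t\ne c$, and neither your separate opposite-side case nor the splitting of intervals at $c$ (with the enlarged budget $2^{1-\beta}M$) is needed.
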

\begin{proof}
The bounds give $|F(t)|\le BG|h(t)|/\delta_0$, so $F$ is continuous
at $c$. For $s,t\ne c$, interchange the points if necessary so that
$|h(s)|\le|h(t)|$. The identity
\[
\begin{aligned}
 F(t)-F(s)={}&\frac{g(t)}{d(t)}(b(t)-b(s))
 +\frac{b(s)}{d(t)}(g(t)-g(s))\\
 &+\frac{b(s)g(s)}{d(t)d(s)}(d(s)-d(t))
\end{aligned}
\]
and the hypotheses imply
\begin{equation}\label{eq:weighted-quotient-increments}
 |F(t)-F(s)|\le\frac G{\delta_0}|b(t)-b(s)|
 +\frac B{\delta_0}|g(t)-g(s)|
 +\frac{BG}{\delta_0^2}|d(t)-d(s)|.
\end{equation}
The same bound holds when one point is $c$, by continuity. It gives
the $\beta$-H\"older estimate. Summing
\eqref{eq:weighted-quotient-increments} over each family of intervals
admissible in the definition of $V_{\beta,\varepsilon,M}$ shows that
\[
\begin{aligned}
 V_{\beta,\varepsilon,M}(F)\le{}&
 \frac G{\delta_0}V_{\beta,\varepsilon,M}(b)
 +\frac B{\delta_0}V_{\beta,\varepsilon,M}(g)\\
 &+\frac{BG}{\delta_0^2}V_{\beta,\varepsilon,M}(d)
&+\frac{BG}{\delta_0^2}V_{\beta,\varepsilon,M}(d)
 \longrightarrow0\qquad(\varepsilon\downarrow0)
\end{aligned}
\]
for every $M>0$. Therefore $F\in A_\beta$.
\end{proof}

\begin{lemma}\label{lem:localized-spectral-test}
Let $J\subset\mathbb R$ be a compact interval, let $y_0$ be an
interior point, and let $H$ be an $m\times m$ matrix field whose
entries are $\beta$-H\"older on a neighborhood of $y_0$.
Suppose $H(y_0)\xi=0$ for some $\xi\in\mathbb C^m\setminus\{0\}$.
For a nonzero $\eta\in C_c^\infty((-1,1))$ and sufficiently small
$r>0$, put
\[
 F_r(y)=r^\beta\eta\!\left(\frac{y-y_0}{r}\right)\xi.
\]
Then there are constants $c,C>0$, independent of $r$, such that
\[
 c\le\|F_r\|_{C^\beta(J;\mathbb C^m)}\le C,\qquad
 \|F_r\|_\infty\le Cr^\beta,\qquad
 \|HF_r\|_{C^\beta(J;\mathbb C^m)}\le Cr^\beta,
\]
where $HF_r$ is extended by zero away from the support of $F_r$.
All the components of $F_r$ belong to $A_\beta(J)$.
\end{lemma}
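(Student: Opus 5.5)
The plan is to treat the three estimates separately and to use only scaling and the product estimate for Hölder functions. Throughout, $r$ is small enough that $[y_0-r,y_0+r]\subset J$ and that $H$ is $\beta$-Hölder on this interval with seminorm $K$. All components of $F_r$ are multiples of the smooth compactly supported function $\eta_r(y)=r^\beta\eta((y-y_0)/r)$. Such a function is Lipschitz and absolutely continuous on $J$, so it lies in $A_\beta(J)$ by Lemma~\ref{lem:ac}. This settles the last assertion.

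For $F_r$ itself, first note $\|F_r\|_\infty=r^\beta\|\eta\|_\infty|\xi|$. The upper Hölder bound uses the standard scaling estimate
\[
|\eta_r(y)-\eta_r(z)|\le r^\beta\min\{\|\eta'\|_\infty|y-z|/r,\;2\|\eta\|_\infty\}\le C_\eta|y-z|^\beta .
\]
The first branch of the minimum covers $|y-z|\le r$ and the second covers $|y-z|>r$. For the lower bound, pick $s_0\in(-1,1)$ with $\eta(s_0)\ne0$ and compare $y=y_0+rs_0$ with $z=y_0+r$, where $\eta$ vanishes. This gives
\[
[F_r]_\beta\ge |\eta(s_0)||\xi|\,r^\beta/(r(1-s_0))^\beta=|\eta(s_0)||\xi|(1-s_0)^{-\beta},
\]
which is a positive constant independent of $r$. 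Hence $c\le\|F_r\|\le C$.

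The main step is the bound for $HF_r$, and it rests on the vanishing $H(y_0)\xi=0$. Write $HF_r=(H-H(y_0))F_r$ on $U_r=[y_0-r,y_0+r]$. On $U_r$ the matrix factor satisfies
\[
\|H-H(y_0)\|_{\infty,U_r}\le Kr^\beta,\qquad [H-H(y_0)]_{\beta,U_r}\le K .
\]
For $y,z\in U_r$ the product estimate then gives
\[
|HF_r(y)-HF_r(z)|\le \|H-H(y_0)\|_{\infty,U_r}[F_r]_\beta|y-z|^\beta+[H]_{\beta,U_r}\|F_r\|_\infty|y-z|^\beta\le Cr^\beta|y-z|^\beta .
\]
If exactly one of the points lies outside $U_r$, replace it by the endpoint of $U_r$ lying between them, where $F_r$ vanishes. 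If both lie outside $U_r$, the increment is zero. The supremum norm is at most $Kr^\beta\cdot Cr^\beta$.

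The only point needing care is the extension by zero off $U_r$. This is where the boundary comparison is used, exactly as in Lemma~\ref{lem:bumps}. Apart from that comparison, the argument is routine scaling.
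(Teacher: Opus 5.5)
Your proof is correct and takes the same route as the paper. You use scaling of $r^\beta\eta((\cdot-y_0)/r)$ for the two-sided bounds on $F_r$, the identity $HF_r=(H-H(y_0))F_r$ together with the H\"older product estimate on $[y_0-r,y_0+r]$ to get $\|HF_r\|_{C^\beta}=O(r^\beta)$, and the vanishing of $\eta$ at $\pm1$ to control the zero extension; your explicit two-point lower bound $[F_r]_\beta\ge|\eta(s_0)||\xi|(1-s_0)^{-\beta}$ is a slightly more careful version of the paper's claim that direct scaling gives $[F_r]_\beta=[\eta]_\beta\|\xi\|$.
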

\begin{proof}
On $|y-y_0|\le r$, let $q(y)=H(y)\xi$. Since $q(y_0)=0$,
\[
 \|q\|_{\infty,[y_0-r,y_0+r]}\le C_1r^\beta\|\xi\|,
 \qquad [q]_{\beta,[y_0-r,y_0+r]}\le C_1\|\xi\|.
\]
The $\beta$-H\"older seminorm of
$r^\beta\eta((\cdot-y_0)/r)$ is $[\eta]_\beta$,
independent of $r$, and its supremum norm is at most
$r^\beta\|\eta\|_\infty$. The H\"older product estimate gives
\[
 \|HF_r\|_\infty\le C_2r^{2\beta},\qquad
 [HF_r]_\beta\le C_3r^\beta.
\]
The extension by zero obeys the same bounds because $\eta$ is
compactly supported inside $(-1,1)$. Direct scaling gives
$[F_r]_\beta=[\eta]_\beta\|\xi\|>0$, and also its uniform
upper bound. The functions $F_r$ are smooth and compactly supported
in the interior of $J$, so they belong to $A_\beta(J)$.
\end{proof}

For $T=T_{v,b}\in\mathscr C_\beta$ put $a=b/h$ on
$[0,1]\setminus\{c\}$. For $0\le t<c$ define its two-point matrix
\begin{equation}\label{eq:singular-fiber-matrix}
 B_t(T)=
 \begin{pmatrix}
 v(t)-a(t)&a(t)\\
 a(1-t)&v(1-t)-a(1-t)
 \end{pmatrix}.
\end{equation}
Unlike the regular matrices of Theorem~\ref{thm:spectrum}, these
matrices need not have a limit as $t\uparrow c$.

\begin{theorem}\label{thm:full-essential-spectrum}
For every $T=T_{v,b}\in\mathscr C_\beta$,
\begin{equation}\label{eq:full-essential-spectrum}
 \boxed{\quad
 \sigma_{\mathrm{ap}}(T)=\sigma_{\mathrm{usf}}(T)
 =\sigma_{\mathrm{ess}}(T)=\sigma(T)
 =\overline{\bigcup_{0\le t<c}\sigma(B_t(T))}.\quad}
\end{equation}
Moreover, $\mathscr C_\beta$ is inverse-closed in
$\mathcal B(A_\beta)$: whenever $T$ is invertible on $A_\beta$,
its inverse also belongs to $\mathscr C_\beta$.
\end{theorem}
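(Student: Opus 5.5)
Write $\Sigma=\overline{\bigcup_{0\le t<c}\sigma(B_t(T))}$. The inclusions $\sigma_{\mathrm{usf}}(T)\subseteq\sigma_{\mathrm{ess}}(T)\subseteq\sigma(T)$ and $\sigma_{\mathrm{usf}}(T)\subseteq\sigma_{\mathrm{ap}}(T)\subseteq\sigma(T)$ hold for every bounded operator. It therefore suffices to prove two things:
\[
\Sigma\subseteq\sigma_{\mathrm{usf}}(T)\cap\sigma_{\mathrm{ess}}(T)\qquad\text{and}\qquad\sigma(T)\subseteq\Sigma .
\]
Inverse-closedness will come out of the second inclusion, since the inverse we build will visibly lie in $\mathscr C_\beta$. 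The second inclusion is the step I expect to be the main obstacle.

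\emph{Lower bound.} Since $\sigma_{\mathrm{usf}}$ and $\sigma_{\mathrm{ess}}$ are closed, it is enough to take $\lambda\in\sigma(B_{t_0}(T))$ with $t_0<c$.

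For $\sigma_{\mathrm{usf}}$, choose $\xi\ne0$ with $(B_{t_0}(T)-\lambda)\xi=0$. On a small neighbourhood of the pair $\{t_0,1-t_0\}$, which stays away from $c$, the coefficients $v$ and $a=b/h$ are $\beta$-H\"older. There $T-\lambda$ acts as the $2\times2$ matrix field $y\mapsto B_y(T)-\lambda$ on $(f(y),f(1-y))$.

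I would apply Lemma~\ref{lem:localized-spectral-test} with $H=B_\cdot(T)-\lambda$ and $y_0=t_0$. This gives functions $f_r$ supported near $t_0$ and $1-t_0$ with $\|f_r\|_\beta\ge c$, $\|f_r\|_\infty\to0$ and $\|(T-\lambda)f_r\|_\beta\to0$. By Arzel\`a--Ascoli, every subsequence has no norm-convergent subsequence. This is a singular Weyl sequence, so $T-\lambda$ is not upper semi-Fredholm.

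For $\sigma_{\mathrm{ess}}$, work with the transpose $\eta^{\!\top}$ of a left null vector. The two-point evaluations $E_t$ are surjective and intertwine $T$ with $B_t(T)$. Hence for every $t$ near $t_0$ whose matrix stays singular, the functional $\eta_t^{\!\top}E_t$ annihilates $\operatorname{ran}(T-\lambda)$. If the eigenvalue of $B_t$ near $t_0$ is only locally equal to $\lambda$ at isolated points, I would instead rely on upper-semi-Fredholm failure, which already yields non-Fredholmness.

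So $\Sigma\subseteq\sigma_{\mathrm{usf}}\subseteq\sigma_{\mathrm{ess}}$, and also $\Sigma\subseteq\sigma_{\mathrm{ap}}$.

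\emph{Upper bound and inverse-closedness.} Let $\lambda\notin\Sigma$ and replace $T$ by $T-\lambda=T_{v-\lambda,b}$. Then there is $\varepsilon>0$ such that $\sigma(B_t(T))$ stays at distance $\ge\varepsilon$ from $0$ for all $t<c$. The entries of $B_t$ are bounded: $|a|\le[b]_\beta$ because $b(c)=0$. Hence $|\det B_t|\ge\delta_1>0$ uniformly.

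I would write down the candidate inverse fibrewise as $B_t^{-1}=\operatorname{adj}B_t/\det B_t$. In the form $T_{w,e}$ its coefficients are
\[
e=-ah/\det B\ \text{(suitably symmetrised)},\qquad w=(\text{entries})/\det B .
\]
Concretely, put $D(t)=\det B_t$, which is symmetric. Then $-a/D$ is the reflection coefficient of the inverse, so $e=-bh\,/(hD)$ after multiplication by $h$. The diagonal $w$ is obtained from $v(1-t)-a(1-t)+a(t)$ over $D$, and similarly on the other side.

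The main difficulty is regularity at $c$, because $a$ and $D$ may oscillate there. The plan is to express everything through $b,\ Rb,\ v$ and $h$. For instance,
\[
hD=h\,(v-a)(Rv-Ra)-h\,a\,Ra ,
\]
and $a\,Ra\,h=b\,(Rb)/(-h)$. Each such product is in $A_\beta$ by Lemma~\ref{lem:multiplier-bound}, since $Rb\in O+E$ needs handling through its odd/even parts. I would then apply Lemma~\ref{lem:weighted-quotient} with $d=hD$ or $d=h^2D$, which satisfies $|d|\ge\delta_1|h|$, to conclude that
\[
w\in A_\beta,\qquad e\in A_{\beta,0},\qquad e=b\cdot(\text{bounded})/D .
\]
Near $c$ the Lipschitz composition of Lemma~\ref{lem:composition-reciprocal} cannot be used, because $D$ need not be continuous. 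That is exactly why I would pass to the $h$-weighted quotients.

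Away from $c$ the smooth reciprocal lemma suffices; a cutoff glues the two regimes together. Theorem~\ref{thm:algebra} then gives $T_{w,e}\in\mathscr C_\beta$, and the multiplication formula checked on fibres gives $T_{w,e}T=TT_{w,e}=I$ for $t\ne c$, hence everywhere by continuity. So $\lambda\notin\sigma(T)$, and the inverse lies in $\mathscr C_\beta$.
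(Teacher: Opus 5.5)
Your lower bound and your treatment of the reflection coefficient match the paper's proof. The lower bound uses localized bumps at $t_0$ and $1-t_0$ via Lemma~\ref{lem:localized-spectral-test}, giving $\Sigma\subseteq\sigma_{\mathrm{usf}}(T)\subseteq\sigma_{\mathrm{ap}}(T)\cap\sigma_{\mathrm{ess}}(T)$. The reflection coefficient $e=-bh/(hD)$ is obtained from Lemma~\ref{lem:weighted-quotient} applied to $b,h,hD$.

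The gap is the multiplication coefficient of the candidate inverse. With $v_\lambda=v-\lambda$ (your $v$ after the shift), this is the row sum $w=(Rv_\lambda-Ra-a)/D=(hRv_\lambda+Rb-b)/(hD)$. Lemma~\ref{lem:weighted-quotient} cannot give $w\in A_\beta$.
\begin{itemize}
\item The lemma treats quotients $bg/d$ whose \emph{two} numerator factors are $O(|h|)$, and its output vanishes at $c$. But $w(c)$ must equal $1/v_\lambda(c)\neq0$.
\item Your alternative $d=h^2D$ violates the hypothesis, since $|h^2D|\ge\delta_1|h|^2$, not $\delta_1|h|$.
\item A bare quotient $N/d$ with $|N|\le C|h|$ and $|d|\ge\delta|h|$ need not lie in $A_\beta$, or even be continuous at $c$. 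For example, take $N=h$ and $d=h(2+\sin\log\frac1{|t-c|})$.
\end{itemize}
Because $a$ and $D$ may oscillate at $c$, the regularity of $w$ comes from a cancellation your plan does not exhibit. So $T_{w,e}\in\mathscr C_\beta$ is not established, and with it neither $\sigma(T)\subseteq\Sigma$ nor inverse-closedness.

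The missing steps are these.
\begin{enumerate}
\item The $a\,Ra$ terms cancel in $hD$, giving
\[
 d_\lambda:=hD=h\,v_\lambda Rv_\lambda+v_\lambda Rb-b\,Rv_\lambda\in A_{\beta,0}.
\]
So no multiplier lemma is needed for the denominator.
\item $v_\lambda(c)\ne0$. Otherwise $|d_\lambda(t)|=O(|t-c|^{2\beta})=o(|h(t)|)$, contradicting $|d_\lambda|\ge\delta_1|h|$.
\item Near $c$ one has the identity
\[
 w=\frac1{v_\lambda}+\frac{b\,(Rv_\lambda-v_\lambda)}{v_\lambda\,d_\lambda}.
\]
Now Lemma~\ref{lem:weighted-quotient} does apply to $b$, $Rv_\lambda-v_\lambda$ and $d_\lambda$, because $Rv_\lambda-v_\lambda$ vanishes at $c$ and is $O(|h|)$. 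Also $1/v_\lambda$ is a local $A_\beta$ reciprocal. Away from $c$, your cutoff gluing with Lemma~\ref{lem:composition-reciprocal} finishes the argument.
\end{enumerate}

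There are also three minor points.
\begin{itemize}
\item The row sum carries $-a(t)$, not $+a(t)$.
\item For $t_0=0$, Lemma~\ref{lem:localized-spectral-test} needs an interior point. Use continuity of $B_t(T)$ at $t=0$ together with closedness of the spectra instead.
\item Your separate $\sigma_{\mathrm{ess}}$ argument is unnecessary, because $\sigma_{\mathrm{usf}}\subseteq\sigma_{\mathrm{ess}}$.
\end{itemize}
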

\begin{proof}
Set $\Sigma=\overline{\bigcup_{0\le t<c}\sigma(B_t(T))}$.
First let $t_0\in(0,c)$ and
$B_{t_0}(T)\xi=\lambda\xi$ for $\xi=(\xi_1,\xi_2)^\mathsf T\ne0$.
Choose a nonzero, even $\eta\in C_c^\infty((-1,1))$ and, for small
$r>0$, set
\[
 f_r(t)=r^\beta\left[
 \xi_1\eta\!\left(\frac{t-t_0}{r}\right)
 +\xi_2\eta\!\left(\frac{t-(1-t_0)}{r}\right)\right].
\]
The supports are disjoint and avoid $c$ and the endpoints. These
functions are smooth, hence belong to $A_\beta$. Their $\beta$-H\"older
norms are bounded above and away from zero, while
$\|f_r\|_\infty=O(r^\beta)$. The quantitative product bound
in Lemma~\ref{lem:localized-spectral-test} applies in the paired
coordinates $(f_r(t),f_r(1-t))$. Since the entries of $B_t(T)$ are
locally $\beta$-H\"older near $t_0$, the eigenvector relation and
the H\"older product estimate on the two supports give
\[
 \|(T-\lambda I)f_r\|_\beta=O(r^\beta).
\]
After normalizing and taking $r\downarrow0$, we obtain a sequence
of unit vectors $u_r$ such that $\|(T-\lambda I)u_r\|_\beta\to0$
and $\|u_r\|_\infty\to0$. In particular, $(u_r)$ has no
norm-convergent subsequence in $A_\beta$: a norm limit would have norm one and vanish identically. If $T-\lambda I$ had
closed range and finite-dimensional kernel, the open mapping theorem applied to the induced bijection $A_\beta/\ker(T-\lambda I)\to\operatorname{ran}(T-\lambda I)$
would give
\[
 \operatorname{dist}(u_r,\ker(T-\lambda I))\le C\|(T-\lambda I)u_r\|_\beta\to0.
\]
The boundedness of $(u_r)$ and finite-dimensionality of the kernel
would then give a norm-convergent subsequence, a contradiction.
Thus $\lambda\in\sigma_{\mathrm{usf}}(T)\cap\sigma_{\mathrm{ap}}(T)$.
Both of these spectral sets are closed, so passage to $t_0=0$
and to all limit points proves
$\Sigma\subseteq\sigma_{\mathrm{usf}}(T)\cap\sigma_{\mathrm{ap}}(T)$.

Conversely, let $\lambda\notin\Sigma$ and put
$v_\lambda=v-\lambda$. The matrices $B_t(T)-\lambda I$ have
uniformly bounded entries, since $b(c)=0$ and $b\in C^\beta$
imply $|b/h|\le[b]_\beta$. Since their eigenvalues are uniformly
separated from zero, their determinants are uniformly bounded
away from zero. Define
\[
 d_\lambda=h\,v_\lambda Rv_\lambda
 +v_\lambda Rb-bRv_\lambda\in A_{\beta,0}.
\]
Direct expansion gives, for $t<c$ (and by reflection for $t>c$),
\[
 \det(B_t(T)-\lambda I)=\frac{d_\lambda(t)}{h(t)}.
\]
Consequently, for some $\delta_\lambda>0$,
$|d_\lambda(t)|\ge\delta_\lambda|h(t)|$ for every $t\ne c$.
We also have $v_\lambda(c)\ne0$: otherwise the H\"older bounds
for $v_\lambda,b$ at $c$ would imply
$|d_\lambda(t)|=O(|t-c|^{2\beta})=o(|h(t)|)$.

For $t\ne c$ define
\[
 e_\lambda(t)=-\frac{b(t)h(t)}{d_\lambda(t)},
 \qquad
 w_\lambda(t)=\frac{h(t)Rv_\lambda(t)+Rb(t)-b(t)}{d_\lambda(t)},
\]
with $e_\lambda(c)=0$ and $w_\lambda(c)=1/v_\lambda(c)$.
Lemma~\ref{lem:weighted-quotient}, applied to $b,h,d_\lambda$,
gives $e_\lambda\in A_{\beta,0}$. To verify
$w_\lambda\in A_\beta$, choose an interval about $c$ on which
$v_\lambda$ is nonvanishing and use
\begin{equation}\label{eq:inverse-symbol-decomposition}
 w_\lambda=\frac1{v_\lambda}
 +\frac{b(Rv_\lambda-v_\lambda)}{v_\lambda d_\lambda}.
\end{equation}
The function $Rv_\lambda-v_\lambda$ lies in $A_{\beta,0}$ and
is bounded in modulus by a constant times $|h|$. Applying
Lemma~\ref{lem:weighted-quotient} to
$b,Rv_\lambda-v_\lambda,d_\lambda$ shows that the second
numerator quotient belongs to $A_\beta$; multiplication by the
local $A_\beta$ reciprocal of $v_\lambda$ preserves this property.
Away from $c$ the denominator $d_\lambda$ does not vanish, so
Lemma~\ref{lem:composition-reciprocal} applies. Smooth cutoffs
combine the local formulas to yield $w_\lambda\in A_\beta$.

The inverse of $B_t(T)-\lambda I$ has off-diagonal entries
$e_\lambda(t)/h(t)$ and $e_\lambda(1-t)/h(1-t)$, and its row sums
are $w_\lambda(t)$ and $w_\lambda(1-t)$. Hence the two-point
matrix of $T_{w_\lambda,e_\lambda}$ equals
$(B_t(T)-\lambda I)^{-1}$ for $t<c$. At $c$,
$T-\lambda I$ acts as multiplication by $v_\lambda(c)$, and
$T_{w_\lambda,e_\lambda}$ acts as multiplication by its reciprocal.
Thus the two operators are mutual inverses on $A_\beta$.
We have proved $\sigma(T)\subseteq\Sigma$. The standard inclusions
$\sigma_{\mathrm{usf}}(T)\subseteq\sigma_{\mathrm{ess}}(T)
\subseteq\sigma(T)$, together with
$\sigma_{\mathrm{ap}}(T)\subseteq\sigma(T)$, yield all the
claimed equalities. Applying the inverse construction with
$\lambda=0$ proves inverse-closedness.
\end{proof}

\begin{corollary}\label{cor:full-fredholm}
For $T\in\mathscr C_\beta$, the following conditions are equivalent:
$T$ is Fredholm; $T$ is upper semi-Fredholm; $T$ is invertible;
and $\det B_t(T)\ne0$ uniformly in the sense that
$\inf_{0\le t<c}|\det B_t(T)|>0$.
\end{corollary}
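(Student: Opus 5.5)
The plan is to read everything off Theorem~\ref{thm:full-essential-spectrum}, which gives $\sigma_{\mathrm{usf}}(T)=\sigma_{\mathrm{ess}}(T)=\sigma(T)=\Sigma$, where $\Sigma=\overline{\bigcup_{0\le t<c}\sigma(B_t(T))}$. Applied at $\lambda=0$, this already yields the equivalence of the first three conditions. If $T$ is Fredholm, then $0\notin\sigma_{\mathrm{ess}}(T)=\sigma(T)$, so $T$ is invertible. Invertible operators are Fredholm, and Fredholm operators are upper semi-Fredholm. Finally, if $T$ is upper semi-Fredholm, then $0\notin\sigma_{\mathrm{usf}}(T)=\sigma(T)$, so $T$ is invertible again. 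It therefore remains to show that invertibility, i.e.\ $0\notin\Sigma$, is equivalent to $\inf_{0\le t<c}|\det B_t(T)|>0$.

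Suppose first that $0\notin\Sigma$. Then there is $\rho>0$ such that every eigenvalue of every $B_t(T)$, $0\le t<c$, has modulus at least $\rho$. As noted in the proof of Theorem~\ref{thm:full-essential-spectrum}, the entries of $B_t(T)$ are uniformly bounded, because $|a|=|b/h|\le[b]_\beta$ and $v$ is bounded. For a $2\times2$ matrix the determinant is the product of the two eigenvalues, so $|\det B_t(T)|\ge\rho^2$ uniformly. This step does not even use the entry bound. An equivalent route is to note that $\det B_t(T)=d_0(t)/h(t)$ with $d_0$ as in that proof and $|d_0|\ge\delta_0|h|$.

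Conversely, suppose $|\det B_t(T)|\ge\delta>0$ for all $t<c$, and let $K$ bound all entries. If $\mu$ is an eigenvalue of $B_t(T)$ and $\mu'$ is the other one, then $|\mu'|\le\|B_t(T)\|\le 2K$. Hence $|\mu|=|\det B_t(T)|/|\mu'|\ge\delta/(2K)$. The union of the spectra therefore stays a fixed distance from $0$, so $0$ is not in its closure $\Sigma$, and $T$ is invertible by the theorem.

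The only step needing care is this uniform entry bound, which is what turns a determinant bound into an eigenvalue separation. It holds because $b\in A_{\beta,0}$ gives $|b(t)|\le[b]_\beta|t-c|^\beta=[b]_\beta|h(t)|$. Everything else is a direct consequence of the spectral equalities.
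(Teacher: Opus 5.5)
Your proof is correct and follows the paper's own argument. You get the equivalence of the three operator conditions from the spectral equalities of Theorem~\ref{thm:full-essential-spectrum} at $\lambda=0$, and you match the determinant condition to $0\notin\overline{\bigcup_{0\le t<c}\sigma(B_t(T))}$ using the uniform boundedness of the matrices $B_t(T)$, exactly as the paper does; your bound $|\mu|\ge\delta/(2K)$ just spells out the $2\times2$ step that the paper states without detail.
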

\begin{proof}
The spectral equalities in Theorem~\ref{thm:full-essential-spectrum}
show that all three operator properties are equivalent.
The determinant condition is equivalent to
$0\notin\overline{\bigcup_{t<c}\sigma(B_t(T))}$:
indeed, the matrices $B_t(T)$ are uniformly bounded, and for
$2\times2$ matrices uniform separation of the determinants from
zero is equivalent to uniform separation of their eigenvalues from
zero. Apply \eqref{eq:full-essential-spectrum}.
\end{proof}

\begin{corollary}
\label{cor:full-symmetric-fredholm}
Let $u(t)=t(1-t)$ and $T\in\{M_u\}'$. By
Theorem~\ref{cor:full-symmetric} there are unique
$v\in A_\beta$ and $b\in A_{\beta,0}$ with $T=T_{v,b}$.
Let $B_t(T)$ be the matrix in \eqref{eq:singular-fiber-matrix}.
Then
\[
\sigma_{\mathrm{ap}}(T)=\sigma_{\mathrm{usf}}(T)
=\sigma_{\mathrm{ess}}(T)=\sigma(T)
=\overline{\bigcup_{0\le t<c}\sigma(B_t(T))}.
\]
Moreover, $T$ is Fredholm if and only if it is invertible,
and this is equivalent to
$\inf_{0\le t<c}|\det B_t(T)|>0$.
\end{corollary}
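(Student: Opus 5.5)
This corollary is a direct assembly of Theorem~\ref{cor:full-symmetric} with Theorem~\ref{thm:full-essential-spectrum} and Corollary~\ref{cor:full-fredholm}. First, for $T\in\{M_u\}'$ I apply Theorem~\ref{cor:full-symmetric} to obtain the unique representation $T=T_{v,b}$, with $v=T1\in A_\beta$ and $b=\tfrac12((T1)h-Th)\in A_{\beta,0}$. This places $T$ in $\mathscr C_\beta$, and the matrices $B_t(T)$ of \eqref{eq:singular-fiber-matrix} are well defined because they are built only from $v$ and $a=b/h$ on $[0,1]\setminus\{c\}$. Uniqueness of $(v,b)$ means $B_t(T)$ depends only on $T$, not on any choice of representation.

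Second, I invoke Theorem~\ref{thm:full-essential-spectrum} for $T_{v,b}$. It gives the chain $\sigma_{\mathrm{ap}}(T)=\sigma_{\mathrm{usf}}(T)=\sigma_{\mathrm{ess}}(T)=\sigma(T)=\overline{\bigcup_{0\le t<c}\sigma(B_t(T))}$ verbatim. All spectra there are taken in $\mathcal B(A_\beta)$, as the paper's convention requires, so nothing further about the ambient algebra needs checking.

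Third, I obtain the Fredholm statement by applying the equalities at $\lambda=0$: $T$ is Fredholm iff $0\notin\sigma_{\mathrm{ess}}(T)=\sigma(T)$, iff $T$ is invertible. For the determinant criterion I use Corollary~\ref{cor:full-fredholm}. Its key point is that the entries of $B_t(T)$ are uniformly bounded, since $|b/h|\le[b]_\beta$. For uniformly bounded $2\times2$ matrices, $|\det|$ bounded below is equivalent to all eigenvalues being uniformly separated from $0$. Indeed, $|\det|=|\lambda_1\lambda_2|$, and each $|\lambda_i|$ is bounded above by the norm. That separation is equivalent to $0\notin\overline{\bigcup_t\sigma(B_t(T))}$.

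I expect no real obstacle. The only point requiring care is that the commutant element really coincides with $T_{v,b}$ at $c$ as well, so the spectral theorem applies to $T$ itself and not just to an operator agreeing with it off $c$. Theorem~\ref{cor:full-symmetric} already settles this through continuity at $c$.
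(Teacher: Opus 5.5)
Your proposal is correct and matches the paper's proof, which likewise combines $\{M_u\}'=\mathscr C_\beta$ from Theorem~\ref{cor:full-symmetric} with Theorem~\ref{thm:full-essential-spectrum} and Corollary~\ref{cor:full-fredholm}. Your added justification of the determinant criterion, via the bound $|b/h|\le[b]_\beta$ and the $2\times 2$ eigenvalue--determinant comparison, is exactly the reasoning inside Corollary~\ref{cor:full-fredholm}.
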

\begin{proof}
Apply Theorem~\ref{thm:full-essential-spectrum} and
Corollary~\ref{cor:full-fredholm} using the equality
$\{M_u\}'=\mathscr C_\beta$ from
Theorem~\ref{cor:full-symmetric}.
\end{proof}

\begin{example}\label{ex:essential-oscillatory}
Let $P$ be the projection in Example~\ref{ex:oscillatory-projection}
and put $S_\theta=I+\theta P$ for $\theta\in\mathbb C\setminus\{0\}$.
Since $P^2=P$, $P\ne0$, and $\ker P=E\ne\{0\}$,
\[
 \sigma_{\mathrm{ess}}(S_\theta)=\sigma(S_\theta)
 =\{1,1+\theta\}.
\]
For $\theta\ne-1$,
\[
 S_\theta^{-1}=I-\frac{\theta}{1+\theta}P.
\]
When $\theta\notin\{0,-1\}$, both $S_\theta$ and
$S_\theta^{-1}$ lie in $\mathscr C_\beta\setminus J_\beta$:
otherwise, subtraction of the identity and division by a
nonzero scalar would put $P$ in $J_\beta$, contrary to
Example~\ref{ex:oscillatory-projection}.
\end{example}

\section{Finite-branch multipliers: representation, spectrum, and local closure}
\label{sec:finite-branches}

Theorem~\ref{thm:exact-finite-fiber} already gives locality
for every function in $A_\beta$, so no second locality proof is needed.
Here we specialize its formula to finite full branches and investigate
the regular finite-branch operator algebra. The branch assumption is
essential for the composition operators and matrix representations;
finite fibers alone do not guarantee bounded inverse-branch composition.

For an integer $n\ge2$, partition $[0,1]$ into
$I_k=[(k-1)/n,k/n]$, $1\le k\le n$, and define the continuous
piecewise affine \emph{full-branch tent map}
\[
u_n(t)=\begin{cases}
nt-(k-1),&t\in I_k,\ k\text{ odd},\\
k-nt,&t\in I_k,\ k\text{ even}.
\end{cases}
\]
Each branch maps onto $[0,1]$. Write
$\tau_k:[0,1]\to I_k$ for its affine inverse, set
$\phi_k=\tau_k\circ u_n$, and let $C_{\phi_k}f=f\circ\phi_k$.
For $0<y<1$, the $n$ points $\tau_k(y)$ are distinct and comprise
the level set $u_n^{-1}(y)$. At $y=0$ or $y=1$, some of these points
coalesce; the matrices below retain the limiting $n$-sheet structure.

\begin{lemma}\label{lem:piecewise-composition}
If $\phi:[0,1]\to[0,1]$ is continuous and piecewise affine with
finitely many pieces, then $C_\phi$ is a bounded operator on $A_\beta$.
In particular each $C_{\phi_k}$ is bounded, and
$C_{\phi_j}C_{\phi_k}=C_{\phi_k}$.
\end{lemma}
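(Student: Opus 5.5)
Let $\phi$ be continuous and piecewise affine, with breakpoints $0=x_0<x_1<\dots<x_m=1$ and $\phi(x)=\alpha_i x+\gamma_i$ on $[x_{i-1},x_i]$. Put $L=\max_i|\alpha_i|$; then $\phi$ is $L$-Lipschitz on $[0,1]$. I will bound the sup norm, the H\"older seminorm, and the restricted variation separately.

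The first two are immediate. Clearly $\|f\circ\phi\|_\infty\le\|f\|_\infty$. For the seminorm,
\[
|f(\phi(t))-f(\phi(s))|\le[f]_\beta|\phi(t)-\phi(s)|^\beta\le L^\beta[f]_\beta|t-s|^\beta ,
\]
so $\|C_\phi f\|_\beta\le\max(1,L^\beta)\|f\|_\beta$.

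The main step is the restricted-variation condition. Take a family $\{[a_j,b_j]\}$ with pairwise disjoint interiors, $\sum(b_j-a_j)\le\delta$ and $\sum(b_j-a_j)^\beta\le M$.
\begin{itemize}
\item[(1)] Split each interval at the breakpoints $x_i$ it contains. By the triangle inequality this can only increase $\sum_j|f(\phi(b_j))-f(\phi(a_j))|$. It does not increase total length. Since each interval meets at most $m$ pieces and $\sum_{k\le m}\ell_k^\beta\le m^{1-\beta}(\sum\ell_k)^\beta$, the $\beta$-power sum becomes at most $m^{1-\beta}M$.
\item[(2)] Group the split intervals by the piece $[x_{i-1},x_i]$ containing them. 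If $\alpha_i=0$, the increments vanish.
\item[(3)] If $\alpha_i\neq0$, the map $\phi$ is affine and injective on that piece. It sends the group to intervals with pairwise disjoint interiors, total length at most $|\alpha_i|\delta\le L\delta$, and $\beta$-power sum at most $L^\beta m^{1-\beta}M$. The increments of $f\circ\phi$ over the group equal the increments of $f$ over the image intervals, possibly with reversed orientation, which leaves absolute values unchanged.
\end{itemize}
Summing over the at most $m$ pieces gives
\[
V_{\beta,\delta,M}(f\circ\phi)\le m\,V_{\beta,L\delta,L^\beta m^{1-\beta}M}(f)\longrightarrow0\qquad(\delta\downarrow0).
\]
Hence $f\circ\phi\in A_\beta$. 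So $C_\phi$ maps $A_\beta$ into itself, is linear, and is bounded by the H\"older estimate above.

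The only point that needs care is the images of different pieces. They may overlap, so intervals coming from different pieces need not have disjoint images. That is why I apply the definition of $V$ to each piece separately and accept the factor $m$, rather than trying to treat the whole image family at once.

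For the tent map, each $\phi_k=\tau_k\circ u_n$ is continuous and piecewise affine with $n$ pieces, so $C_{\phi_k}$ is bounded. Moreover $u_n\circ\tau_j=\mathrm{id}_{[0,1]}$, hence
\[
\phi_k\circ\phi_j=\tau_k\circ u_n\circ\tau_j\circ u_n=\tau_k\circ u_n=\phi_k .
\]
Therefore
\[
C_{\phi_j}C_{\phi_k}f=(f\circ\phi_k)\circ\phi_j=f\circ(\phi_k\circ\phi_j)=f\circ\phi_k=C_{\phi_k}f .
\]
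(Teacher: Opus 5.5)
Your proof is correct and follows essentially the same route as the paper. Both arguments use the Lipschitz bound for the H\"older seminorm, split admissible intervals at the breakpoints with the concavity estimate $m^{1-\beta}M$, and apply the definition of $V$ piece by piece to get $V_{\beta,\delta,M}(f\circ\phi)\le m\,V_{\beta,L\delta,L^\beta m^{1-\beta}M}(f)$; the identity $C_{\phi_j}C_{\phi_k}=C_{\phi_k}$ then follows from $\phi_k\circ\phi_j=\phi_k$.
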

\begin{proof}
Let $L$ be a Lipschitz constant for $\phi$ and let $N$ be the number
of affine pieces. The ordinary H\"older bound is
$[f\circ\phi]_\beta\le L^\beta[f]_\beta$.
Given disjoint-interior intervals $[a_j,b_j]$ in the definition of
$V_{\beta,\delta,M}$, split each interval at the breakpoints of
$\phi$. The total lengths remain at most $\delta$, while concavity
of $x^\beta$ gives
\[
\sum_{j,k}|I_{j,k}|^\beta\le N^{1-\beta}M.
\]
Within any fixed affine piece of $\phi$, the images of the split
intervals have disjoint interiors (after discarding zero-length
images), total length at most $L\delta$, and sum of their
$\beta$-powers at most $L^\beta N^{1-\beta}M$. Therefore
\[
V_{\beta,\delta,M}(f\circ\phi)
\le N V_{\beta,L\delta,L^\beta N^{1-\beta}M}(f)
\longrightarrow0\qquad(\delta\downarrow0).
\]
Thus $f\circ\phi\in A_\beta$, and the H\"older bound gives
boundedness. Finally,
$u_n\circ\phi_j=u_n$, so
$\phi_k\circ\phi_j=\tau_k\circ u_n\circ\phi_j=\phi_k$.
The asserted identity follows by evaluating the composition
operators on functions.
\end{proof}

\begin{corollary}\label{thm:finite-locality}
Let $S\in\mathcal B(A_\beta)$ commute with $M_{u_n}$ and
$0<y<1$. For each $t=\tau_i(y)$ there are unique scalars
$a_{i1}(y),\ldots,a_{in}(y)$ such that
\[
 (Sf)(t)=\sum_{j=1}^n a_{ij}(y)f(\tau_j(y))
 \qquad\text{for every }f\in A_\beta.
\]
\end{corollary}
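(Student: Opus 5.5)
This is a direct specialization of Theorem~\ref{thm:exact-finite-fiber} to $u=u_n$, so the plan is to check that theorem's hypotheses and then rename its coefficients.

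First, $u_n$ is real valued and piecewise affine. It is therefore Lipschitz and $\beta$-H\"older, and by Lemma~\ref{lem:ac} it belongs to $A_\beta$. Every level set $u_n^{-1}(\{y\})$ is finite: each of the $n$ affine branches is injective on $I_k$, so the level set has at most $n$ points.

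Second, fix $0<y<1$ and $t=\tau_i(y)$. As noted before Lemma~\ref{lem:piecewise-composition}, the $n$ points $\tau_1(y),\dots,\tau_n(y)$ are distinct and form exactly the fiber $F_t=u_n^{-1}(\{y\})$. Theorem~\ref{thm:exact-finite-fiber} then gives unique numbers $a_{t,s}$, $s\in F_t$, with
\[
(Sf)(t)=\sum_{s\in F_t}a_{t,s}f(s)\qquad(f\in A_\beta).
\]
We set $a_{ij}(y)=a_{\tau_i(y),\tau_j(y)}$. This relabelling is a bijection $j\mapsto\tau_j(y)$ onto $F_t$, precisely because the points are distinct when $0<y<1$. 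That yields the displayed formula.

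Third, uniqueness. The $n$ points are distinct, so Lagrange interpolation polynomials $\ell_j\in A_\beta$ with $\ell_j(\tau_k(y))=\delta_{jk}$ exist. Evaluating any representation on $\ell_j$ forces $a_{ij}(y)=(S\ell_j)(\tau_i(y))$. Hence the scalars are unique, which also matches the uniqueness already given by the theorem.

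There is no serious obstacle. The only point needing care is the restriction $0<y<1$. At $y=0$ or $y=1$ some of the $\tau_k(y)$ coalesce, so the indexing by $j$ would no longer be injective and uniqueness in the $n$-index form would fail. The interior hypothesis excludes exactly this case.
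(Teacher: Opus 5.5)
Your proposal is correct and follows essentially the same route as the paper. Both check that Theorem~\ref{thm:exact-finite-fiber} applies to $u_n$, use that for $0<y<1$ the fiber consists of the $n$ distinct points $\tau_j(y)$, and obtain uniqueness by interpolation. Your verification that $u_n\in A_\beta$ via Lemma~\ref{lem:ac}, and your explicit relabelling, just spell out what the paper leaves implicit.
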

\begin{proof}
The level set $u_n^{-1}(\{y\})$ consists of $n$ distinct
points. Apply Theorem~\ref{thm:exact-finite-fiber}; smooth
interpolation gives uniqueness.
\end{proof}

Put
\[
\mathscr R_{\beta,n}
=\left\{M_v+\sum_{j=1}^n M_{p_j}C_{\phi_j}:
          v,p_1,\ldots,p_n\in A_\beta\right\}.
\]
This is the \emph{regular finite-branch class}. Its coefficients need not be unique: on $I_j$ one has $\phi_j(t)=t$,
which can make multiplication terms redundant. Nevertheless, the
operators and their fiber matrices are unambiguous.
The spectral theorem below concerns this regular algebra; the full commutant is treated in Section~\ref{sec:full-commutant}.
For $n=2$, one has $C_{\phi_1}+C_{\phi_2}=I+R$. Thus
$M_p+M_qR=M_{p-q}+M_q(C_{\phi_1}+C_{\phi_2})$;
in particular the original regular multiplication--reflection
operators are included in $\mathscr R_{\beta,2}$. The latter also
allows independent branch coefficients, so it is a larger class.

\begin{theorem}
\label{thm:finite-spectrum}
The class $\mathscr R_{\beta,n}$ is a unital algebra contained in
the commutant of $M_{u_n}$. For
$T=M_v+\sum_{j=1}^nM_{p_j}C_{\phi_j}$ define
\[
A_y(T)=\left(v(\tau_i(y))\delta_{ij}
                  +p_j(\tau_i(y))\right)_{1\le i,j\le n},
\qquad 0\le y\le1.
\]
Then
\[
\boxed{\ \sigma_{\mathcal B(A_\beta)}(T)
=\bigcup_{0\le y\le1}\sigma(A_y(T)).\ }
\]
In particular, $T$ is invertible on $A_\beta$ if and only if
$\det A_y(T)\ne0$ for every $y\in[0,1]$; whenever this holds,
$T^{-1}\in\mathscr R_{\beta,n}$.
\end{theorem}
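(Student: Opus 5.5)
The plan follows the pattern of Theorems~\ref{thm:invertibility} and~\ref{thm:spectrum}, using the branch isomorphism to turn $T$ into a matrix multiplication operator.

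\emph{Step 1: algebra and commutation.} By Lemma~\ref{lem:piecewise-composition}, each $C_{\phi_j}$ is bounded. Since $u_n\circ\phi_j=u_n$, each $C_{\phi_j}$ commutes with $M_{u_n}$, so $\mathscr R_{\beta,n}$ lies in the commutant. The identity is $M_1$. For products, $C_{\phi_j}M_p=M_{p\circ\phi_j}C_{\phi_j}$ and $C_{\phi_j}C_{\phi_k}=C_{\phi_k}$. Hence every product of two elements of $\mathscr R_{\beta,n}$ is again of the form $M_v+\sum_k M_{p_k}C_{\phi_k}$; the functions $p\circ\phi_j$ lie in $A_\beta$ by the same lemma.

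\emph{Step 2: transfer to matrices.} Define $\Phi:A_\beta\to A_\beta([0,1];\mathbb C^n)$ by $(\Phi f)_i=f\circ\tau_i$. Gluing shows that $\Phi$ is an isomorphism onto the closed subspace $W$ of vector functions satisfying the endpoint matching conditions. These conditions say that $F_i(y)=F_{i+1}(y)$ at the endpoint $y\in\{0,1\}$ where $\tau_i$ and $\tau_{i+1}$ meet; $A_\beta$-regularity holds piecewise by the splitting argument of Lemma~\ref{lem:piecewise-composition}. Evaluating $Tf$ at $\tau_i(y)$ and using $\phi_j(\tau_i(y))=\tau_j(y)$ gives $\Phi T\Phi^{-1}=M_{A(\cdot)}|_W$, where $A_y=A_y(T)$ has entries in $A_\beta$. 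Because $T$ maps $A_\beta$ into itself, $A_y$ preserves $W$.

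\emph{Step 3: the spectral inclusion $\bigcup_y\sigma(A_y)\subseteq\sigma(T)$.} Suppose $0<y_0<1$ and $A_{y_0}\xi=\lambda\xi$. The localized vectors of Lemma~\ref{lem:localized-spectral-test}, supported near $y_0$, lie in $W$ and give approximate eigenvectors, so $\lambda\in\sigma_{\mathrm{ap}}(T)$. Alternatively, one can use surjectivity of the fiber evaluation, as in Theorem~\ref{thm:invertibility}. At $y_0\in\{0,1\}$, continuity of the characteristic polynomial of $A_y$ in $y$ yields eigenvalues of $A_y$ for $y\to y_0$ converging to $\lambda$, and closedness of $\sigma(T)$ finishes this inclusion.

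\emph{Step 4: the reverse inclusion and inverse regularity.} Assume $\det A_y\ne0$ for all $y$; after shifting by $\lambda$ this is the general case. Lemma~\ref{lem:composition-reciprocal} gives $A^{-1}\in M_n(A_\beta)$. Since $A_y$ preserves the fiber $W_y$ at each endpoint (the subspace of vectors with the required coincidences) and is invertible there, $A_y^{-1}$ also preserves $W_y$. Hence $M_{A^{-1}}$ maps $W$ into $W$, and $\Phi^{-1}M_{A^{-1}}\Phi$ is a bounded inverse of $T$.

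It remains to show that $T^{-1}\in\mathscr R_{\beta,n}$, and I expect this to be the main obstacle. The given matrix $B=A^{-1}$ must be written as $B_{ij}(y)=w(\tau_i(y))\delta_{ij}+q_j(\tau_i(y))$ with $w,q_j\in A_\beta$ on $[0,1]$. My first attempt is $w=0$ and $q_j(\tau_i(y))=B_{ij}(y)$, that is, $q_j=\Phi^{-1}(B_{\cdot j})$. This requires each column of $B$ to lie in $W$, i.e.\ the column entries must agree at coalescing endpoints. This fails in general, because the columns of $A$ themselves are only in $W$ up to the diagonal $v$-term.

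The repair is to decompose instead $B=\operatorname{diag}(w\circ\tau_i)+Q$, where the columns of $Q$ lie in $W$. At each coalescence point, $A_y$ restricted to the coincidence structure has the form ``scalar times identity plus matrix with $W$-columns''. I would show that this class of matrices is an algebra closed under inversion. The key computation is that, at an endpoint where $\tau_i=\tau_{i+1}$, the rows $i$ and $i+1$ of $A$ coincide apart from the diagonal $v$-terms, which are equal there. This property is preserved by inverses via a Neumann-type or adjugate argument. The values at these finitely many points then define $w$ by interpolation, and a Lipschitz-cutoff correction as in Lemma~\ref{lem:uniform-localization} yields a global $A_\beta$ choice.
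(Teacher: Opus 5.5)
Your proposal is correct. For the hardest part, showing $T^{-1}\in\mathscr R_{\beta,n}$, it takes a different route from the paper.

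\textbf{The paper's route.} The paper never constructs the inverse symbol. It writes
$\det(zI-A_y(S))=z^n+\sum_k c_k(y)z^k$ and observes that $c_k\circ u_n\in A_\beta$. The multiplications $M_{c_k\circ u_n}$ are therefore central elements of $\mathscr R_{\beta,n}$. The fiberwise Cayley--Hamilton identity becomes the operator identity $S^n+\sum_k M_{c_k\circ u_n}S^k=0$. Once $c_0=(-1)^n\det A_y(S)$ has no zeros, this identity can be solved for $S^{-1}$ as a polynomial in $S$ with central coefficients. That gives invertibility and $S^{-1}\in\mathscr R_{\beta,n}$ at once. It needs neither the gluing isomorphism onto $X_{\beta,n}$ (your $W$), nor endpoint compatibility, nor any description of which symbols are regular.

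\textbf{Your route.} You invert $A$ pointwise and use Lemma~\ref{lem:composition-reciprocal} together with the gluing isomorphism to get a bounded inverse. The isomorphism is the content of Lemma~\ref{branchmodel}, which appears later in the paper but does not depend on this theorem. You then recover regularity of $A^{-1}$ through the fold-by-fold characterization \eqref{regularsymbol}. This costs more bookkeeping, but it gives the inverse symbol explicitly. It is also the argument that survives for the full commutant, where the $c_k$ need not lie in $A_\beta$ at the endpoints; the paper itself switches to direct inversion in Theorem~\ref{thm:full-commutant-spectrum}.

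\textbf{Two small points about your last step.}
\begin{enumerate}
\item The invariant class must be stated fold by fold, not as ``scalar times identity plus matrix with $W_e$-columns.'' Write $r_j=\mathbf e_j^{\mathsf T}-\mathbf e_{j+1}^{\mathsf T}$. The condition is $r_jA(e)\in\mathbb C r_j$ for each fold $j$ at level $e$ separately, so the diagonal part is constant on each fold pair, not a global scalar. For $n=4$, the two folds at level $1$ carry $v(1/4)$ and $v(3/4)$.
\item Closure under inversion needs no Neumann or adjugate argument. If $r_jA(e)=\lambda_jr_j$ and $A(e)$ is invertible, then $\lambda_j\neq0$ and $r_jA(e)^{-1}=\lambda_j^{-1}r_j$.
\end{enumerate}
Then choose a smooth $w$ with $w(j/n)=\lambda_j^{-1}$. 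The columns of $A^{-1}-\operatorname{diag}(w\circ\tau_i)$ satisfy the endpoint matching conditions, exactly as in the converse part of Lemma~\ref{branchmodel}. No cutoff correction is needed.
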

\begin{proof}
The maps $\phi_j$ are continuous and piecewise affine, so their
composition operators are bounded by Lemma~\ref{lem:piecewise-composition}.
Since $u_n\circ\phi_j=u_n$, every member of the displayed class
commutes with $M_{u_n}$. Its closure under multiplication follows
from
\[
C_{\phi_j}M_a=M_{a\circ\phi_j}C_{\phi_j},
\qquad C_{\phi_j}C_{\phi_k}=C_{\phi_k},
\]
and the fact that $A_\beta$ is a Banach algebra stable under these
compositions. The identity is $M_1$.

For $y\in[0,1]$ let
$E_yf=(f(\tau_1(y)),\ldots,f(\tau_n(y)))^\top$. Then
$E_yT=A_y(T)E_y$, even if some coordinates coincide at an endpoint.
For $0<y<1$, the coordinates are distinct and $E_y$ is onto
$\mathbb C^n$ by smooth interpolation. Thus every eigenvalue of
$A_y(T)$ belongs to $\sigma(T)$: if $T-\lambda I$ had a bounded
inverse, $A_y(T)-\lambda I$ would be onto. By continuity of the
matrix entries, the same spectral inclusion holds at $y=0,1$,
because spectra of finite matrices vary continuously as compact
multisets and $\sigma(T)$ is closed.

For the reverse inclusion fix $\lambda\in\mathbb C$ and put
$S=T-\lambda I$. Write
\[
\det(zI-A_y(S))=z^n+c_{n-1}(y)z^{n-1}+\cdots+c_0(y).
\]
Every function $y\mapsto v(\tau_i(y))$ and
$y\mapsto p_j(\tau_i(y))$ belongs to $A_\beta([0,1])$:
the $\tau_i$ are affine, and the interval-variation condition is
preserved by affine changes of variable. Hence the elementary
symmetric coefficients $c_k$ belong to $A_\beta([0,1])$.
Lemma~\ref{lem:piecewise-composition} gives
$c_k\circ u_n\in A_\beta([0,1])$; their multiplication operators
commute with $S$. The matrix Cayley--Hamilton identity on every
fiber implies the operator identity
\[
S^n+M_{c_{n-1}\circ u_n}S^{n-1}+
\cdots+M_{c_0\circ u_n}=0.
\]
If $\det A_y(S)\ne0$ for all $y$, then
$c_0(y)=(-1)^n\det A_y(S)$ is nowhere zero. Its reciprocal, composed
with $u_n$, belongs to $A_\beta$ (apply the reciprocal map to a
nonvanishing member of the algebra). We can therefore solve this
identity for $S^{-1}$ as a polynomial in $S$ with central
coefficients:
\[
S^{-1}=-M_{(c_0\circ u_n)^{-1}}
\left(S^{n-1}+M_{c_{n-1}\circ u_n}S^{n-2}
+\cdots+M_{c_1\circ u_n}\right).
\]
In particular $S^{-1}\in\mathscr R_{\beta,n}$. Applying this with
$\lambda$ outside the union of the matrix spectra completes the
proof.
\end{proof}

\begin{example}
\label{ex:branch-regularity}
Fix $a>1/\beta$, and define the Lipschitz multiplier
\[
w_a(t)=\begin{cases}
2t,&0\le t\le\frac12,\\
(2-2t)^a,&\frac12\le t\le1.
\end{cases}
\]
Every noncritical level set has exactly two points, and the
inverse of the descending branch is
$\theta_2(y)=1-\tfrac12y^{1/a}$. Yet the associated selector
$\psi_2=\theta_2\circ w_a$ satisfies
\[
\psi_2(t)=1-\tfrac12(2t)^{1/a}
\quad(0\le t\le\tfrac12).
\]
Since $1/a<\beta$, the function $\psi_2$ is not
$\beta$-H\"older at $0$. Taking $f(t)=t\in A_\beta$ shows that
$f\circ\psi_2\notin C^\beta$, so $C_{\psi_2}$ does not act on
$A_\beta$. Hence the finite-fiber hypothesis alone does not
support the branch-operator representation used above.
\end{example}

The more delicate point is norm approximation at branch collisions.
For the next result set $\xi=1/n$, choose
$0<\eta<1/(4n)$, and let $h_\xi(t)
=\operatorname{sgn}(t-\xi)|t-\xi|^\beta$, with $h_\xi(\xi)=0$.
For $b\in A_\beta$ supported in $[\xi,\xi+\eta]$ and satisfying
$b(\xi)=0$, define
\[
(H_bf)(t)=\begin{cases}
\dfrac{b(t)}{h_\xi(t)}
       \bigl(f(\phi_1(t))-f(t)\bigr),&t\ne\xi,\\
0,&t=\xi.
\end{cases}
\]
The coefficient is zero away from the support of $b$. The first
branch is fixed by $\phi_1$, while for
$\xi\le t\le\xi+\eta$ one has $\phi_1(t)=2\xi-t$.
Consequently $H_b$ commutes with $M_{u_n}$.

Define the \emph{one-sided fold defect}
\[
\Omega^+_{\beta,\xi}(b)
=\min_{L\in\mathbb C}\lim_{r\downarrow0}
\bigl[b-L(t-\xi)^\beta\bigr]_{\beta,[\xi,\xi+r]}.
\]

\begin{theorem}
\label{thm:fold-distance}
The operator $H_b$ is bounded on $A_\beta$. The minimum in
$\Omega^+_{\beta,\xi}(b)$ exists, and constants $c_{\beta,n,\eta}>0$
and $C_{\beta,n,\eta}>0$ (independent of $b$) satisfy
\[
c_{\beta,n,\eta}\Omega^+_{\beta,\xi}(b)
\le\operatorname{dist}\bigl(H_b,\mathscr R_{\beta,n}\bigr)
\le C_{\beta,n,\eta}\Omega^+_{\beta,\xi}(b).
\]
Thus $H_b\in\overline{\mathscr R_{\beta,n}}^{\|\cdot\|}$ if and
only if $\Omega^+_{\beta,\xi}(b)=0$.
Moreover, for every $n\ge2$ the regular finite-branch algebra
$\mathscr R_{\beta,n}$ is not norm closed; its operator-norm closure
does not contain all operators $H_b$.
\end{theorem}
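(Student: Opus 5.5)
The plan is to transplant the proof of Theorem~\ref{thm:distance} to the one-sided fold at $\xi$. We work on $[\xi,\xi+\eta]$, where $\phi_1(t)=2\xi-t$, so $H_b$ acts as the reflection-type operator $f\mapsto (b/h_\xi)(f(2\xi-t)-f(t))$. This is supported on one side of the fold.

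\textbf{Boundedness.} For $t\in[\xi,\xi+\eta]$ put $g(t)=f(2\xi-t)-f(t)$. Then $g(\xi)=0$ and $\|g\|_\beta\le C\|f\|_\beta$, by Lemma~\ref{lem:piecewise-composition}. Extend $g$ oddly about $\xi$, i.e.\ set $g(2\xi-t)=-g(t)$. After the affine change of variables sending $\xi$ to $c$, Lemma~\ref{lem:multiplier-bound} applies to $b\cdot g/h_\xi$. Here we use that $b$ is extended by zero to the left of $\xi$; it vanishes at $\xi$ and is supported in $[\xi,\xi+\eta]$. The result is in $A_\beta$ with norm at most $C\|b\|_\beta\|f\|_\beta$. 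Since $\eta<1/(4n)$, the support stays away from the other breakpoints, so this one-sided piece glues to zero outside.

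\textbf{Existence of the minimum.} This is identical to Theorem~\ref{thm:distance}. The function $\Phi(L)$ is Lipschitz in $L$, and it is coercive because $[(t-\xi)^\beta]_{\beta,[\xi,\xi+r]}=1$.

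\textbf{Upper bound.} Pick a minimizer $L$ and set $d=b-L(t-\xi)^\beta\chi$. Here $\chi$ is a fixed smooth cutoff, equal to $1$ near $\xi$ and supported in $[\xi-\eta,\xi+\eta]$; its effect is harmless. Cut $d$ off near $\xi$ with $\chi_r$, as in Theorem~\ref{thm:distance}, so that $\|\chi_rd\|_\beta\le C\,[d]_{\beta,[\xi,\xi+2r]}$ and $d_r=(1-\chi_r)d$ vanishes near $\xi$. Then $H_{d_r}=M_{q}(C_{\phi_1}-I)$ with $q=d_r/h_\xi\in A_\beta$, which is regular.

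It remains to see that $H_{L(t-\xi)^\beta\chi}$ is regular. On the right of $\xi$ its coefficient is $L\chi$, which is smooth. So this operator equals $M_{L\chi\mathbf 1_{[\xi,1]}}(C_{\phi_1}-I)$. That multiplier is discontinuous, but the rewriting avoids it: on $[\xi,\xi+\eta]\subset I_2$ we have $C_{\phi_2}f=f$ and $C_{\phi_1}f=f(2\xi-\cdot)$. Hence the operator equals $M_{L\chi_+}C_{\phi_1}-M_{L\chi_+}C_{\phi_2}$, where $\chi_+=\chi\cdot\mathbf 1_{t\ge\xi}$. The obstacle is the jump of $\chi_+$ at $\xi$.

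To remove the jump, use a multiplier that depends on the sheet. On $I_1$ near $\xi$, $\phi_1=\mathrm{id}$ and $\phi_2(t)=2\xi-t$, so $C_{\phi_2}-C_{\phi_1}$ there has the mirror form $f(2\xi-t)-f(t)$. Choose $p_1=L\chi$ and $p_2=-L\chi$, both continuous and equal at $\xi$. Then $M_{p_1}C_{\phi_1}+M_{p_2}C_{\phi_2}$ gives $L\chi(f(2\xi-t)-f(t))$ on the right of $\xi$ and $L\chi(f(t)-f(2\xi-t))$ on the left. The left part is cancelled modulo regular operators, or else absorbed: we choose the one-sided coefficient as the symmetric combination, with $p_1=L\chi_+$-type signs arranged so that at $\xi$ both contributions vanish because $f(2\xi-t)-f(t)\to0$. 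This last bookkeeping step is the one I expect to be most delicate. The fallback is to verify directly that $L\mathbf 1_{[\xi,\xi+\eta]}(C_{\phi_1}-I)$ lies in $\mathscr R_{\beta,n}$ modulo terms with coefficient $(t-\xi)^\beta$-small.

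With regularity in hand, the estimate follows as before:
\[
\operatorname{dist}\le C\,\|\chi_r d\|_\beta\to C\,\Omega^+_{\beta,\xi}(b).
\]

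\textbf{Lower bound.} Take any $R=M_v+\sum M_{p_j}C_{\phi_j}$ and compare $H_b-R$ on test functions. Apply it to $1$, which recovers $v+\sum_j p_j$. Then apply it to the even and odd cusp functions at $\xi$, namely $|t-\xi|^\beta\chi$ and $h_\xi\chi$, which separate the two sheets near $\xi$. On $t>\xi$ near $\xi$, $(H_b h_\xi\chi)(t)=-2b(t)$. Meanwhile $R(h_\xi\chi)$ equals $h_\xi$ times the combination $(p_2-p_1+v)$, up to values at $\xi$, plus an error whose local $\beta$-seminorm tends to zero. The reason is that $(p-p(\xi))h_\xi$ has vanishing local seminorm. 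Therefore
\[
[b-L(t-\xi)^\beta]_{\beta,[\xi,\xi+r]}\lesssim\|(H_b-R)(h_\xi\chi)\|_\beta+o(1)
\]
for a suitable $L$, which gives $c\,\Omega^+\le\operatorname{dist}$.

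\textbf{Non-closedness.} Take $b=(t-\xi)^{\beta+\alpha}\chi$. Then $\Omega^+=0$, so $H_b$ lies in the closure. It is not in $\mathscr R_{\beta,n}$ itself, since its coefficient $(t-\xi)^\alpha$ cannot be matched by $A_\beta$ coefficients; this is checked via the fiber matrices, as in Example~\ref{ex:strict}. Finally, $b=(t-\xi)^\beta\psi(\log(1/(t-\xi)))$ with an oscillating $\psi$ gives $\Omega^+>0$, so that $H_b$ is not in the closure.
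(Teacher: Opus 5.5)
Your boundedness argument, the existence of the minimizer, the lower bound and the two examples are sound. For the lower bound, your odd test function works as well as the paper's one-sided cusp $f_-$. For $t\in(\xi,\xi+r]$ one gets exactly $(H_b-R)(h_\xi\chi)(t)=-2b(t)-(v+p_2-p_1)(t)(t-\xi)^\beta$. Freezing $v+p_2-p_1$ at $\xi$ then gives $\Omega^+_{\beta,\xi}(b)\le\tfrac12\|h_\xi\chi\|_\beta\,\|H_b-R\|$.

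The genuine gap is in the upper bound. Your approximant is $H_{d_r}+H_{L\chi(t-\xi)_+^\beta}$, and you never show that the second summand lies in $\mathscr R_{\beta,n}$. You leave this step open, and the concrete choice you suggest does not work. The operator $M_{L\chi}C_{\phi_1}-M_{L\chi}C_{\phi_2}$ acts correctly on $(\xi,\xi+\eta]$, but on the left of $\xi$ it acts as $L\chi(t)\bigl(f(t)-f(2\xi-t)\bigr)$. The phrase ``cancelled modulo regular operators, or else absorbed'' does not remove this term. So for a minimizer $L\ne0$ you have no regular operator within $C\,\Omega^+_{\beta,\xi}(b)$ of $H_b$, and the ``if'' half of the closure criterion is unproved. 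Your non-closedness example survives, since there $L=0$ and $H_{d_r}$ alone suffices.

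The missing observation is that the jump of $\chi_+$ is invisible to the operator. Since $\phi_1=\mathrm{id}$ on $I_1$, one has $\bigl((C_{\phi_1}-I)f\bigr)(t)=0$ for all $t\in I_1$ and all $f$. Hence $M_q(C_{\phi_1}-I)$ depends only on $q|_{[\xi,1]}$, and you may use the continuous coefficient $L\chi$ in place of $L\chi_+$. With $\chi$ supported in $[\xi-\eta,\xi+\eta]$, one has exactly $M_{-L\chi}+M_{L\chi}C_{\phi_1}=H_{L\chi(t-\xi)_+^\beta}$, which is a regular operator.

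Merging it with $H_{d_r}$ gives the single regular approximant $S_r=M_{-q_r}+M_{q_r}C_{\phi_1}$. Here $q_r=L\chi$ for $t\le\xi$ and $q_r=L\chi+(1-\chi_r)d/h_\xi$ for $t>\xi$. This $q_r$ is continuous at $\xi$ because $1-\chi_r$ vanishes there, and this is precisely the paper's construction. Then $H_b-S_r=H_{\chi_r d}$. Your cutoff estimate together with the multiplier bound gives $\operatorname{dist}(H_b,\mathscr R_{\beta,n})\le C\,[d]_{\beta,[\xi,\xi+2r]}\to C\,\Omega^+_{\beta,\xi}(b)$.
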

\begin{proof}
The proof of Lemma~\ref{lem:multiplier-bound} uses only the
vanishing of both factors at the distinguished point; the oddness
assumption on $g$ is not used in its estimates. The same argument,
with $c$ replaced by $\xi$, gives
\[
\left\|\frac{bg}{h_\xi}\right\|_\beta
\le K_\beta\|b\|_\beta\|g\|_\beta
\quad\text{if }b(\xi)=g(\xi)=0,
\]
with the quotient extended by zero at $\xi$.
Apply this to $g=f\circ\phi_1-f$, which belongs to $A_\beta$ by
Lemma~\ref{lem:piecewise-composition} and vanishes at $\xi$.
This proves boundedness of $H_b$.

Set
$\Phi_b(L)=\lim_{r\downarrow0}
[b-L(t-\xi)^\beta]_{\beta,[\xi,\xi+r]}$.
The limit exists by monotonicity. The seminorm inequality shows
$|\Phi_b(L)-\Phi_b(K)|\le|L-K|$ and
$\Phi_b(L)\ge|L|-[b]_\beta$. Therefore $\Phi_b$ is continuous
and coercive, so its minimum is attained.

For the lower bound choose $f_-$ supported immediately to the left
of $\xi$ with
$f_-(s)=(\xi-s)^\beta$ on $[\xi-\eta,\xi]$ and $f_-(s)=0$
on $[\xi,1]$, using a Lipschitz cutoff on
$[\xi-2\eta,\xi-\eta]$.
Then $f_-\in AC\cap C^\beta\subset A_\beta$.
For $t\in[\xi,\xi+r]$ and all sufficiently small $r$,
\[
(H_bf_-)(t)=b(t),
\]
whereas for any
$S=M_v+\sum_j M_{p_j}C_{\phi_j}\in\mathscr R_{\beta,n}$,
\[
(Sf_-)(t)=p_1(t)(t-\xi)^\beta.
\]
Indeed, $f_-(t)=0$, $\phi_2(t)=t$, the other branch images lie
outside the support of $f_-$, and
$\phi_1(t)=2\xi-t$. It follows that
\[
[b-p_1(t)(t-\xi)^\beta]_{\beta,[\xi,\xi+r]}
\le\|f_-\|_\beta\,\|H_b-S\|.
\]
Since $p_1\in A_\beta\subset C^\beta$, putting $L=p_1(\xi)$
we have
\[
[(p_1-p_1(\xi))(t-\xi)^\beta]_{\beta,[\xi,\xi+r]}
\le 2r^\beta[p_1]_\beta\longrightarrow0.
\]
Taking $r\downarrow0$ and then the infimum over $S$ yields
$\Omega^+_{\beta,\xi}(b)\le\|f_-\|_\beta
\operatorname{dist}(H_b,\mathscr R_{\beta,n})$.

For the upper bound choose $L$ attaining the minimum. Select
$\chi_0\in C^\infty([0,1])$ supported near $\xi$, equal to one on
$[\xi,\xi+\eta]$, and supported on the left only in $I_1$.
For $t\ge\xi$, set
$d(t)=b(t)-L\chi_0(t)h_\xi(t)$; then $d(\xi)=0$.
Let $\chi_r$ be a Lipschitz cutoff which equals one on
$[\xi,\xi+r]$ and zero for $t\ge\xi+2r$. Define a coefficient
$q_r\in A_\beta$ by
\[
q_r(t)=\begin{cases}
L\chi_0(t),&t\le\xi,\\[2pt]
L\chi_0(t)+(1-\chi_r(t))\dfrac{d(t)}{h_\xi(t)},&t>\xi.
\end{cases}
\]
For fixed $r>0$, the quotient is taken away from $\xi$ and
extended by zero near $\xi$, so $q_r\in A_\beta$. Consequently
$S_r=M_{-q_r}+M_{q_r}C_{\phi_1}\in\mathscr R_{\beta,n}$.
On $I_1$ the operator $C_{\phi_1}-I$ vanishes, and on the other
branches the coefficient $q_r$ is zero. Therefore
\[
H_b-S_r=H_{\widetilde d_r},\qquad
\widetilde d_r(t)=\begin{cases}
\chi_r(t)d(t),&t>\xi,\\0,&t\le\xi.
\end{cases}
\]
The standard H\"older cutoff estimate, using $d(\xi)=0$, gives
for small $r$
\[
\|\widetilde d_r\|_\beta
\le C_\beta[d]_{\beta,[\xi,\xi+2r]}
=C_\beta[b-L(t-\xi)^\beta]_{\beta,[\xi,\xi+2r]},
\]
because $\chi_0=1$ near $\xi$. The preceding multiplier estimate
then yields
$\|H_b-S_r\|\le C'_{\beta,n}
[b-L(t-\xi)^\beta]_{\beta,[\xi,\xi+2r]}$.
Letting $r\downarrow0$ proves the upper bound.

To see that the inclusions are strict, fix $0<\alpha<\beta$ and
choose a smooth cutoff $\rho$, equal to one near $\xi$ and supported
in $[\xi-\eta,\xi+\eta]$. Put
\[
b_1(t)=\rho(t)(t-\xi)_+^{\beta+\alpha}.
\]
Then $b_1\in AC\cap C^\beta$, the one-sided defect vanishes, and
$H_{b_1}\in\overline{\mathscr R_{\beta,n}}^{\|\cdot\|}$.
But $H_{b_1}\notin\mathscr R_{\beta,n}$: testing on the distinct
fiber points on $I_2$ shows that a regular representation would
force $p_1(t)=\rho(t)(t-\xi)^\alpha$ for $t>\xi$ close to $\xi$,
contrary to $p_1\in C^\beta$.

Finally, let
\[
b_2(t)=\begin{cases}
\rho(t)(t-\xi)^\beta\sin(\log(1/(t-\xi))),&t>\xi,\\
0,&t\le\xi.
\end{cases}
\]
Its derivative on the right is $O((t-\xi)^{\beta-1})$, so
$b_2\in AC\cap C^\beta\subset A_\beta$. Since its oscillatory
factor approaches both $1$ and $-1$ arbitrarily near $\xi$,
$\Phi_{b_2}(L)\ge\max\{|1-L|,|-1-L|\}\ge1$ for every $L$.
The lower distance bound shows
$H_{b_2}\notin\overline{\mathscr R_{\beta,n}}^{\|\cdot\|}$.
\end{proof}

\begin{example}\label{ex:three-branches}
For $n=3$,
\[
u_3(t)=\begin{cases}3t,&0\le t\le1/3,\\
2-3t,&1/3\le t\le2/3,\\
3t-2,&2/3\le t\le1,
\end{cases}
\]
and the inverse branches are
$\tau_1(y)=y/3$, $\tau_2(y)=(2-y)/3$, and
$\tau_3(y)=(2+y)/3$. For every
$T=M_v+\sum_{j=1}^3M_{p_j}C_{\phi_j}$,
Theorem~\ref{thm:finite-spectrum} gives the spectrum of $T$ as the
union of the spectra of the explicit $3\times3$ matrices
\[
\left(v(\tau_i(y))\delta_{ij}+p_j(\tau_i(y))\right)_{i,j=1}^3.
\]
At $\xi=1/3$, Theorem~\ref{thm:fold-distance} produces a
norm-limit of regular commuting operators with a genuinely
non-$\beta$-H\"older coefficient, and also a bounded commuting
operator that is not such a norm-limit.

For an explicit spectral calculation, put
$W=\sum_{j=1}^3 C_{\phi_j}$. The branch identities give $W^2=3W$,
and hence, for $\lambda\ne0$,
$\sigma(I+\lambda W)=\{1,1+3\lambda\}$.
The operator is invertible precisely when $\lambda\ne-1/3$,
and in that case
$(I+\lambda W)^{-1}=I-\lambda W/(1+3\lambda)$.
(The same formula holds for $\lambda=0$, when the spectrum is
$\{1\}$.)
\end{example}

\section{Complete finite-branch norm closure}
\label{sec:complete-finite}
We now determine the norm closure of the regular finite-branch algebra. The class $\mathfrak M_{\beta,n}$ of symbols introduced below turns out to be the full commutant of $M_{u_n}$ (Corollary~\ref{cor:class-full}). Its regular norm closure is characterized by two endpoint defects.
We use the local weighted multiplier estimate from
Lemma~\ref{lem:multiplier-bound}, also valid on one-sided intervals:
if $b,g\in A_\beta$ vanish at $e\in\{0,1\}$, then
\begin{equation}\label{weighted}
 \left\|\frac{bg}{h_e}\right\|_{A_\beta}
 \le C_\beta\|b\|_{A_\beta}\|g\|_{A_\beta},
 \qquad h_e(y)=|y-e|^\beta,
\end{equation}
with the quotient set equal to zero at $e$.
For matrix-valued functions this estimate is applied componentwise.
The same notation $A_\beta(J)$ is used for the restricted-variation
algebra on a subinterval $J$, with its natural norm.

For clarity, the one-sided version of \eqref{weighted} follows from the
same increment estimate as Lemma~\ref{lem:multiplier-bound}.
If $x,y$ lie on the same side of $e$, ordered so that
$0<h_e(x)\le h_e(y)$, and $b(e)=g(e)=0$, then
\[
\left|\frac{b(y)g(y)}{h_e(y)}-\frac{b(x)g(x)}{h_e(x)}\right|
\le [g]_\beta|b(y)-b(x)|+[b]_\beta|g(y)-g(x)|
 +[b]_\beta[g]_\beta|h_e(y)-h_e(x)|.
\]
The estimate also holds with one point equal to $e$. Since
$h_e\in AC\cap C^\beta\subset A_\beta$, summing over the
interval families defining restricted variation gives \eqref{weighted},
including its $A_\beta$ conclusion and norm estimate.
\newcommand{\Rn}{\mathscr R_{\beta,n}}
\newcommand{\Xn}{X_{\beta,n}}
\newcommand{\dist}{\operatorname{dist}}
For $e\in\{0,1\}$, let $W_e\subset\mathbb C^n$ be the space of endpoint values compatible with the folds:
\[
 W_e=\{z\in\mathbb C^n:\ z_j=z_{j+1}\ \text{if }u_n(j/n)=e,\ 1\le j<n\}.
\]
Here $\mathbf e_1,\ldots,\mathbf e_n$ are the standard coordinate vectors of
$\mathbb C^n$, and $M_n(\mathbb C)$ denotes the space of complex
$n\times n$ matrices. Let $P_e$ be the \emph{orthogonal} projection
onto $W_e$ in the usual Euclidean inner product, and put $Q_e=I-P_e$.
The projection $Q_e$ selects the endpoint directions which vanish for
vectors in $W_e$. Put
\[
 \mathcal L_e=\bigl\{L\in M_n(\mathbb C):\ L=LQ_e,\quad
 (\mathbf e_j^{\mathsf T}-\mathbf e_{j+1}^{\mathsf T})L
 \in\mathbb C(\mathbf e_j^{\mathsf T}-\mathbf e_{j+1}^{\mathsf T})
 \ \text{whenever }u_n(j/n)=e\bigr\}.
\]
Thus $\mathcal L_e$ contains the admissible endpoint matrices acting
on the $Q_e$ directions. To make the simultaneous-fold condition
explicit, write $r_j=\mathbf e_j^{\mathsf T}-\mathbf e_{j+1}^{\mathsf T}$
and $\mathcal J_e=\{j:u_n(j/n)=e\}$. Endpoint preservation alone gives
$r_jA(e)P_e=0$, and therefore only
\[
 r_jA(e)\in W_e^\perp
       =\operatorname{span}\{r_k:k\in\mathcal J_e\}
       \qquad(j\in\mathcal J_e).
\]
It allows a row difference at one fold to involve the difference
vectors of other folds at the same level. By contrast, the regular
symbol condition requires $r_jA(e)\in\mathbb C r_j$ for
\emph{each} $j\in\mathcal J_e$ separately. Accordingly,
$L\in\mathcal L_e$ cannot mix these fold-difference directions.
This distinction is essential in the upper and lower distance estimates,
not merely in the choice of notation. The matrix norms and
matrix-valued H\"older seminorms below
are taken entrywise, using the maximum of the scalar norms; any
fixed equivalent finite-dimensional matrix norm gives the same closure.

\begin{lemma}\label{branchmodel}
The map $Uf(y)=(f(\tau_1(y)),\ldots,f(\tau_n(y)))^{\mathsf T}$ is a bounded linear isomorphism of $A_\beta([0,1])$ onto
\[
 \Xn=\{F\in A_\beta([0,1];\mathbb C^n):\ F(0)\in W_0,\ F(1)\in W_1\},
\]
where the vector-valued norm is the maximum of the component $A_\beta$ norms. Under this isomorphism, $\Rn$ consists \emph{exactly} of multiplication operators $T_A F(y)=A(y)F(y)$ with
\begin{equation}\label{regularsymbol}
 A\in M_n(A_\beta([0,1])),\qquad \text{For every fold }j,\; A_{j\bullet}(e)-A_{j+1,\bullet}(e)\in\mathbb C(\mathbf e_j^{\mathsf T}-\mathbf e_{j+1}^{\mathsf T}),\quad e=u_n(j/n).
\end{equation}
\end{lemma}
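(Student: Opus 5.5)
The proof splits into two parts: first that $U$ is an isomorphism onto $\Xn$, and then that $\Rn$ corresponds exactly to the symbols in \eqref{regularsymbol}.

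\emph{The isomorphism.} Each $\tau_k$ is affine with slope $\pm1/n$. Affine changes of variable preserve both the H\"older seminorm, up to the factor $n^{-\beta}$, and the restricted-variation condition, with rescaled budgets $\delta/n$ and $n^{-\beta}M$. Hence $f\mapsto f\circ\tau_k$ is bounded from $A_\beta([0,1])$ into $A_\beta([0,1])$, and so $U$ is bounded.

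The endpoint constraints hold because at a fold $j/n$ with $u_n(j/n)=e$ we have $\tau_j(e)=\tau_{j+1}(e)=j/n$. Thus $Uf(e)\in W_e$.

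For the inverse, given $F\in\Xn$ define $f=F_k\circ u_n$ on $I_k$. The constraints $F(e)\in W_e$ are exactly what make these pieces agree at the shared points $j/n$, so $f$ is well defined. On each $I_k$, $f$ lies in $A_\beta(I_k)$ with norm controlled by $\|F_k\|$. The gluing step then needs two checks:
\begin{itemize}
\item \emph{H\"older across breakpoints.} For $s<t$ in different pieces, insert the intermediate breakpoints and use concavity, which gives $\sum|x_i|^\beta\le n^{1-\beta}(\sum |x_i|)^\beta$.
\item \emph{Restricted variation.} Split admissible families at the breakpoints, exactly as in Lemma~\ref{lem:piecewise-composition}. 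This costs a factor $n^{1-\beta}$ in $M$.
\end{itemize}
Together these show $U^{-1}$ is bounded and onto $A_\beta$. This gluing is the only genuinely technical step, and it mirrors the earlier composition lemma.

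\emph{Forward inclusion for $\Rn$.} Since $\phi_j\circ\tau_i=\tau_j$, a direct computation gives
\[
U\Bigl(vf+\sum_j p_j\,f\circ\phi_j\Bigr)_i(y)=v(\tau_i(y))F_i(y)+\sum_j p_j(\tau_i(y))F_j(y).
\]
So the operator becomes $T_A$ with $A_{ij}=v\circ\tau_i\,\delta_{ij}+p_j\circ\tau_i$, matching $A_y(T)$, and $A\in M_n(A_\beta)$. At a fold $j$ with level $e$, the points $\tau_j(e)$ and $\tau_{j+1}(e)$ coincide, so the $p$-parts of rows $j$ and $j+1$ agree at $e$. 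Their difference is therefore $v(j/n)(\mathbf e_j^{\mathsf T}-\mathbf e_{j+1}^{\mathsf T})$, which is \eqref{regularsymbol}.

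\emph{Converse, the main obstacle.} Given $A$ satisfying \eqref{regularsymbol}, we must produce $v,p_1,\dots,p_n\in A_\beta$. The plan is as follows.
\begin{enumerate}
\item Take $v\equiv0$ at first and try $p_j$ defined on $I_i$ by $p_j=A_{ij}\circ u_n$. This is consistent at a fold $j'/n$ only if rows $j'$ and $j'+1$ of $A(e)$ agree.
\item The allowed discrepancy is $\lambda_{j'}(\mathbf e_{j'}^{\mathsf T}-\mathbf e_{j'+1}^{\mathsf T})$. Choose $v\in A_\beta$, say piecewise linear, with $v(j'/n)=\lambda_{j'}$ at each fold point, and replace $A$ by $A-\operatorname{diag}(v\circ\tau_i)$.
\item Check that the modified symbol has equal rows $j'$ and $j'+1$ at each fold. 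At the shared point, $v\circ\tau_{j'}(e)=v\circ\tau_{j'+1}(e)=\lambda_{j'}$, so exactly the discrepancy is removed.
\item Now each $p_j$ glues continuously, and it lies in $A_\beta$ by the same gluing argument as in the isomorphism step.
\end{enumerate}

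The key point is that the condition is imposed separately for each fold. Simultaneous folds at the same level sit at distinct points $j'/n$, so $v$ can take independent values there and no conflict arises. Finally, $U$ intertwines the resulting operator with $T_A$, which establishes the claimed exact correspondence.
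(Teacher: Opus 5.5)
Your proposal is correct and follows the paper's proof essentially step for step. Boundedness of $U$ comes from affine composition, and the inverse is obtained by gluing $F_k\circ u_n$ on the branches, splitting at breakpoints for both the H\"older and restricted-variation estimates. The forward row-difference identity comes from $\tau_j(e)=\tau_{j+1}(e)=j/n$, and the converse chooses $v$ with $v(j/n)=\lambda_j$ and sets $p_k\circ\tau_i=A_{ik}-(v\circ\tau_i)\delta_{ik}$. The only cosmetic difference is that you take $v$ piecewise linear where the paper takes it smooth.
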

\begin{proof}
The boundedness of $U$ follows from affine composition. Conversely, components $F_i\in A_\beta$ with the stated matching values can be glued on the $n$ physical branches. The H\"older estimate follows by splitting a pair of points at the finitely many joining points. For the restricted variation condition, split every admissible interval at the same joining points. Total length does not increase, the $\beta$-power budget increases by at most $n^{1-\beta}$, and the resulting intervals on each branch have disjoint interiors. Applying the defining variation condition on each of the finitely many branches proves that the glued function belongs to $A_\beta$. This also gives an inverse norm bound.

A regular operator has on the fibers the matrix
\[
 A_{ij}(y)=v(\tau_i(y))\delta_{ij}+p_j(\tau_i(y));
\]
its entries belong to $A_\beta$. At the physical fold $x_j=j/n$
with level $e=u_n(x_j)$ we have
$\tau_j(e)=\tau_{j+1}(e)=x_j$. Hence the terms
$p_k(x_j)$ cancel in the difference of rows $j$ and $j+1$, giving
\[
 A_{j\bullet}(e)-A_{j+1,\bullet}(e)
   =v(x_j)(\mathbf e_j^{\mathsf T}-\mathbf e_{j+1}^{\mathsf T}).
\]
This is the \emph{individual} row-difference condition at each fold,
including when several folds have the same endpoint level. It implies
$A(e)W_e\subset W_e$, but the latter inclusion alone does not recover
these individual identities.

Conversely, take a matrix satisfying \eqref{regularsymbol}. At the joining point $j/n$ let $e=u_n(j/n)$. By the assumed local row-difference condition,
\[
 A_{j\bullet}(e)-A_{j+1,\bullet}(e)
       =\lambda_j(\mathbf e_j^{\mathsf T}-\mathbf e_{j+1}^{\mathsf T})
\]
for some scalar $\lambda_j$. Choose a smooth scalar function $v$ on $[0,1]$ with $v(j/n)=\lambda_j$ for every joining point. On branch $I_i$ define $p_k$ by
\[
 p_k(\tau_i(y))=A_{ik}(y)-v(\tau_i(y))\delta_{ik}.
\]
The row-difference identity makes all $p_k$ continuous across each joining point. Each branch restriction belongs to $A_\beta$ by affine composition, and the preceding finite-gluing argument gives $p_k\in A_\beta([0,1])$. Hence $A$ represents $M_v+\sum_kM_{p_k}C_{\phi_k}$.
\end{proof}

Write $J_0=[0,\eta]$, $J_1=[1-\eta,1]$ for a fixed $0<\eta<1/4$, and $J_e(r)=\{y\in[0,1]:|y-e|\le r\}$. Let $A:(0,1)\to M_n(\mathbb C)$ be a matrix function, and assume:
\begin{enumerate}
\item[(S1)] Every entry of $A$ belongs to $A_\beta(K)$ for each closed interval $K\subset(0,1)$.
\item[(S2)] On $J_e\setminus\{e\}$ the matrices
\[
 V_e(y)=A(y)P_e,\qquad B_e(y)=h_e(y)A(y)Q_e,
 \qquad h_e(y)=|y-e|^\beta,
\]
extend to $M_n(A_\beta(J_e))$, with $B_e(e)=0$ and $V_e(e)W_e\subset W_e$.
\end{enumerate}
Here $V_e$ is the part acting on compatible endpoint values, while $B_e$ is the weighted part acting on complementary directions. The notation $J_e(r)$ always refers to an interval in the level variable $y$.
For such a symbol define the \emph{endpoint defects}
\begin{equation}\label{defect}
 \Omega_e(B_e)=\min_{L\in\mathcal L_e}\ \lim_{r\downarrow0}
      [B_e-h_eL]_{\beta,J_e(r)},\quad e=0,1.
\end{equation}
The minimum exists: the limiting seminorm is Lipschitz in $L$ and is bounded below by $\|L\|- [B_e]_{\beta,J_e}$, since $[h_e]_{\beta,J_e(r)}=1$. Moreover $\Omega_e(B_e)$ does not depend on the particular choice of $\eta$.

The defect has a useful precise interpretation as a distance in a
space of local H\"older germs. This also separates the general distance
method from the operator-specific estimates used below.

\begin{lemma}\label{lem:germ-reduction}
Fix $e\in\{0,1\}$, and let
\[
 \mathcal G_e=\{B\in A_\beta(J_e;M_n(\mathbb C)):\ B(e)=0\}.
\]
For $B\in\mathcal G_e$, set
\[
 \rho_e(B)=\lim_{r\downarrow0}[B]_{\beta,J_e(r)},\qquad
 \mathcal N_e=\{B\in\mathcal G_e:\rho_e(B)=0\}.
\]
Then $\rho_e$ is a seminorm, $\mathcal N_e$ is a linear subspace, and
$\rho_e$ induces a norm on $\mathcal G_e/\mathcal N_e$.
If $[B]$ denotes its coset and
$\mathcal H_e=\{[h_eL]:L\in\mathcal L_e\}$, then
$\mathcal H_e$ is finite dimensional and closed, and
\[
 \Omega_e(B)=\operatorname{dist}_{\mathcal G_e/\mathcal N_e}
             ([B],\mathcal H_e).
\]
This identity is a reduction of the \emph{defect} to a quotient-space
distance; it does not by itself imply the operator-norm bounds in
Theorem~\ref{fullclosure}.
\end{lemma}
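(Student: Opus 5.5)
I would verify the four assertions in order: seminorm, subspace, norm, then the distance identity.

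\emph{Seminorm.} For $B\in\mathcal G_e$, the map $r\mapsto[B]_{\beta,J_e(r)}$ is nonincreasing as $r\downarrow0$ and bounded below by $0$, so the limit $\rho_e(B)$ exists and is finite. Each local seminorm $[\cdot]_{\beta,J_e(r)}$ is subadditive and absolutely homogeneous, and both properties pass to the limit. Hence $\rho_e$ is a seminorm and $\mathcal N_e=\rho_e^{-1}(0)$ is a linear subspace.

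\emph{Norm on the quotient.} Absolute homogeneity and subadditivity descend to $\mathcal G_e/\mathcal N_e$. The quantity $\rho_e([B])=\rho_e(B)$ is well defined, because the triangle inequality gives $|\rho_e(B)-\rho_e(B')|\le\rho_e(B-B')=0$ when $B-B'\in\mathcal N_e$. Definiteness is tautological: $\rho_e([B])=0$ means $B\in\mathcal N_e$, so $[B]=0$. So $\rho_e$ induces a norm. I will not need completeness, since the distance identity is purely metric.

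\emph{$\mathcal H_e$ is finite dimensional and closed.} The map $L\mapsto[h_eL]$ is linear from the finite-dimensional space $\mathcal L_e$ into the quotient. First, $h_eL\in\mathcal G_e$: $h_e\in AC\cap C^\beta$, so $h_e\in A_\beta(J_e)$ by Lemma~\ref{lem:ac}, and $h_e(e)=0$. The image $\mathcal H_e$ is a finite-dimensional subspace of a normed space, hence closed. I would also note that the map is injective: $[h_e]_{\beta,J_e(r)}=1$ for every $r$, so $\rho_e(h_eL)\ge\|L\|$ up to the fixed norm equivalence. This is the coercivity already used after \eqref{defect}.

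\emph{Distance identity.} By definition,
\[
\operatorname{dist}([B],\mathcal H_e)=\inf_{L\in\mathcal L_e}\rho_e([B]-[h_eL])=\inf_{L\in\mathcal L_e}\lim_{r\downarrow0}[B-h_eL]_{\beta,J_e(r)}.
\]
This is exactly the expression in \eqref{defect} with the minimum replaced by an infimum. The remark after \eqref{defect} shows that the infimum is attained: $L\mapsto\rho_e(B-h_eL)$ is Lipschitz and coercive on the finite-dimensional space $\mathcal L_e$. So the identity $\Omega_e(B)=\operatorname{dist}([B],\mathcal H_e)$ holds. The only minor point to check is that $\Omega_e$ depends on $B$ only through its coset, which follows from the well-definedness argument above.

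The main obstacle is bookkeeping rather than analysis. Matrix H\"older seminorms are taken entrywise-max, so I must confirm that this entrywise seminorm still satisfies the triangle inequality and homogeneity; it does, as a maximum of seminorms. I also need the lower bound $[h_eL]_{\beta,J_e(r)}\ge c\|L\|$ with a constant independent of $r$; this follows since $h_e$ is scalar with local seminorm exactly $1$.
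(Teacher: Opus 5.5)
Your argument is correct and is essentially the paper's: the local seminorms pass to the limit, $L\mapsto[h_eL]$ is an isometric embedding of $\mathcal L_e$ because $[h_e]_{\beta,J_e(r)}=1$ in the entrywise-max norm, and the distance identity is the definition of $\Omega_e$ once the minimum is known to exist. The only difference is that the paper also proves completeness of $\mathcal G_e/\mathcal N_e$ in the $\rho_e$-norm, via the cutoff estimate $\rho_e(B)\le\inf_{N\in\mathcal N_e}\|B-N\|_{A_\beta(J_e)}\le C_\beta\rho_e(B)$. You are right that completeness is not needed for the lemma as stated, since finite-dimensional subspaces are closed in any normed space, but it is what Corollary~\ref{cor:operator-germ} later uses to conclude that $\mathcal Y_e$ is a Banach space.
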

\begin{proof}
The local seminorms decrease as $r\downarrow0$, so their limit exists.
Their triangle inequality and homogeneity pass to the limit. Hence
$\rho_e$ is a seminorm and the quotient carries the asserted norm.
We also verify completeness in this norm. Let
\[
 d_e(B)=\inf_{N\in\mathcal N_e}\|B-N\|_{A_\beta(J_e)}.
\]
Since $\rho_e(N)=0$ for $N\in\mathcal N_e$ and
$\rho_e(H)\le\|H\|_{A_\beta(J_e)}$, we have
$\rho_e(B)\le d_e(B)$. To prove the converse, fix
$0<r<\eta/2$ and choose a smooth cutoff $\chi_r$ that equals
one on $J_e(r)$, vanishes outside $J_e(2r)$, and satisfies
$\operatorname{Lip}(\chi_r)\le C/r$. Then
$(1-\chi_r)B\in\mathcal N_e$. Since $B(e)=0$,
\[
 \|B\|_{\infty,J_e(2r)}
 \le (2r)^\beta[B]_{\beta,J_e(2r)},
 \qquad [\chi_r]_{\beta,J_e}\le C_\beta r^{-\beta}.
\]
Applying the H\"older product estimate on the support of
$\chi_r$, and the cutoff estimate across its boundary, gives
\[
 d_e(B)\le\|\chi_r B\|_{A_\beta(J_e)}
 \le C_\beta[B]_{\beta,J_e(2r)}.
\]
Here $\chi_r B\in A_\beta(J_e)$ because $A_\beta$ is an
algebra containing smooth functions. Letting $r\downarrow0$ yields
\[
 \rho_e(B)\le d_e(B)\le C_\beta\rho_e(B).
\]
Now $\mathcal G_e$ and $\mathcal N_e$ are closed in
$A_\beta(J_e;M_n)$; hence their usual Banach quotient is
complete. The displayed norm equivalence proves that
$\mathcal G_e/\mathcal N_e$ is complete also in the norm
induced by $\rho_e$.
For a constant matrix $L$, the identity
$[h_eL]_{\beta,J_e(r)}=\|L\|_{\mathrm{entry}}$ holds for all sufficiently
small $r$ (with the maximum-entry matrix norm). Consequently
$L\mapsto[h_eL]$ embeds $\mathcal L_e$ isometrically into the quotient,
so its image is finite dimensional and closed. The distance identity
is exactly the definition of $\Omega_e$.
\end{proof}

Lemma~\ref{lem:germ-reduction} expresses the endpoint defect as a quotient-space distance, as in general distance-formula methods including Perfekt~\cite{Perfekt2013} and Angrisani et al.~\cite{Angrisani2020}. The operator-norm bounds in Theorem~\ref{fullclosure} additionally use endpoint-compatible regular symbols and localized test vectors.

\begin{lemma}\label{bounded}
Every symbol satisfying \textup{(S1)--(S2)} defines a bounded operator $T_A$ on $\Xn$. Its endpoint action is given by
\[
 (T_AF)(e)=V_e(e)F(e),\qquad e=0,1.
\]
\end{lemma}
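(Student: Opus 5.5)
Fix $F\in\Xn$ and decompose the level interval $[0,1]$ into the two endpoint collars $J_0,J_1$ and the middle part $K=[\eta/2,1-\eta/2]$. On $K$, (S1) puts every entry of $A$ in $A_\beta(K)$, so $AF\in A_\beta(K;\mathbb C^n)$ by Proposition~\ref{prop:banach}, with norm at most $C\|A\|_{A_\beta(K)}\|F\|$. On a collar $J_e$ (away from $e$) I write
\[
 A F = A P_eF + A Q_eF = V_eF + \frac{B_e\,Q_eF}{h_e}.
\]
The first term lies in $A_\beta(J_e;\mathbb C^n)$ because $V_e\in M_n(A_\beta(J_e))$ by (S2). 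For the second term I use the endpoint condition $F(e)\in W_e$, i.e.\ $Q_eF(e)=0$. Since $Q_e$ is a constant matrix, $G=Q_eF\in A_\beta(J_e;\mathbb C^n)$ with $G(e)=0$, and $\|G\|\le C\|F\|$. By (S2) we also have $B_e(e)=0$. The weighted estimate \eqref{weighted}, applied entrywise to the products $(B_e)_{ik}G_k$, then shows that $B_eG/h_e$ extends by $0$ at $e$ to an element of $A_\beta(J_e;\mathbb C^n)$, with norm at most $C_\beta\|B_e\|\,\|F\|$.

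Hence $AF$ on $(0,1)$ extends to $[0,1]$ with $(AF)(e)=V_e(e)F(e)+0$, which is the stated endpoint formula. Since $V_e(e)W_e\subset W_e$ and $F(e)\in W_e$, we get $(T_AF)(e)\in W_e$, so $T_AF$ satisfies the endpoint constraints of $\Xn$.

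To glue the three local pieces I use a smooth partition of unity $\psi_0+\psi_K+\psi_1=1$ subordinate to $J_0$, $K^\circ$ and $J_1$. Each product $\psi\cdot(AF)$ belongs to $A_\beta([0,1])$, since multiplying by smooth cutoff functions preserves $A_\beta$. Alternatively one can use the finite gluing argument from Lemma~\ref{branchmodel}, which controls the restricted-variation condition after splitting admissible intervals at finitely many points. Summing the local bounds gives $\|T_AF\|\le C(\|A\|_{K}+\sum_e(\|V_e\|+\|B_e\|))\|F\|$, which proves boundedness.

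The main point needing care is the collar term $B_eQ_eF/h_e$. Here \eqref{weighted} is applied in its one-sided form on $J_e$ with weight $h_e(y)=|y-e|^\beta$, and both factors vanish at $e$: $B_e$ by (S2) and $Q_eF$ by the endpoint condition. This vanishing is why the endpoint constraint $F(e)\in W_e$ together with $Q_eP_e=0$ is needed. I will check that the decomposition $A=AP_e+AQ_e$ matches the definitions $V_e=AP_e$ and $B_e=h_eAQ_e$ exactly. The rest is routine.
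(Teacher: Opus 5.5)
Your proof is correct and follows essentially the same route as the paper. The paper also splits $F=P_eF+Q_eF$ near each endpoint, handles $V_eF$ by the product inequality and $B_eQ_eF/h_e$ by the weighted estimate \eqref{weighted} (both factors vanish at $e$), uses (S1) on the interior, and glues the three pieces to obtain $T_AF\in X_{\beta,n}$, the endpoint formula, and a uniform norm bound.
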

\begin{proof}
Near $e$ write $F=P_eF+Q_eF$. Since $F(e)\in W_e$, the vector $Q_eF$ vanishes at $e$. For $y\ne e$,
\[
 A(y)F(y)=V_e(y)F(y)+\frac{B_e(y)}{h_e(y)}Q_eF(y).
\]
The first term belongs to $A_\beta(J_e;\mathbb C^n)$ by the Banach algebra product inequality. Each component of the second belongs to $A_\beta(J_e)$ by \eqref{weighted} and vanishes at $e$. The endpoint value therefore lies in $W_e$ by \textup{(S2)}. On compact intervals away from $0,1$, matrix multiplication is bounded by \textup{(S1)}. Finite gluing over the three subintervals proves $T_AF\in\Xn$ and yields a uniform operator norm bound.
\end{proof}

\begin{lemma}\label{lem:symbol-recovery}
For $T_A$ as in Lemma~\ref{bounded}, fix a closed interval
$K\subset(0,1)$. There is a constant $C_{\beta,n,\eta,K}$ such that
\[
 \|A\|_{A_\beta(K)}+
 \sum_{e=0,1}\bigl(\|V_e\|_{A_\beta(J_e)}+
                          \|B_e\|_{A_\beta(J_e)}\bigr)
 \le C_{\beta,n,\eta,K}\,\|T_A\|_{\mathcal B(X_{\beta,n})}.
\]
The norms of matrix-valued functions are the maximum of the
entrywise $A_\beta$ norms.
\end{lemma}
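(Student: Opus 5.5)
The plan is to recover the symbol from the action of $T_A$ on finitely many explicit test vectors in $\Xn$, mirroring the scalar recovery $v=T1$, $b=\tfrac12((T1)h-Th)$ of Theorem~\ref{thm:algebra}. Fix once and for all smooth cutoffs $\chi_0,\chi_1,\chi_K$ on $[0,1]$: $\chi_e$ equals one on $J_e$ and vanishes near the other endpoint, and $\chi_K$ equals one on $K$ with support in a slightly larger compact subinterval of $(0,1)$. All norms of test vectors depend only on $\beta,n,\eta,K$, so each recovered quantity will be bounded by a constant times $\|T_A\|$.

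\emph{Interior part.} For each $k$, the vector $F=\chi_K\mathbf e_k$ vanishes at $0,1$, so it lies in $\Xn$. Moreover $T_AF=A\mathbf e_k$ on $K$. Restriction to $K$ does not increase the $A_\beta$ norm, so $\|A\mathbf e_k\|_{A_\beta(K)}\le\|T_A\|\,\|\chi_K\mathbf e_k\|$. Taking all $k$ bounds $\|A\|_{A_\beta(K)}$.

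\emph{Endpoint part.} Choose a basis $w_1,\dots,w_m$ of $W_e$ and a basis $z_1,\dots,z_p$ of $W_e^\perp=\operatorname{ran}Q_e$. The vectors $\chi_e w_i$ lie in $\Xn$, because their values at $e$ lie in $W_e$ and their values at the other endpoint vanish. On $J_e$ we have $T_A(\chi_ew_i)=Aw_i=V_ew_i$, since $P_ew_i=w_i$. Hence $\|V_ew_i\|_{A_\beta(J_e)}\le C\|T_A\|$, and expanding $P_e$ in these basis vectors bounds $\|V_e\|_{A_\beta(J_e)}=\|V_eP_e\|$.

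For the weighted part, use $G_j=\chi_e h_e z_j$. This vanishes at $e$, so $G_j\in\Xn$, and $h_e\in AC\cap C^\beta\subset A_\beta$ by Lemma~\ref{lem:ac}. On $J_e\setminus\{e\}$,
\[
T_AG_j=h_eAz_j=h_eAQ_ez_j=B_ez_j .
\]
Both sides are continuous at $e$ and vanish there, using $B_e(e)=0$ and Lemma~\ref{bounded}. So $\|B_ez_j\|_{A_\beta(J_e)}\le C\|T_A\|$. Since $B_e=B_eQ_e$, expanding $Q_e$ in the basis $z_j$ gives the bound on $\|B_e\|_{A_\beta(J_e)}$. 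Summing over $e=0,1$ and adding the interior estimate yields the stated inequality.

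\emph{Main obstacle.} The delicate point is not the analysis but the bookkeeping: the test vectors must actually lie in $\Xn$, meaning their endpoint values must lie in the right compatibility spaces, and $T_A$ applied to them must equal the claimed expression also at $e$, not just away from it. Lemma~\ref{bounded} gives this, because $(T_AF)(e)=V_e(e)F(e)$ and both sides are $A_\beta$ functions agreeing on a dense set.

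A second point is that the constant may depend on $K$ but not on $A$. This holds because every test vector is fixed in advance. The entrywise maximum norm is equivalent to operator action on basis vectors with constants depending only on $n$ and the chosen bases.
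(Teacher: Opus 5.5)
Your proposal is correct and is essentially the paper's proof. It uses the same three fixed families of test vectors: $\chi_e w$ with $w$ in a basis of $W_e$, $\chi_e h_e q$ with $q$ in a basis of $\operatorname{ran}Q_e$, and a cutoff equal to one on $K$ times $\mathbf e_j$. These are followed by restriction to $J_e$ or $K$ and expansion of $P_e$, $Q_e$ in the chosen bases, and your explicit endpoint check via Lemma~\ref{bounded} and $B_e(e)=0$ fills in a detail the paper leaves implicit.
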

\begin{proof}
Choose smooth functions $\chi_e$ equal to one on $J_e$ and zero
near the opposite endpoint. For $w\in W_e$, the test function
$F_w=\chi_e w$ belongs to $X_{\beta,n}$, and
$(T_AF_w)|_{J_e}=V_e w$. Choose a fixed basis of $W_e$.
The $A_\beta$ norms of the resulting output functions control
all columns of $V_e=A P_e$, up to a constant depending only on $n$
and the fixed cutoffs.
Likewise, for a fixed basis $q$ of $\operatorname{ran}Q_e$,
$F_q=\chi_e h_e q$ lies in $X_{\beta,n}$ and
$(T_AF_q)|_{J_e}=B_e q$. These outputs recover $B_e=B_e Q_e$
with the same type of bound.
Finally, select a smooth cutoff $\theta_K$ supported in $(0,1)$
and equal to one on $K$. For the standard basis vectors
$\mathbf e_j$, the functions $\theta_K\mathbf e_j$ belong to
$X_{\beta,n}$, and
$(T_A(\theta_K\mathbf e_j))|_K=A\mathbf e_j|_K$.
Boundedness of restriction to a subinterval and finiteness of the
selected bases give the stated estimate.
\end{proof}

\begin{lemma}\label{lem:closure-cutoff}
Let $e\in\{0,1\}$, $0<r<\eta/4$, and let $\chi_{e,r}$
be a Lipschitz cutoff which equals one on $J_e(r)$, vanishes outside
$J_e(2r)$, and satisfies $\operatorname{Lip}(\chi_{e,r})\le C/r$.
If $d\in A_\beta(J_e;M_n(\mathbb C))$ and $d(e)=0$, then
\begin{equation}\label{eq:uniform-cutoff}
 \|\chi_{e,r}d\|_{A_\beta(J_e)}
 \le C_{\beta,n}[d]_{\beta,J_e(2r)}.
\end{equation}
If $F\in X_{\beta,n}$, then, with $Q_eF(e)=0$,
\begin{equation}\label{eq:uniform-endpoint-operator}
 \left\|\frac{\chi_{e,r}d}{h_e}Q_eF\right\|_{A_\beta(J_e)}
 \le C_{\beta,n}[d]_{\beta,J_e(2r)}\|F\|_{X_{\beta,n}}.
\end{equation}
The two expressions are extended by zero where necessary; the
constants are independent of $r,d$, and $F$.
\end{lemma}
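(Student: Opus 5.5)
The plan is to prove \eqref{eq:uniform-cutoff} by the same cutoff argument used in the proofs of Theorem~\ref{thm:distance} and Lemma~\ref{lem:germ-reduction}, working entrywise. Then \eqref{eq:uniform-endpoint-operator} follows from the one-sided weighted estimate \eqref{weighted}, applied with $\chi_{e,r}d$ in place of $b$.

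For \eqref{eq:uniform-cutoff}, write $H=[d]_{\beta,J_e(2r)}$. Since $d(e)=0$, we have $|d(y)|\le H|y-e|^\beta\le H(2r)^\beta$ on $J_e(2r)$, so $\|\chi_{e,r}d\|_\infty\le C r^\beta H\le CH$.

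For the H\"older seminorm, take $s,t\in J_e(2r)$ and expand
\[
\chi_{e,r}(t)d(t)-\chi_{e,r}(s)d(s)=\chi_{e,r}(t)\bigl(d(t)-d(s)\bigr)+d(s)\bigl(\chi_{e,r}(t)-\chi_{e,r}(s)\bigr).
\]
This is bounded by $H|t-s|^\beta+C H r^{\beta-1}|t-s|$. Since $|t-s|\le 2r$, the second term is at most $C' H|t-s|^\beta$. If one point lies outside $J_e(2r)$, compare it with the boundary point of $J_e(2r)$ lying between $s$ and $t$, where the product vanishes. If both points lie outside, the increment is zero.

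For restricted variation, membership in $A_\beta(J_e)$ follows from Proposition~\ref{prop:banach}, because the Lipschitz cutoff lies in $A_\beta$. The statement only asks for a norm bound, and the $A_\beta(J_e)$ norm is the H\"older norm. So \eqref{eq:uniform-cutoff} follows with a constant independent of $r$ and $d$, after taking the maximum over the $n^2$ entries.

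For \eqref{eq:uniform-endpoint-operator}, set $b=\chi_{e,r}d$, which vanishes at $e$. Set $g=Q_eF$, which lies in $A_\beta(J_e;\mathbb C^n)$ and vanishes at $e$ because $F(e)\in W_e$. Each component of $(b/h_e)g$ is a finite sum of terms $b_{ik}g_k/h_e$. The one-sided estimate \eqref{weighted} on $J_e$ gives
\[
\|b_{ik}g_k/h_e\|_{A_\beta(J_e)}\le C_\beta\|b_{ik}\|_{A_\beta(J_e)}\|g_k\|_{A_\beta(J_e)}.
\]
Combining this with \eqref{eq:uniform-cutoff} and with $\|Q_eF\|\le C_n\|F\|_{X_{\beta,n}}$ yields the claim. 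The last inequality holds because $Q_e$ is a fixed matrix and restriction to $J_e$ is bounded.

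The only point requiring care is that the constant in \eqref{weighted} is genuinely independent of the functions. Its proof uses only $[b]_\beta$, $[g]_\beta$ and the vanishing at $e$, so it is uniform in $r$. I expect no real obstacle beyond checking the cross-boundary case in the cutoff estimate.
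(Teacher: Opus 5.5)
Your proposal is correct and follows essentially the same route as the paper. Both use the entrywise cutoff estimate built from $|d(y)|\le [d]_{\beta,J_e(2r)}|y-e|^\beta$ and $\operatorname{Lip}(\chi_{e,r})\le C/r$, and then apply the one-sided weighted estimate \eqref{weighted} componentwise to $\chi_{e,r}d$ and $Q_eF$. The paper splits the H\"older increment by $|x-y|\le r$ versus $|x-y|>r$ instead of by position relative to $J_e(2r)$. It also adds that extending by zero past the far end of $J_e$ costs only a fixed constant, and your boundary-point comparison handles that case in the same way.
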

\begin{proof}
Put $D_r=[d]_{\beta,J_e(2r)}$. Since $d(e)=0$,
$\sup_{J_e(2r)}\|d(y)\|\le(2r)^\beta D_r$.
For $x,y\in J_e$, the increment of $\chi_{e,r}d$ is bounded by
$D_r|x-y|^\beta+C r^{\beta-1}D_r|x-y|$ when
$|x-y|\le r$, using the cutoff Lipschitz bound and the
vanishing outside $J_e(2r)$. This is at most
$C_\beta D_r|x-y|^\beta$. For $|x-y|>r$ use the
supremum bound $C_\beta r^\beta D_r$ at both points.
This proves \eqref{eq:uniform-cutoff}, including its supremum term.
Membership in $A_\beta$ follows because $d\in A_\beta$ and the
cutoff is Lipschitz. Every component of $Q_eF$ belongs to
$A_\beta(J_e)$ and vanishes at $e$. Applying the one-sided
weighted multiplier estimate \eqref{weighted} componentwise to
$\chi_{e,r}d$ and $Q_eF$ gives
\eqref{eq:uniform-endpoint-operator}. Because the cutoff vanishes
before the other end of $J_e$, extension by zero has a fixed norm
bound independent of $r$. This also follows directly by splitting
crossing intervals at that end; their total length does not
increase and their $\beta$-power budget increases by at most
$2^{1-\beta}$.
\end{proof}

\begin{theorem}\label{fullclosure}
Let $T_A$ be a bounded fiber-matrix operator on $\Xn$ with a symbol satisfying \textup{(S1)--(S2)}. There are constants $0<c_{\beta,n,\eta}\le C_{\beta,n,\eta}<\infty$, independent of $A$, such that
\begin{equation}\label{mainbound}
 c_{\beta,n,\eta}\max\{\Omega_0(B_0),\Omega_1(B_1)\}
 \le \dist\bigl(T_A,\Rn\bigr)
 \le C_{\beta,n,\eta}\max\{\Omega_0(B_0),\Omega_1(B_1)\}.
\end{equation}
Here operator distance is taken in $\mathcal B(\Xn)$, equivalently in $\mathcal B(A_\beta)$ under $U$. In particular,
\begin{equation}\label{complete}
\boxed{\quad\overline{\Rn}^{\,\|\cdot\|}
=\{T_A:\ A\text{ satisfies \textup{(S1)--(S2)} and }
\Omega_0(B_0)=\Omega_1(B_1)=0\}.\quad}
\end{equation}

\end{theorem}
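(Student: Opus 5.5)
The plan follows the two one-dimensional prototypes, Theorem~\ref{thm:distance} and Theorem~\ref{thm:fold-distance}, now run in the branch model of Lemma~\ref{branchmodel}. Under that lemma, $\Rn$ consists of the multiplication operators $T_S$ whose symbols $S\in M_n(A_\beta([0,1]))$ satisfy the individual row-difference condition \eqref{regularsymbol}. The proof therefore has three parts: a lower bound obtained by testing against fixed endpoint functions, an upper bound obtained by endpoint cutoff surgery, and the closure identity \eqref{complete}, which comes from the bounds together with closedness of the (S1)--(S2) class.

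\emph{Lower bound.} Fix $e$ and a regular symbol $S$. Since $S$ is continuous at $e$, it satisfies (S2) with $B_e^S=h_eSQ_e$. Moreover, $L_S:=S(e)Q_e$ lies in $\mathcal L_e$. To see this, note that $L_SQ_e=L_S$ trivially. For a fold $j$ at level $e$, condition \eqref{regularsymbol} gives $r_jS(e)=\lambda_jr_j$, hence $r_jS(e)Q_e=\lambda_jr_jQ_e=\lambda_jr_j$, because $r_j\in W_e^\perp$.

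Next, $h_e(S-S(e))Q_e$ has vanishing local seminorm at $e$, exactly as in Theorem~\ref{thm:distance}. Hence $\rho_e(B_e^S-h_eL_S)=0$, and
\[
\Omega_e(B_e)\le\rho_e\bigl(B_e-B_e^S\bigr).
\]
Now apply Lemma~\ref{lem:symbol-recovery} to the difference $T_A-T_S$, whose symbol satisfies (S1)--(S2) with $B$-part $B_e-B_e^S$. This gives
\[
\rho_e(B_e-B_e^S)\le\|B_e-B_e^S\|_{A_\beta(J_e)}\le C\|T_A-T_S\|.
\]
Taking the infimum over $S$ yields the lower bound.

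\emph{Upper bound.} Choose minimizers $L_e\in\mathcal L_e$ and set $d_e=B_e-h_eL_e$, so that $d_e(e)=0$. For small $r$, define the candidate symbol $S_r$ as follows.
\begin{itemize}
\item Near $e$, put
\[
S_r=V_e+L_e+(1-\chi_{e,r})\frac{d_e}{h_e}Q_e .
\]
\item Away from both endpoints, put $S_r=A$.
\end{itemize}
These pieces agree where the cutoffs equal zero, because there $A=V_e+B_e/h_e=V_e+L_e+(d_e/h_e)Q_e$ (using $L_e=L_eQ_e$). The quotient term is supported away from $e$, so it lies in $A_\beta$; hence $S_r\in M_n(A_\beta)$.

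At $e$ we have $S_r(e)=V_e(e)+L_e$. We must check \eqref{regularsymbol} at each fold $j$. For $L_e$ this holds by the definition of $\mathcal L_e$. The key point is to verify that $r_jV_e(e)\in\mathbb C r_j$. Since $V_e(e)=V_e(e)P_e$ and $V_e(e)W_e\subset W_e$, the row $r_jV_e(e)$ annihilates $W_e$ and vanishes on $W_e^\perp$, so in fact $r_jV_e(e)=0$. By Lemma~\ref{branchmodel}, $T_{S_r}\in\Rn$.

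The difference is
\[
T_A-T_{S_r}=\sum_e M_{\chi_{e,r}d_e/h_e}Q_e .
\]
By \eqref{eq:uniform-endpoint-operator} its norm is at most $C\max_e[d_e]_{\beta,J_e(2r)}$, and letting $r\downarrow0$ gives the upper bound with constant independent of $A$.

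\emph{Closure identity.} The inclusion ``$\supseteq$'' in \eqref{complete} follows from the upper bound. For ``$\subseteq$'', suppose $T_{S_k}\to T$ in norm with $S_k$ regular. Lemma~\ref{lem:symbol-recovery}, applied to the differences $T_{S_k}-T_{S_m}$, shows that $A_k|_K$, $V_e^k$ and $B_e^k$ are Cauchy in their respective spaces. Their limits define a symbol $A$ satisfying (S1)--(S2): the closed conditions $B_e(e)=0$ and $V_e(e)W_e\subset W_e$ pass to the limit. Evaluating pointwise shows $T=T_A$, and the lower bound then forces $\Omega_e(B_e)=0$.

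I expect the main obstacle to be the endpoint bookkeeping in the upper bound. One has to confirm that $V_e(e)+L_e$ satisfies the individual row conditions rather than merely $W_e$-invariance; this is where $\mathcal L_e$ (and not all of $W_e^\perp$) is needed. One also has to confirm that the constants in the cutoff and the gluing of Lemma~\ref{branchmodel} are independent of $r$ and $A$.
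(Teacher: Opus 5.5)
Your proposal is correct and follows the paper's proof essentially step for step: the same endpoint surgery $V_e+L_e+(1-\chi_{e,r})(AQ_e-L_e)$, checked fold by fold via $r_jV_e(e)=0$ and $r_jL_e\in\mathbb C r_j$, and the same lower-bound mechanism using $D(e)Q_e\in\mathcal L_e$ together with the vanishing germ of $h_e(D-D(e))Q_e$. The only cosmetic difference is that you route the test-function estimates (and the recovery of the limit symbol in the closure step) through Lemma~\ref{lem:symbol-recovery} instead of applying the test vectors $\chi_e h_e q$ and $\chi_e w$ directly. That lemma encodes exactly the same computation.
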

\begin{proof}
We first prove the upper estimate. Choose matrices $L_e\in\mathcal L_e$ attaining the minima in \eqref{defect}, and smooth cutoffs $\chi_{e,r}$, supported in $J_e(2r)$ and equal to one in $J_e(r)$, with Lipschitz constants at most $C/r$. Take $r<\eta/4$ so their supports are disjoint. Set, on $J_e\setminus\{e\}$,
\[
 A_r(y)=A(y)-\frac{\chi_{e,r}(y)}{h_e(y)}
                 (B_e(y)-h_e(y)L_e),
\]
and let $A_r=A$ outside the two cutoff supports. Since $B_e(y)=h_e(y)A(y)Q_e$, this becomes near $e$
\[
 A_r(y)=V_e(y)+L_e+(1-\chi_{e,r}(y))
                        \bigl(A(y)Q_e-L_e\bigr).
\]
The last term vanishes near $e$ and is $A_\beta$ away from $e$. Thus $A_r$ extends to a matrix in $M_n(A_\beta([0,1]))$. Also $A_r(e)=V_e(e)+L_e$. Fix any fold
$j\in\mathcal J_e$, with $r_j$ as defined above. Since
$V_e(e)=V_e(e)P_e$ and $V_e(e)W_e\subset W_e$, its range is
contained in $W_e$; thus $r_jV_e(e)=0$. Independently for this
particular $j$, the definition $L_e\in\mathcal L_e$ gives
$r_jL_e=\lambda_{e,j}r_j$ for some scalar $\lambda_{e,j}$. Therefore
\[
 r_jA_r(e)=r_jV_e(e)+r_jL_e=\lambda_{e,j}r_j
       \qquad\text{for every }j\in\mathcal J_e.
\]
This verifies \eqref{regularsymbol} fold by fold, even when several
folds occur at the same level. Merely verifying
$A_r(e)W_e\subset W_e$ would not be enough.
By Lemma~\ref{branchmodel}, $T_{A_r}\in\Rn$.

Write $d_e=B_e-h_eL_e$, which vanishes at $e$. Its cutoff satisfies
\[
 \|\chi_{e,r}d_e\|_{A_\beta(J_e)}
 \le C_\beta[d_e]_{\beta,J_e(2r)};
\]
this follows from the ordinary H\"older cutoff estimate and $|d_e(y)|\le [d_e]_{\beta,J_e(2r)}|y-e|^\beta$. Lemma~\ref{lem:closure-cutoff} applies to the matrix $d_e$ and to $Q_eF$; in particular the bound is uniform as $r\downarrow0$. Therefore
\[
 \|T_A-T_{A_r}\|\le C_{\beta,n}
       \max_{e=0,1}[d_e]_{\beta,J_e(2r)}.
\]
Letting $r\downarrow0$ proves the right-hand inequality in \eqref{mainbound}.

For the lower bound fix an endpoint $e$ and a cutoff $\chi_e$ which equals one on $J_e$ and vanishes near the other endpoint. For each basis vector $q$ of $\operatorname{ran}Q_e$, the vector function
\[
 F_q(y)=\chi_e(y)h_e(y)q
\]
belongs to $\Xn$, and its norm is bounded by a constant depending only on the fixed cutoff and $n,\beta$. On $J_e$, $T_AF_q=B_e q$. For a regular symbol $D$ satisfying \eqref{regularsymbol}, the corresponding $B_e^D=h_eDQ_e$ has
\[
 \Omega_e(B_e^D)=0,
\]
for the following two separate reasons. First, at every
$j\in\mathcal J_e$ the regular-symbol identity gives
$r_jD(e)=\lambda_jr_j$. As $r_jQ_e=r_j$, the matrix
$L=D(e)Q_e$ satisfies $L=LQ_e$ and
$r_jL=\lambda_jr_j$; hence $L\in\mathcal L_e$.
Second, since $D$ is $\beta$-H\"older, the ordinary product
seminorm estimate yields
\[
 [h_e(D-D(e))Q_e]_{\beta,J_e(r)}
 \le C_n r^\beta[D]_{\beta,J_e(r)}
 \longrightarrow0
\qquad(r\downarrow0).
\]
Here $C_n$ only accounts for the fixed matrix dimensions and the
entrywise norm. Thus the admissible
matrix $D(e)Q_e$, not a general endpoint-preserving matrix, removes
the local germ of $B_e^D$. Moreover, for each fixed basis vector $q$
of $\operatorname{ran}Q_e$, the test function $F_q=\chi_eh_eq$
gives
$\|(B_e-B_e^D)q\|_{A_\beta(J_e)}
\le C_{\beta,n}\|T_A-T_D\|$. Summing over the finitely many
basis vectors controls the matrix-valued germ seminorm. Using the
triangle inequality for the defect gives
\[
 \Omega_e(B_e)\le C_{\beta,n}\|T_A-T_D\|.
\]
Take the infimum over $T_D\in\Rn$ and then the maximum over $e$
to obtain the left-hand inequality. The restriction
$D(e)Q_e\in\mathcal L_e$ is essential: replacing
$\mathcal L_e$ by all matrices satisfying only endpoint preservation
would erase the cross-fold obstruction and invalidate this lower-bound
argument for the distance to the \emph{regular} algebra.

It remains to prove that the displayed class is \emph{all} of the closure. Let $T$ be a norm limit of regular operators $T_{A_m}$. At every $y\in(0,1)$ the evaluation $E_y:\Xn\to\mathbb C^n$, $E_yF=F(y)$, is onto: use smooth test functions supported in $(0,1)$ that interpolate a prescribed fiber vector. Since $E_yT_{A_m}=A_m(y)E_y$, the matrices $A_m(y)$ converge to a unique $A(y)$ with $E_yT=A(y)E_y$. To see \textup{(S1)}, use cutoffs supported in $(0,1)$ times the constant coordinate vectors; their $T$-images agree locally with the columns of $A(y)$ and belong to $A_\beta$.

For \textup{(S2)}, let $\chi_e$ be a cutoff equal to one near $e$ and zero near the opposite endpoint. For each $w\in W_e$, the vector function $F_w(y)=\chi_e(y)w$ belongs to $\Xn$. Its $T$-image agrees with $A(y)w$ near $e$, so $A(y)P_e$ extends into $A_\beta$ and has an endpoint value mapping $W_e$ into $W_e$. For each $q\in\operatorname{ran}Q_e$, the test function $F_q=\chi_eh_eq$ used above shows that $h_eA(y)q$ extends into $A_\beta$. Furthermore $(T_{A_m}F_q)(e)=0$ for every $m$, so by norm convergence $(TF_q)(e)=0$. Hence $B_e(e)=0$. The lower-bound argument, now applied to $T$ and $T_{A_m}$, gives $\Omega_e(B_e)=0$ because $\|T-T_{A_m}\|\to0$. Thus every member of the closure has a symbol in the right side of \eqref{complete}. The upper-bound argument proves the converse.
\end{proof}

\begin{corollary}\label{cor:operator-germ}
Let $\mathfrak M_{\beta,n}$ be the space of all bounded fiber-matrix
operators $T_A$ on $X_{\beta,n}$ whose symbols satisfy
\textup{(S1)--(S2)}, with the operator norm. With the notation of
Lemma~\ref{lem:germ-reduction}, put
\[
 \mathcal Y_e=(\mathcal G_e/\mathcal N_e)/\mathcal H_e,
 \qquad e=0,1.
\]
Then $\mathfrak M_{\beta,n}$ is a Banach space and
\[
 \Gamma:\mathfrak M_{\beta,n}\longrightarrow
       \mathcal Y_0\oplus_\infty\mathcal Y_1,
 \qquad
 \Gamma(T_A)=\bigl([B_0]+\mathcal H_0,
                         [B_1]+\mathcal H_1\bigr)
\]
is a bounded linear map satisfying
\[
 \ker\Gamma=\overline{\mathscr R_{\beta,n}}^{\,\|\cdot\|}.
\]
Its induced map
\[
 \widetilde\Gamma:
 \mathfrak M_{\beta,n}/\overline{\mathscr R_{\beta,n}}^{\,\|\cdot\|}
 \longrightarrow\Gamma(\mathfrak M_{\beta,n})
\]
is a Banach-space isomorphism onto a closed subspace of
$\mathcal Y_0\oplus_\infty\mathcal Y_1$. More precisely,
\[
 c_{\beta,n,\eta}\|\widetilde\Gamma([T_A])\|
 \le \|[T_A]\|
 \le C_{\beta,n,\eta}\|\widetilde\Gamma([T_A])\|.
\]

\end{corollary}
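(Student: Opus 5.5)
We will assemble the corollary from Theorem~\ref{fullclosure}, Lemma~\ref{lem:germ-reduction} and Lemma~\ref{lem:symbol-recovery}. The quotient $\mathcal G_e/\mathcal N_e$ is complete by Lemma~\ref{lem:germ-reduction}, and $\mathcal H_e$ is a finite-dimensional, hence closed, subspace. So $\mathcal Y_e$ is a Banach space, and by the distance identity in that lemma its quotient norm of $[B_e]+\mathcal H_e$ is exactly $\Omega_e(B_e)$. Consequently $\|\Gamma(T_A)\|=\max\{\Omega_0(B_0),\Omega_1(B_1)\}$.

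\emph{Step 1: linearity and boundedness of $\Gamma$.} The symbol $A$ of $T_A$ is uniquely determined: evaluation $E_y$ is onto for $0<y<1$, and $E_yT_A=A(y)E_y$. Hence $A\mapsto B_e=h_eAQ_e$ is linear, and so is $\Gamma$. For boundedness, Lemma~\ref{lem:symbol-recovery} gives $\|B_e\|_{A_\beta(J_e)}\le C\|T_A\|$. Since $\rho_e(B)\le\|B\|_{A_\beta(J_e)}$ and the distance to $\mathcal H_e$ is at most the norm of $[B_e]$, we obtain $\|\Gamma(T_A)\|\le C\|T_A\|$.

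\emph{Step 2: $\mathfrak M_{\beta,n}$ is a Banach space.} It is a linear subspace of $\mathcal B(\Xn)$: conditions (S1)--(S2) are linear in $A$, with $B_e(e)=0$ and $V_e(e)W_e\subset W_e$ both preserved under sums. For closedness, take $T_{A_m}\to T$ in operator norm. The argument in the last paragraph of the proof of Theorem~\ref{fullclosure} applies verbatim. It uses only that the $T_{A_m}$ satisfy (S1)--(S2), and does not use regularity of their symbols. It produces a symbol $A$ with $E_yT=A(y)E_y$ satisfying (S1), extension of $V_e$, $B_e$ into $A_\beta$, $B_e(e)=0$ and $V_e(e)W_e\subset W_e$. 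We must also check that $T=T_A$. Both operators have the same fiber action on $(0,1)$, and elements of $\Xn$ are continuous, so they agree everywhere. Hence $\mathfrak M_{\beta,n}$ is closed in $\mathcal B(\Xn)$.

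\emph{Step 3: kernel and the two-sided estimate.} By the identity $\|\Gamma(T_A)\|=\max_e\Omega_e(B_e)$ and \eqref{complete}, we get $\ker\Gamma=\overline{\Rn}^{\,\|\cdot\|}$. Moreover, the quotient norm satisfies $\|[T_A]\|=\dist(T_A,\overline{\Rn})=\dist(T_A,\Rn)$. Inequality \eqref{mainbound} then reads
\[
 c_{\beta,n,\eta}\|\widetilde\Gamma([T_A])\|\le\|[T_A]\|\le C_{\beta,n,\eta}\|\widetilde\Gamma([T_A])\|.
\]
So $\widetilde\Gamma$ is well defined, injective and bounded below. Because $\mathfrak M_{\beta,n}/\overline{\Rn}$ is complete, by Step 2, and $\widetilde\Gamma$ is bounded below, its range $\Gamma(\mathfrak M_{\beta,n})$ is closed. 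This gives the Banach-space isomorphism.

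The only nonformal point is Step 2. There we need the closure argument of Theorem~\ref{fullclosure} for limits of general (S1)--(S2) operators rather than regular ones, and we must identify the limit with $T_A$ at the endpoints. This follows from continuity of elements of $\Xn$, together with Lemma~\ref{bounded} for the endpoint action. Everything else is bookkeeping of earlier estimates.
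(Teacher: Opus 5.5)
Your proposal is correct and follows essentially the same route as the paper. You get completeness of $\mathcal Y_e$ from Lemma~\ref{lem:germ-reduction}, and closedness of $\mathfrak M_{\beta,n}$ by rerunning the test-function argument from the last paragraph of the proof of Theorem~\ref{fullclosure} for general (S1)--(S2) approximants. The kernel identity, the two-sided estimate and the closed range then come from \eqref{complete} and \eqref{mainbound}.

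The only deviation is minor. You obtain boundedness of $\Gamma$ from Lemma~\ref{lem:symbol-recovery}, whereas the paper reads it off the lower bound in \eqref{mainbound}, using $\operatorname{dist}(T_A,\mathscr R_{\beta,n})\le\|T_A\|$ because $0\in\mathscr R_{\beta,n}$.
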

\begin{proof}
The local seminorm $\rho_e$ is bounded by the $A_\beta$ norm;
therefore $\mathcal N_e$ is closed in $\mathcal G_e$.
The subspace $\mathcal H_e$ is finite dimensional, hence closed,
so each $\mathcal Y_e$ is a Banach space.

To see completeness of $\mathfrak M_{\beta,n}$, consider an
operator-norm convergent sequence of its members. Interior fiber
evaluations and cutoff test functions recover the limiting symbol
and condition \textup{(S1)}. For $w\in W_e$, the test functions
$\chi_e w$ recover $A P_e$ and its compatible endpoint value.
For $q\in\operatorname{ran}Q_e$, the functions $\chi_e h_e q$
recover $h_e A Q_e$ with endpoint value zero.
These are exactly the tests used in the last paragraph of the
proof of Theorem~\ref{fullclosure}, and establish \textup{(S2)}
without invoking the vanishing-defect condition.
Thus $\mathfrak M_{\beta,n}$ is closed in $\mathcal B(X_{\beta,n})$.

The two components of $\Gamma$ are linear and, by
Lemma~\ref{lem:germ-reduction},
\[
 \|\Gamma(T_A)\|=\max_{e=0,1}\Omega_e(B_e).
\]
Theorem~\ref{fullclosure} now gives
\[
 c_{\beta,n,\eta}\|\Gamma(T_A)\|
 \le\operatorname{dist}(T_A,\mathscr R_{\beta,n})
 \le C_{\beta,n,\eta}\|\Gamma(T_A)\|.
\]
Since the regular algebra contains zero, the first inequality
also proves continuity of $\Gamma$. Its kernel is the operator-norm
closure by \eqref{complete}. The displayed inequalities descend
to the quotient and show that $\widetilde\Gamma$ has a bounded
inverse on its range. The domain is complete, so the range is closed.
\end{proof}

\noindent\textit{Remark.} Lemma~\ref{lem:germ-reduction} expresses the endpoint defect as a quotient-space distance of the type studied in general distance-formula methods, including Perfekt~\cite[Theorem~1.3]{Perfekt2013} and Angrisani et al.~\cite{Angrisani2020}. It does not by itself imply the operator-norm bounds in Theorem~\ref{fullclosure}. The additional input there is the comparison with operator-norm distance: the upper bound uses endpoint-compatible regular symbols, and the lower bound uses test vectors $\chi_e h_e q$.

\begin{proposition}\label{crossfold}
For $n=4$ there is a bounded fiber-matrix operator commuting with $M_{u_4}$ whose symbol is a constant matrix, but which does not belong to $\overline{\mathscr R_{\beta,4}}$. In particular, endpoint preservation $A(e)W_e\subset W_e$ alone is not the correct regular-symbol condition when several folds share a level.
\end{proposition}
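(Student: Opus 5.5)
The plan is an explicit example: take $n=4$, write down a rank-one constant symbol whose row difference at one fold points in the difference direction of a \emph{different} fold at the same level, and then use the lower bound of Theorem~\ref{fullclosure}.

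\emph{Fold structure.} For $n=4$ the tent map gives $u_4(1/4)=1$, $u_4(1/2)=0$ and $u_4(3/4)=1$. Hence $\mathcal J_1=\{1,3\}$ and $\mathcal J_0=\{2\}$. The endpoint spaces are
\[
W_1=\{z:z_1=z_2,\ z_3=z_4\},\qquad W_0=\{z:z_2=z_3\},
\]
so $W_1^\perp=\operatorname{span}\{r_1,r_3\}$ and $W_0^\perp=\mathbb C r_2$.

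\emph{Choice of symbol.} I take the constant matrix $A=\mathbf e_1 r_3$, that is, $Az=(z_3-z_4)\mathbf e_1$.
\begin{itemize}
\item[(a)] If $z\in W_1$, then $Az=0\in W_1$.
\item[(b)] Since $\mathbf e_1$ has equal second and third coordinates, $A$ maps all of $\mathbb C^4$ into $W_0$. In particular $AW_0\subset W_0$.
\item[(c)] Conditions (S1)--(S2) hold trivially. $A$ is constant, $V_e=AP_e$ is constant with $V_e(e)W_e\subset W_e$, and $B_e=h_eAQ_e$ is a constant matrix times $h_e\in A_\beta(J_e)$, so it vanishes at $e$.
\end{itemize}
By Lemma~\ref{bounded}, $T_A$ is bounded on $X_{\beta,4}$. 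Transported by the isomorphism $U$ of Lemma~\ref{branchmodel}, it becomes a bounded operator on $A_\beta$. It commutes with $M_{u_4}$ because $U M_{u_4}U^{-1}$ is multiplication by the scalar $y$ on every fiber, and this commutes with any fiber matrix.

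\emph{Positive defect at $e=1$.} Because $r_3\in W_1^\perp$, we have $r_3Q_1=r_3$ and therefore $AQ_1=A$. Let $L\in\mathcal L_1$. Then $r_1L=\lambda r_1$ for some scalar $\lambda$, while $r_1A=(1-0)r_3=r_3$. Hence
\[
r_1(A-L)=r_3-\lambda r_1=(-\lambda,\lambda,1,-1).
\]
This row has an entry of modulus $1$, so some entry of $A-L$ has modulus at least $1/2$. In the entrywise maximum norm, $\|A-L\|\ge 1/2$ for every $L\in\mathcal L_1$.

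Since $A-L$ is constant, $[h_1(A-L)]_{\beta,J_1(r)}=\|A-L\|$ for all small $r$; this is the identity used in Lemma~\ref{lem:germ-reduction}. Taking the minimum over $L$ gives $\Omega_1(B_1)\ge 1/2$. The lower bound in Theorem~\ref{fullclosure} then yields
\[
\dist(T_A,\mathscr R_{\beta,4})\ge c_{\beta,4,\eta}/2>0,
\]
so $T_A\notin\overline{\mathscr R_{\beta,4}}$.

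\emph{Why endpoint preservation is not enough.} $A$ preserves $W_0$ and $W_1$, so the endpoint-preservation condition holds. What fails is the individual fold condition $r_1A\in\mathbb C r_1$ in \eqref{regularsymbol}, because $r_1A=r_3$. This is exactly the cross-fold obstruction, and it proves the last assertion of the proposition.

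The only delicate step is the defect computation. One must check that $\mathcal L_1$ cannot absorb $r_3$ through the row $r_1$. This is where the fold-by-fold definition of $\mathcal L_e$, rather than mere membership of the row in $W_e^\perp$, is used.
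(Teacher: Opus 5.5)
Your proof is correct and is essentially the paper's argument. It uses the same constant matrix $A=\mathbf e_1 r_3$, the same checks $AW_1=\{0\}$ and $AW_0\subset W_0$, and the same row computation giving $\Omega_1(B_1)\ge 1/2$ (the paper shows equality).

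The only difference is the last step. Besides the lower bound of Theorem~\ref{fullclosure}, the paper also estimates the distance directly with the test function $F(y)=y(1-y)^\beta(\mathbf e_3-\mathbf e_4)/\sqrt2$, which yields the explicit bound $\operatorname{dist}(T_A,\mathscr R_{\beta,4})\ge 1/3$.
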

\begin{proof}
Here $W_1=\{z:z_1=z_2,\ z_3=z_4\}$, while $W_0=\{z:z_2=z_3\}$. Let $A$ be the constant $4\times4$ matrix whose first row is $(0,0,1,-1)$ and whose other rows are zero. Then $A(W_1)=\{0\}$ and $A(W_0)\subset W_0$, so $F\mapsto AF$ is bounded on $X_{\beta,4}$. But the first and second rows of $A$ differ by $\mathbf e_3^{\mathsf T}-\mathbf e_4^{\mathsf T}$, which is not a multiple of $\mathbf e_1^{\mathsf T}-\mathbf e_2^{\mathsf T}$. For the endpoint $e=1$, $AP_1=0$ and $B_1(y)=h_1(y)AQ_1=h_1(y)A$. Since $[h_1]_{\beta,J_1(r)}=1$,
\[
 \Omega_1(B_1)=\min_{L\in\mathcal L_1}\|A-L\|=\frac12.
\]
Indeed, the first two rows of every $L\in\mathcal L_1$ have
identical third entries. Since $A_{13}-A_{23}=1$, the
maximum-entry norm of $A-L$ is at least $1/2$. Equality is
attained when the first two rows of $L$ both equal
$(0,0,1/2,-1/2)$ and the remaining rows vanish; this matrix
belongs to $\mathcal L_1$.
We can estimate the operator-norm distance directly. Put
\[
 q_1=\frac{\mathbf e_1-\mathbf e_2}{\sqrt2},\qquad
 q_3=\frac{\mathbf e_3-\mathbf e_4}{\sqrt2},\qquad
 F(y)=y(1-y)^\beta q_3.
\]
Then $F\in X_{\beta,4}$ vanishes at both endpoints and
$\|F\|_{X_{\beta,4}}\le3/\sqrt2$. For each regular symbol $D$, the separate fold-row condition at $e=1$ gives $q_1^*D(1)q_3=0$, whereas $q_1^*Aq_3=1$. Let $G=(T_A-T_D)F$. Then $G(1)=0$ and
\[
 \lim_{y\uparrow1}\frac{q_1^*G(y)}{(1-y)^\beta}=1.
\]
Thus the scalar H\"older seminorm and the componentwise maximum norm give
\[
 1\le[q_1^*G]_\beta\le\sqrt2\|G\|_{X_{\beta,4}}
 \le3\|T_A-T_D\|_{\mathcal B(X_{\beta,4})}.
\]
Taking the infimum over regular $T_D$ proves
$\operatorname{dist}_{\mathcal B(X_{\beta,4})}(T_A,\mathscr R_{\beta,4})\ge1/3$.
Since $A$ is constant, the obstruction is the mixing of distinct fold-difference directions, not a failure of H\"older regularity.
\end{proof}

The example also shows why the admissible space $\mathcal L_e$ in
the defect cannot be replaced by the larger space of matrices that
merely preserve $W_e$. Indeed, its matrix $A$ satisfies $AP_1=0$
and $A(W_1)\subset W_1$. If this weaker condition were used, one
could take $L=A$ and obtain zero local defect from
$B_1=h_1A$. The correct, fold-by-fold condition instead gives
$\Omega_1(B_1)=1/2$ and hence a positive norm distance. The
simultaneous-fold restriction is therefore needed in the
\emph{necessity} as well as in the construction of regular approximants.

The next corollary is a standard consequence of inverse-closedness of
the regular algebra and norm-continuity of inversion.

\begin{corollary}\label{inverseclosed}
The Banach algebra $\overline{\Rn}^{\,\|\cdot\|}$ is inverse-closed in $\mathcal B(A_\beta([0,1]))$. In particular, if $T\in\overline{\Rn}$ is invertible as a bounded operator, then $T^{-1}\in\overline{\Rn}$, and its spectrum computed in the closed algebra equals its spectrum in $\mathcal B(A_\beta)$.
\end{corollary}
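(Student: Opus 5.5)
The plan is the standard argument signalled just before the statement: combine inverse-closedness of $\Rn$ from Theorem~\ref{thm:finite-spectrum} with norm continuity and openness of the invertible group in $\mathcal B(A_\beta)$.

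First note that $\overline{\Rn}^{\,\|\cdot\|}$ is a Banach algebra. By Theorem~\ref{thm:finite-spectrum}, $\Rn$ is a unital subalgebra of $\mathcal B(A_\beta)$, and the norm closure of a subalgebra is again a subalgebra, since multiplication is jointly continuous. The closure is complete as a closed subspace of $\mathcal B(A_\beta)$.

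Now let $T\in\overline{\Rn}$ be invertible in $\mathcal B(A_\beta)$, and choose regular operators $T_m\in\Rn$ with $\|T_m-T\|\to0$. The invertible group of $\mathcal B(A_\beta)$ is open, so $T_m$ is invertible in $\mathcal B(A_\beta)$ for all large $m$, e.g.\ whenever $\|T_m-T\|<\|T^{-1}\|^{-1}$. The last assertion of Theorem~\ref{thm:finite-spectrum} then gives $T_m^{-1}\in\Rn$. Inversion is norm continuous on the invertible group (via the Neumann series bound $\|T_m^{-1}-T^{-1}\|\le \|T^{-1}\|^2\|T_m-T\|/(1-\|T^{-1}\|\|T_m-T\|)$), so $T_m^{-1}\to T^{-1}$ and therefore $T^{-1}\in\overline{\Rn}$.

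For the spectral statement, apply the previous paragraph to $T-\lambda I$, which lies in $\overline{\Rn}$ because the identity $M_1$ belongs to $\Rn$. If $T-\lambda I$ is invertible in $\mathcal B(A_\beta)$, its inverse lies in the closed algebra, so $\sigma_{\overline{\Rn}}(T)\subseteq\sigma_{\mathcal B(A_\beta)}(T)$. The reverse inclusion is trivial, since an inverse inside the subalgebra is an inverse in $\mathcal B(A_\beta)$. Hence the two spectra coincide.

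There is no genuine obstacle. The only point needing care is that the inverse-closedness for the approximants $T_m$ comes from Theorem~\ref{thm:finite-spectrum}, which uses the Cayley--Hamilton formula, rather than from any structure of the limit $T$. No symbol description of $T$ (Theorem~\ref{fullclosure}) is needed.
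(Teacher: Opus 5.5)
Your proposal is correct and follows essentially the same route as the paper. Both arguments take inverse-closedness of $\mathscr R_{\beta,n}$ from Theorem~\ref{thm:finite-spectrum}, use openness of the invertible group and norm continuity of inversion to pass from the approximants $T_m$ to the limit $T$, and then replace $T$ by $T-\lambda I$ to obtain equality of the two spectra.
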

\begin{proof}
Theorem 5.3 proves that $\Rn$ is inverse-closed in $\mathcal B(A_\beta)$. Let $T_m\in\Rn$ converge to an invertible $T\in\overline{\Rn}$. For sufficiently large $m$, the operators $T_m$ are invertible and $T_m^{-1}\in\Rn$. Continuity of inversion on the group of invertible bounded operators gives $T_m^{-1}\to T^{-1}$. Thus $T^{-1}\in\overline{\Rn}$. Replacing $T$ by $T-\lambda I$ gives equality of the two spectra.
\end{proof}

For symbols in the closure, the endpoint behaviour is better than (S1)--(S2) alone guarantee: the symbol extends continuously to the closed interval. This is used in Corollary~\ref{cor:new-closure-essential}.
\begin{lemma}\label{lem:closure-continuity}
Let $T_A\in\overline{\Rn}^{\|\cdot\|}$, with symbol $A$ as in Theorem~\ref{fullclosure}, and fix $e\in\{0,1\}$. Let $L_e\in\mathcal L_e$ attain $\Omega_e(B_e)=0$. Then $A(y)Q_e\to L_e$ as $y\to e$; in particular $L_e$ is unique, and $A$ extends continuously to $[0,1]$ with $A(e)=V_e(e)+L_e$. The endpoint matrix $A(e)$ satisfies the separate fold row-difference condition \eqref{regularsymbol}, and
\[
 Z_e(y):=h_e(y)(A(y)-A(e))\quad\text{satisfies}\quad
 \lim_{r\downarrow0}[Z_e]_{\beta,J_e(r)}=0.
\]
\end{lemma}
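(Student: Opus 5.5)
The plan is to work with the germ $d_e:=B_e-h_eL_e$. It vanishes at $e$ and, since $\Omega_e(B_e)=0$ is attained at $L_e$, it satisfies $\rho_e(d_e)=\lim_{r\downarrow0}[d_e]_{\beta,J_e(r)}=0$.

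\emph{Step 1: convergence of $A(y)Q_e$.} Because $d_e(e)=0$, for $y\in J_e(r)\setminus\{e\}$ we have $\|d_e(y)\|\le[d_e]_{\beta,J_e(r)}h_e(y)$. Dividing by $h_e(y)$ and using $B_e=h_eAQ_e$ gives
\[
\|A(y)Q_e-L_e\|\le[d_e]_{\beta,J_e(|y-e|)}\longrightarrow0\qquad(y\to e).
\]
This gives convergence, and hence uniqueness of $L_e$, since any other minimizer would have the same limit.

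\emph{Step 2: continuous extension.} By (S2), $V_e=AP_e$ extends continuously to $e$, so $A=V_e+AQ_e$ tends to $V_e(e)+L_e$. We define $A(e)$ as this limit. On interior compact intervals $A$ is continuous by (S1).

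\emph{Step 3: the row-difference condition.} This is exactly the computation in the upper-bound part of the proof of Theorem~\ref{fullclosure}. Since $V_e(e)W_e\subset W_e$ and $V_e(e)=V_e(e)P_e$, we get $r_jV_e(e)=0$. Membership $L_e\in\mathcal L_e$ gives $r_jL_e\in\mathbb C r_j$ for each $j\in\mathcal J_e$ separately. Hence \eqref{regularsymbol} holds at $e$.

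\emph{Step 4: the remaining claim.} Since $L_e=L_eQ_e$, we can decompose
\[
Z_e=h_e(A-A(e))=h_e(V_e-V_e(e))+h_e\bigl(AQ_e-L_e\bigr)-h_eV_e(e)Q_e .
\]
The first term is a product of two $\beta$-Hölder functions vanishing at $e$. By the product estimate its local seminorm on $J_e(r)$ is $O(r^\beta)$, as in the lower-bound computation in Theorem~\ref{fullclosure}. The second term equals $d_e$ away from $e$, and its local seminorm tends to zero by hypothesis.

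The third term is the obstacle, because $h_e$ has local seminorm $1$. The way out is that this term vanishes identically. Since $V_e(y)Q_e=A(y)P_eQ_e=0$ for $y\neq e$, continuity gives $V_e(e)Q_e=0$.

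Thus $Z_e=h_e(V_e-V_e(e))+d_e$, and both pieces have vanishing local germ seminorm. The only care needed is to check $P_eQ_e=0$ and to keep the order of factors right, which we expect to be routine.
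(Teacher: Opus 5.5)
Your proposal is correct and follows the paper's proof essentially step by step: the pointwise bound coming from $d_e(e)=0$, the limit $A(e)=V_e(e)+L_e$, the fold-by-fold row check, and the splitting $Z_e=h_e(V_e-V_e(e))+d_e$. The only difference is cosmetic. The paper subtracts $A(e)=V_e(e)+L_e$ directly from $A=V_e+AQ_e$ to get $A(y)-A(e)=(V_e(y)-V_e(e))+(A(y)Q_e-L_e)$, so your extra term $-h_eV_e(e)Q_e$ never appears; you in any case show correctly that it vanishes.
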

\begin{proof}
Put $P=P_e$, $Q=Q_e$, $h=h_e$, $V=AP$ and $B=hAQ$. By Theorem~\ref{fullclosure}, $V,B\in A_\beta(J_e)$, $B(e)=0$, and
\[
 \lim_{r\downarrow0}[B-hL_e]_{\beta,J_e(r)}=0.
\]
Since $B-hL_e$ vanishes at $e$, for $y\in J_e(r)$ we have
$\|B(y)-h(y)L_e\|\le h(y)[B-hL_e]_{\beta,J_e(r)}$.
Dividing by $h(y)$ proves $A(y)Q\to L_e$. Since $A(y)P=V(y)\to V(e)$, the continuous extension satisfies $A(e)=V(e)+L_e$. The rows of $V(e)$ coincide across each fold, since $V(e)=V(e)P$ and $V(e)W_e\subseteq W_e$, while $L_e\in\mathcal L_e$ satisfies the individual row conditions. Thus $A(e)$ satisfies \eqref{regularsymbol}. Finally $Z_e=h(V-V(e))+(B-hL_e)$; the second term has vanishing local seminorm, and $[h(V-V(e))]_{\beta,J_e(r)}\le2r^\beta[V]_{\beta,J_e}\to0$.
\end{proof}

\begin{theorem}
\label{thm:bilipschitz-full}
Let $u:[0,1]\to[0,1]$ be continuous. Suppose that
$0=s_0<s_1<\cdots<s_n=1$, that each restriction
$u|_{[s_{j-1},s_j]}$ is a homeomorphism onto $[0,1]$,
and that both this restriction and its inverse are Lipschitz.
Set $\tau_j=(u|_{[s_{j-1},s_j]})^{-1}$ and
$\phi_j=\tau_j\circ u$. Then the exact finite-fiber representation for all $A_\beta$
functions, the regular
fiber-matrix spectrum theorem, the full norm-closure criterion
of Theorem~\ref{fullclosure}, and Corollary~\ref{cor:operator-germ} and Lemma~\ref{lem:closure-continuity} hold for
\[
 \mathscr R_{\beta,u}
   =\left\{M_v+\sum_{j=1}^nM_{p_j}C_{\phi_j}:\
              v,p_j\in A_\beta([0,1])\right\}
\]
with the same endpoint fold conditions, using the branch points
$s_j$ instead of $j/n$. All constants may depend on the finite
collection of Lipschitz constants and on the branch lengths.
\end{theorem}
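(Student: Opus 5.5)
The plan is a reduction to the tent-map case: every ingredient used for $u_n$ either depends on $u$ only through the exact-locality theorem, or is proved in branch coordinates, where a bi-Lipschitz change of variable costs only constants. Four facts need to be rechecked for general $u$.

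\emph{Step 1 (algebraic inputs).} Exact finite-fiber locality follows directly from Theorem~\ref{thm:exact-finite-fiber}. Each level set of $u$ has at most $n$ points because each branch is a homeomorphism, and $u\in A_\beta$ because $u$ is Lipschitz. Next I would extend Lemma~\ref{lem:piecewise-composition}: if $\phi$ is continuous and piecewise bi-Lipschitz, or constant on pieces, with finitely many pieces, then $C_\phi$ is bounded on $A_\beta$. The same splitting argument works. A bi-Lipschitz piece with constants $L,\ell$ maps disjoint-interior intervals to disjoint-interior intervals, with total length at most $L\delta$ and $\beta$-power budget at most $L^\beta N^{1-\beta}M$. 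Applied to $\phi_j=\tau_j\circ u$, which is bi-Lipschitz on every branch, this gives boundedness of $C_{\phi_j}$. The identities $u\circ\phi_j=u$ and $\phi_k\circ\phi_j=\phi_k$ then hold verbatim, so $\mathscr R_{\beta,u}$ is a unital algebra in $\{M_u\}'$.

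\emph{Step 2 (branch model).} Define $Uf=(f\circ\tau_1,\dots,f\circ\tau_n)^{\mathsf T}$, with $W_e$ defined through the folds at $s_j$, namely $z_j=z_{j+1}$ when $u(s_j)=e$. I would reprove Lemma~\ref{branchmodel}. Boundedness of $U$ holds because $\tau_j$ is Lipschitz. The inverse is gluing $F_i\circ\tau_i^{-1}=F_i\circ u|_{[s_{i-1},s_i]}$, using the bi-Lipschitz bound for the inverse branch and then the same splitting at the $s_j$.

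The key observation is that in branch coordinates the fiber matrix is again
\[
A_{ij}(y)=v(\tau_i(y))\delta_{ij}+p_j(\tau_i(y)),
\]
and $\tau_j(e)=\tau_{j+1}(e)=s_j$ at folds. So the row-difference identity and the converse construction go through unchanged. The converse only needs $p_k\circ\tau_i^{-1}\in A_\beta$, which Step 1 supplies. Everything after this point, namely Lemmas~\ref{lem:germ-reduction}, \ref{bounded}, \ref{lem:symbol-recovery} and \ref{lem:closure-cutoff}, Theorem~\ref{fullclosure}, Corollary~\ref{cor:operator-germ} and Lemma~\ref{lem:closure-continuity}, is formulated purely on $X_{\beta,n}$ in the level variable $y$. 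Those results never use affinity of the branches. They therefore transfer word for word once Step 2 identifies $\mathscr R_{\beta,u}$ with the symbols satisfying \eqref{regularsymbol}. The constants pick up the norms of $U$ and $U^{-1}$, which depend only on the Lipschitz constants and the branch lengths.

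\emph{Step 3 (spectrum).} For the regular spectral theorem I would repeat the proof of Theorem~\ref{thm:finite-spectrum} in two directions.

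For the inclusion of matrix spectra in $\sigma(T)$, I would use the intertwining $E_yT=A_y(T)E_y$. Surjectivity of $E_y$ for $0<y<1$ comes from smooth interpolation at the $n$ distinct points $\tau_i(y)$, and continuity handles $y=0,1$.

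For the reverse inclusion, I would use Cayley--Hamilton with coefficients $c_k\circ u$. The entries $v\circ\tau_i$ and $p_j\circ\tau_i$ lie in $A_\beta([0,1])$ because the $\tau_i$ are bi-Lipschitz, which is the Step 1 composition lemma in place of the affine one. The $c_k\circ u$ lie in $A_\beta$ because $u$ is piecewise bi-Lipschitz. Lemma~\ref{lem:composition-reciprocal} then handles reciprocals.

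\emph{Main obstacle.} The main point to check carefully is the composition lemma for non-affine bi-Lipschitz branches, inside the restricted-variation condition. The two budgets must be shown to scale only by the constants $L$ and $L^\beta N^{1-\beta}$. This holds because a Lipschitz image of an interval of length $\ell$ is an interval of length at most $L\ell$, and injectivity on each piece preserves disjointness of interiors. Once this is in place, all remaining arguments are insensitive to the specific form of $u$.
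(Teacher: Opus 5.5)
Your proposal is correct and follows essentially the same route as the paper. Both arguments use the same three ingredients:
\begin{enumerate}
\item a composition lemma for maps that are Lipschitz and monotone on finitely many pieces;
\item the bi-Lipschitz version of the branch isomorphism of Lemma~\ref{branchmodel}, with the folds placed at $s_j$;
\item the observation that every estimate in Theorem~\ref{fullclosure}, Corollary~\ref{cor:operator-germ} and Lemma~\ref{lem:closure-continuity} is made in the level coordinate $y$.
\end{enumerate}

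Two small differences are worth noting. First, the lower constant $\ell$ is not needed in the composition lemma: the inverse-Lipschitz hypothesis enters only because $\tau_j$ Lipschitz makes $U$ bounded. Second, your explicit recheck of the Cayley--Hamilton argument for the regular spectrum theorem is a step the paper leaves implicit, and it is consistent with the paper.
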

\begin{proof}
We first note that a continuous map $\psi:[0,1]\to[0,1]$
which is Lipschitz and monotone on finitely many intervals
induces a bounded composition operator on $A_\beta$.
The H\"older estimate follows from its Lipschitz constant.
For the restricted-variation condition, split admissible
intervals at the finitely many monotonicity breakpoints.
The resulting total $\beta$-power budget increases by a fixed
factor. On any one monotonicity interval, the images of
intervals with disjoint interiors again have disjoint interiors;
their total length and $\beta$-power sum are bounded by the
corresponding original quantities multiplied by the Lipschitz
constant and its $\beta$-power. Thus the restricted variation
tends to zero. This proof does not require the map to be affine.

Each $\tau_j$ is Lipschitz and monotone, and each $\phi_j$
is continuous, Lipschitz, and monotone on each source branch.
Hence all compositions used in the finite-fiber proof are bounded.
The map $Uf=(f\circ\tau_1,\ldots,f\circ\tau_n)$ is bounded;
its inverse glues $F_j\circ u$ on the original branches.
The gluing proof of Lemma~\ref{branchmodel} works with
bi-Lipschitz branches using the preceding composition estimate.
The endpoint matching spaces $W_0,W_1$ and the regular row-difference
conditions depend only on which adjacent branches meet at level
$0$ or $1$, not on linearity of the branch maps.
Every estimate in the proof of Theorem~\ref{fullclosure}
is made in the level coordinate $y$ and uses these matching
spaces and the Banach algebra $A_\beta([0,1])$.
It therefore applies without change after the branch isomorphism.
The endpoint arguments likewise take place entirely in the level coordinate.
\end{proof}

The hypotheses in Theorem~\ref{thm:bilipschitz-full}
are stronger than finite fibers alone; see
Example~\ref{ex:branch-regularity}.

\section{The full finite-branch commutant and its spectrum}
\label{sec:full-commutant}
Theorem~\ref{thm:exact-finite-fiber} and
Theorem~\ref{cor:full-symmetric} established exact locality and
the complete symmetric commutant in Sections~2--3. We now address
the distinct problem of characterizing every bounded commuting
operator for a finite full-branch multiplier. This requires the
weighted endpoint conditions below; pointwise finite-fiber locality
alone does not imply them.

For the next theorem let $u$ have the $n$ full bi-Lipschitz branches of Theorem~\ref{thm:bilipschitz-full}. Let $U$ and $X_{\beta,n}$ be as in Lemma~\ref{branchmodel}. The finite-dimensional spaces $W_e$, and projections $P_e,Q_e$, are those of Section~6. For $e\in\{0,1\}$ write $h_e(y)=|y-e|^\beta$. Choose fixed Lipschitz cutoffs $\chi_0,\chi_1$ equal to one near their respective endpoints and supported in disjoint endpoint neighborhoods. For a matrix $A:(0,1)\to M_n(\mathbb C)$, expressions such as $\chi_e A P_e\in M_n(A_\beta)$ mean that the corresponding product extends by zero to an $A_\beta$ matrix on $[0,1]$.

\begin{theorem}\label{thm:full-branch-commutant}
Let $u$ be as above. Under the branch isomorphism $U$, the full commutant $\{M_u\}'$ consists \emph{exactly} of the operators
\[
 T_A F(y)=A(y)F(y),\qquad 0<y<1,
\]
where the uniquely determined matrix $A:(0,1)\to M_n(\mathbb C)$ satisfies all three conditions:
\begin{enumerate}
\item[(C1)] Every entry of $A$ belongs locally to $A_\beta$ on $(0,1)$.
\item[(C2)] For $e=0,1$,
$\chi_e A P_e\in M_n(A_\beta)$, and its endpoint value sends $W_e$ into $W_e$:
\[
 Q_e(\chi_e A P_e)(e)=0.
\]
\item[(C3)] For $e=0,1$, $\chi_e h_e A Q_e\in M_n(A_\beta)$ and $(\chi_e h_e A Q_e)(e)=0$.
\end{enumerate}
Here $T_A$ is defined at each endpoint by its continuous extension to $X_{\beta,n}$.
\end{theorem}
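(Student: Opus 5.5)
The plan is to prove both inclusions through the branch isomorphism $U$ of Lemma~\ref{branchmodel}, extended to bi-Lipschitz branches by Theorem~\ref{thm:bilipschitz-full}. Conditions (C1)--(C3) are a restatement of (S1)--(S2) with fixed cutoffs $\chi_e$ in place of the intervals $J_e$. So the converse inclusion is immediate: if $A$ satisfies (C1)--(C3), then Lemma~\ref{bounded} makes $T_A$ bounded on $X_{\beta,n}$, with endpoint action $V_e(e)$. Since $U M_u U^{-1}$ is multiplication by the scalar function $y$, which commutes with every matrix multiplier, $U^{-1}T_AU\in\{M_u\}'$.

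For the forward inclusion, let $T\in\{M_u\}'$ and put $S=UTU^{-1}$, which commutes with multiplication by $y$ on $X_{\beta,n}$.

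\emph{Step 1: interior symbol.} For $0<y<1$ the fiber $u^{-1}(y)$ consists of the $n$ distinct points $\tau_j(y)$. Theorem~\ref{thm:exact-finite-fiber} then gives unique scalars $A_{ij}(y)$ with $(SF)(y)=A(y)F(y)$ for all $F\in X_{\beta,n}$. Uniqueness follows because $E_y$ is onto $\mathbb C^n$.

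\emph{Step 2: (C1).} Fix a closed $K\subset(0,1)$ and a smooth cutoff $\theta_K$ equal to one on $K$. Then $S(\theta_K\mathbf e_j)$ coincides with the $j$th column of $A$ on $K$, so its entries are in $A_\beta(K)$.

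\emph{Step 3: (C2).} For $w\in W_e$, the function $\chi_e w$ lies in $X_{\beta,n}$, and $S(\chi_e w)=A\chi_e w$ on $(0,1)$. Running $w$ over a basis of $W_e$ shows that $\chi_e AP_e$ extends to an $A_\beta$ matrix. This requires care near the opposite endpoint, but there $\chi_e=0$, so the extension by zero is fine. Since $S(\chi_e w)\in X_{\beta,n}$, its endpoint value lies in $W_e$, which gives $Q_e(\chi_eAP_e)(e)w=0$.

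\emph{Step 4: (C3).} For $q\in\operatorname{ran}Q_e$, the function $\chi_e h_e q$ belongs to $X_{\beta,n}$: it vanishes at $e$, and it vanishes near the other endpoint. Its image is $\chi_e h_e Aq$, which lies in $A_\beta$, so $\chi_e h_e AQ_e\in M_n(A_\beta)$. The endpoint value vanishes by an argument modeled on Theorem~\ref{thm:multiplier}. The endpoint fiber at level $e$ contains points where branches coalesce, and at those points locality gives $(SF)(e)$ as a combination of the values $F(e)$, which are zero here. More precisely, $(SF)(\text{physical point})$ depends only on $f$ at $u^{-1}(e)$, and $f$ vanishes there when $F=\chi_eh_eq$. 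Hence $(\chi_eh_eAq)(e)=0$.

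\emph{Step 5: identification at the endpoints.} $S$ and $T_A$ agree on $(0,1)$, and both outputs are continuous on $[0,1]$, so they agree everywhere. This yields $S=T_A$.

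I expect the main obstacle to be Step~4's vanishing claim, and more broadly making sure that locality at coalesced endpoint fibers is applied to the physical points rather than the level coordinate. If the direct locality argument is delicate, I would instead use the oscillatory test $\chi_e h_e\sin(\log(1/|y-e|))q$ as in the proof of Theorem~\ref{thm:multiplier}. This function is in $X_{\beta,n}$, and its image $B_e(y)\sin(\log(1/|y-e|))q$ must have a limit at $e$, which forces $B_e(e)q=0$.
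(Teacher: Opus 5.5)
Your proposal is correct and follows the paper's proof. The same test functions $\theta_K\mathbf e_j$, $\chi_e w$ and $\chi_e h_e q$ yield (C1)--(C3), and your converse is the same weighted-multiplier and gluing argument, which you package through Lemma~\ref{bounded} together with the equivalence of (S1)--(S2) with (C1)--(C3).

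The only difference is where locality comes from. You import it, including at the coalesced endpoint fibers, from Theorem~\ref{thm:exact-finite-fiber} on the physical side. The paper instead re-derives it directly in the level coordinate for operators commuting with $M_y$ on $X_{\beta,n}$. Your route is legitimate, since $u$ is Lipschitz and $f=U^{-1}F$ vanishes on all of $u^{-1}(e)=\{\tau_j(e)\}$ when $F(e)=0$. So your Step~4 worry is unfounded, and the oscillatory backup is not needed.
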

\begin{proof}
The branch lemma gives $UM_uU^{-1}=M_y$ on $X_{\beta,n}$. Let $S\in\mathcal B(X_{\beta,n})$ commute with $M_y$. We show directly that this commutation implies fiber locality, including at the endpoints. Fix $y_0\in[0,1]$. If $H\in X_{\beta,n}$ vanishes on a neighborhood of $y_0$, then $H=(y-y_0)G$ for some $G\in X_{\beta,n}$: divide by $y-y_0$ away from that neighborhood and extend by zero near $y_0$. The scalar reciprocal is smooth on the support of $H$, and division preserves the endpoint matching conditions. Therefore
\[
 (SH)(y_0)=(S M_{y-y_0}G)(y_0)=(M_{y-y_0}SG)(y_0)=0.
\]
Now assume $F(y_0)=0$. Choose scalar Lipschitz cutoffs $\chi_{y_0,r}$ equal to one within distance $r$ of $y_0$, supported within distance $2r$, and with Lipschitz constant at most $C/r$ (one-sided at an endpoint). The componentwise uniform localization estimate gives
\begin{equation}\label{eq:full-fiber-uniform-localization}
 \|\chi_{y_0,r}F\|_{X_{\beta,n}}\le C_{\beta,n}\|F\|_{X_{\beta,n}},
\end{equation}
where the bound does not depend on $r$ or $y_0$. Since these cutoffs are scalar, they preserve the endpoint matching spaces. The function $(1-\chi_{y_0,r})F$ vanishes near $y_0$, so
$ (SF)(y_0)=(S(\chi_{y_0,r}F))(y_0). $
Choose $s_r=y_0+3r$ when $y_0<1$, and $s_r=1-3r$ when $y_0=1$, taking $r$ small enough that $s_r\in(0,1)$ and $s_r$ lies outside the support. The function $\chi_{y_0,r}F$ vanishes near $s_r$, hence $(S(\chi_{y_0,r}F))(s_r)=0$. Consequently
\[
 \|(SF)(y_0)\|_{\infty}\le [S(\chi_{y_0,r}F)]_{\beta}|y_0-s_r|^{\beta}
 \le C_{\beta,n}\|S\|\|F\|_{X_{\beta,n}}(3r)^\beta\longrightarrow0.
\]
In particular $F(e)=0$ implies $(SF)(e)=0$ for $e=0,1$. For every interior $y_0$, the same argument proves that $(SF)(y_0)$ depends only on $F(y_0)$. Evaluation $X_{\beta,n}\to\mathbb C^n$ is onto there, using smooth vectors supported away from the endpoints. Hence a unique matrix $A(y_0)$ satisfies $(SF)(y_0)=A(y_0)F(y_0)$ for all $F$.

For any relatively compact interval $J\subset(0,1)$, choose $\psi\in C^\infty_c((0,1))$ equal to one on $J$. The vectors $\psi\mathbf e_j$ belong to $X_{\beta,n}$, and $S(\psi\mathbf e_j)\in X_{\beta,n}$, proving (C1).

To obtain (C2), apply $S$ to the endpoint test vectors $F=\chi_e P_ez$, $z\in\mathbb C^n$. Their images are precisely $\chi_e A P_ez$ in the interior, and the endpoint values lie in $W_e$. For (C3), apply $S$ to $F=\chi_e h_e Q_ez$. This vector vanishes at $e$, so the corresponding output $\chi_e h_e A Q_ez$ belongs to $X_{\beta,n}$ and vanishes there. Thus (C1)--(C3) are necessary.

Conversely, assume (C1)--(C3). Near an endpoint $e$ write $F=P_eF+Q_eF$. The first contribution $A P_eF$ belongs to $A_\beta^n$ by (C2). The second is
\[
 A Q_eF=\frac{(h_e A Q_e)(Q_eF)}{h_e}.
\]
Both factors in the numerator belong to $A_\beta$ and vanish at $e$. The weighted multiplier estimate \eqref{weighted} therefore implies that this vector belongs to $A_\beta^n$ and has endpoint value zero. Thus the result belongs to $X_{\beta,n}$ and its endpoint value lies in $W_e$ by (C2). Condition (C1) takes care of compact subintervals of $(0,1)$; a finite partition of unity proves $T_AF\in X_{\beta,n}$ globally. The graph of $T_A$ is closed: convergence in $X_{\beta,n}$ implies pointwise convergence and $A(y)$ is fixed at each interior point. The closed graph theorem gives boundedness. Finally $T_A$ commutes with $M_y$, hence $U^{-1}T_AU$ commutes with $M_u$.
\end{proof}

The spectral formula for this \emph{full} commutant requires the closure of the interior fiber spectra, since the transverse entries of $A(y)$ need not have endpoint limits.
\begin{lemma}\label{lem:full-weighted-inverse}
Let $J=[0,\eta]$, $h(y)=y^\beta$, and
\[
 \mathcal S_h=\{a:J\setminus\{0\}\to\mathbb C:\ ha\in A_\beta(J),\ (ha)(0)=0\}.
\]
Then $\mathcal S_h$ is an algebra of bounded functions. If $a\in\mathcal S_h$ and $\inf_{0<y\le\eta}|a(y)|>0$, then $a^{-1}\in\mathcal S_h$. Moreover, if $a\in\mathcal S_h$ and $g\in A_\beta(J)$ satisfies $g(0)=0$, then $ag\in A_\beta(J)$ and $(ag)(0)=0$. The analogous assertions hold at the right endpoint.
\end{lemma}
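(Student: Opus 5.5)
The plan is to reduce everything to the weighted quotient estimates already available: the one-sided version of Lemma~\ref{lem:multiplier-bound}, i.e.\ \eqref{weighted}, and Lemma~\ref{lem:weighted-quotient}. Write $b=ha$ for $a\in\mathcal S_h$, so that $b\in A_\beta(J)$ and $b(0)=0$.

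Boundedness is immediate from this. Since $b(0)=0$ and $b$ is $\beta$-H\"older, we have $|a(y)|=|b(y)|/y^\beta\le[b]_{\beta,J}$.

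For closure under products, let $a_1,a_2\in\mathcal S_h$ with $b_i=ha_i$. Then
\[
 h a_1a_2=\frac{b_1b_2}{h}.
\]
Both factors in the numerator lie in $A_\beta(J)$ and vanish at $0$, so \eqref{weighted} gives that this quotient lies in $A_\beta(J)$ and equals $0$ at $0$. Sums and scalar multiples are clear, so $\mathcal S_h$ is an algebra.

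The same computation gives the last assertion. For $g\in A_\beta(J)$ with $g(0)=0$, write $ag=bg/h$ and apply \eqref{weighted} again. This shows $ag\in A_\beta(J)$ with $(ag)(0)=0$.

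For the inverse, suppose $|a|\ge\delta_0>0$ on $(0,\eta]$. We need $h/a=h^2/b\in A_\beta(J)$ with value zero at $0$. The hypothesis gives $|b|\ge\delta_0 h$, and $h\in A_\beta(J)$ with $h(0)=0$ and $|h|\le h$. So Lemma~\ref{lem:weighted-quotient} applies, in its one-sided form on $J$, with numerator factors $h,h$ and denominator $d=b$. It yields $h\cdot h/b\in A_\beta(J)$, extended by $0$ at the origin. The proof of Lemma~\ref{lem:weighted-quotient} uses only the increment inequality \eqref{eq:weighted-quotient-increments} on one side of the distinguished point, so it transfers verbatim to $J$.

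The main care point is checking that the quotient lemmas really work on the one-sided interval $J$ with $h=y^\beta$ rather than the signed $h$ about $c$. In Lemma~\ref{lem:weighted-quotient} the ordering $|h(s)|\le|h(t)|$ is what makes the three-term increment bound valid. On $J$ that ordering is automatic, since $h$ is monotone, so no splitting at the distinguished point is needed. The restricted-variation condition follows by summing the increment bounds over admissible families.

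The right endpoint is handled by reflecting $y\mapsto1-y$. This is an isometry of the restricted-variation algebras, which reduces that case to the left-endpoint case.
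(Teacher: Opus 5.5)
Your proof is correct and is essentially the paper's argument. It gets boundedness from $|a|=|ha|/h\le[ha]_\beta$, the product and vanishing-factor claims from \eqref{weighted} applied to $(ha_1)(ha_2)/h$ and $(ha)g/h$, and the right endpoint by reflection (the paper also notes that constants lie in $\mathcal S_h$ because $h\in A_\beta(J)$, a fact it uses later). For the reciprocal, the paper expands $h^2/(ha)$ directly instead of citing Lemma~\ref{lem:weighted-quotient}, but your factors $(h,h,ha)$ give exactly the same increment bound $\frac{2}{\delta_0}|h(y)-h(x)|+\frac{1}{\delta_0^2}|(ha)(y)-(ha)(x)|$, so the two routes coincide.
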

\begin{proof}
Put $b=ha\in A_\beta(J)$, $b(0)=0$. For $y>0$,
$|a(y)|=|b(y)|/h(y)\le[b]_\beta$. Thus every element of
$\mathcal S_h$ is bounded. If $a,k\in\mathcal S_h$, then
\[
 h(ak)=\frac{(ha)(hk)}{h}\in A_\beta(J),\qquad
 (h ak)(0)=0,
\]
by the weighted multiplier estimate \eqref{weighted} (after moving
the distinguished point to the endpoint). Constants also belong
to $\mathcal S_h$, since $h\in A_\beta(J)$.

For inverse-closedness, suppose $|a(y)|\ge m>0$ and define
$c(0)=0$ and $c(y)=h(y)/a(y)=h(y)^2/b(y)$ for $y>0$.
For $0<x<y\le\eta$, expand
\[
 c(y)-c(x)=\frac{h(y)^2-h(x)^2}{b(y)}
  +\frac{h(x)^2\bigl(b(x)-b(y)\bigr)}{b(y)b(x)}.
\]
The bounds $|b(z)|\ge m h(z)$ and $h(x)\le h(y)$ give
\begin{equation}\label{eq:weighted-inverse-increments}
 |c(y)-c(x)|\le \frac{2}{m}|h(y)-h(x)|
               +\frac{1}{m^2}|b(y)-b(x)|.
\end{equation}
The same inequality holds for $x=0$. It proves the
$\beta$-H\"older estimate for $c$. More importantly, summing
\eqref{eq:weighted-inverse-increments} over any admissible disjoint
interval family yields
\[
 V_{\beta,\delta,M}(c)
 \le \frac{2}{m}V_{\beta,\delta,M}(h)
   +\frac{1}{m^2}V_{\beta,\delta,M}(b)\longrightarrow0
 \qquad(\delta\downarrow0).
\]
Hence $c\in A_\beta(J)$ and $c(0)=0$, which is exactly
$a^{-1}=c/h\in\mathcal S_h$.

Finally, if $g\in A_\beta(J)$ and $g(0)=0$, then
$ag=(ha)g/h\in A_\beta(J)$ and $(ag)(0)=0$ by the same
weighted multiplier estimate. Reflection gives the statement
at the right endpoint.
\end{proof}

\begin{theorem}\label{thm:full-commutant-spectrum}
Let $u$ have the $n$ full bi-Lipschitz branches described above, and let $T=U^{-1}T_AU\in\{M_u\}'$ be represented by the unique symbol of Theorem~\ref{thm:full-branch-commutant}. Then
\begin{equation}\label{eq:full-commutant-spectrum}
 \boxed{\quad\sigma_{\mathcal B(A_\beta)}(T)
       =\overline{\bigcup_{0<y<1}\sigma_{M_n(\mathbb C)}(A(y))}.\quad}
\end{equation}
If $\lambda$ is outside this closed set, the inverse of $T-\lambda I$ is the commuting operator represented by $(A(y)-\lambda I)^{-1}$.
\end{theorem}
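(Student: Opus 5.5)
The argument transfers everything to $X_{\beta,n}$ via $U$, so that $T$ becomes $T_A$ with symbol $A$ satisfying (C1)--(C3) of Theorem~\ref{thm:full-branch-commutant}. Set $\Sigma=\overline{\bigcup_{0<y<1}\sigma(A(y))}$. We prove $\Sigma\subseteq\sigma(T)$ with approximate eigenvectors, and $\sigma(T)\subseteq\Sigma$ by building the inverse symbol and checking (C1)--(C3) for it. This follows the pattern of Theorem~\ref{thm:full-essential-spectrum}, with Lemma~\ref{lem:full-weighted-inverse} replacing the scalar quotient lemma.

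\emph{Inclusion $\Sigma\subseteq\sigma(T)$.} Fix $y_0\in(0,1)$ and $A(y_0)\xi=\lambda\xi$ with $\xi\ne0$. By (C1) the entries of $A-\lambda I$ are $\beta$-H\"older near $y_0$. Lemma~\ref{lem:localized-spectral-test} then gives $F_r=r^\beta\eta((y-y_0)/r)\xi\in X_{\beta,n}$, supported away from the endpoints, with $\|F_r\|\ge c$ and $\|(T_A-\lambda)F_r\|\le Cr^\beta$. Hence $\lambda\in\sigma_{\mathrm{ap}}(T)$. Alternatively, evaluation $E_{y_0}$ is onto and intertwines $T_A$ with $A(y_0)$, so invertibility of $T-\lambda$ would make $A(y_0)-\lambda$ onto. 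Since $\sigma(T)$ is closed, $\Sigma\subseteq\sigma(T)$.

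\emph{Inclusion $\sigma(T)\subseteq\Sigma$.} Let $\lambda\notin\Sigma$ and $D(y)=A(y)-\lambda I$. First I need $\sup_y\|A(y)\|<\infty$. This holds near each endpoint because $AP_e$ is bounded by (C2) and $AQ_e=(h_eAQ_e)/h_e$ is bounded, as in Lemma~\ref{lem:full-weighted-inverse}; on compact interior sets it follows from (C1). With all eigenvalues of $D(y)$ at distance $\ge\operatorname{dist}(\lambda,\Sigma)>0$ from zero, $|\det D(y)|$ is bounded below uniformly, so $D^{-1}=\operatorname{adj}(D)/\det D$ is uniformly bounded. Interior regularity (C1) for $D^{-1}$ follows from Lemma~\ref{lem:composition-reciprocal} on each compact $K\subset(0,1)$.

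\emph{Endpoint conditions for $D^{-1}$ (main obstacle).} Near $e$, I would work in the algebra
\[
\mathcal A_e=\{M:\ MP_e\in M_n(A_\beta(J_e)),\ h_eMQ_e\in M_n(A_\beta(J_e)),\ (h_eMQ_e)(e)=0,\ Q_e(MP_e)(e)=0\}.
\]
Step one is to show that $\mathcal A_e$ is an algebra. For a product $MN$, write $MN=MP_eN+MQ_eN$. The term $MQ_e\cdot N P_e$ lies in $M_n(A_\beta)$ and vanishes at $e$ in the $Q_e$-leg, since it equals $(h_eMQ_e)(Q_eNP_e)/h_e$ and $Q_eNP_e$ vanishes at $e$ by the invariance condition; this is handled by \eqref{weighted}. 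The $Q_e$-columns are treated entrywise by the $\mathcal S_h$-algebra property of Lemma~\ref{lem:full-weighted-inverse}. Thus every entry of $MQ_e$ lies in $\mathcal S_{h_e}$, and $MP_e$ is $A_\beta$ with $W_e$-invariant endpoint value.

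Step two is inverse-closedness. Entries of $\det D$ and $\operatorname{adj}D$ are polynomials in the entries, which lie in $A_\beta$ (on $P_e$-columns) or in $\mathcal S_{h_e}$ (on $Q_e$-columns). Conjugating by a fixed unitary adapted to $W_e\oplus W_e^\perp$ gives block form $\begin{psmallmatrix}X&Y\\Z&W\end{psmallmatrix}$, with $X$ and $Z$ in $A_\beta$, $Z(e)=0$, and $Y,W$ with entries in $\mathcal S_{h_e}$. Then $\det D\in\mathcal S_{h_e}$ with $|\det D|\ge m>0$, so $1/\det D\in\mathcal S_{h_e}$ by Lemma~\ref{lem:full-weighted-inverse}. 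The adjugate keeps the same block pattern: its $P_e$-columns need the factor structure to land in $A_\beta$, using that $\mathcal S_{h}\cdot\{g:g(e)=0\}\subseteq A_\beta$ vanishing at $e$. This block bookkeeping is where I expect the real work to lie. A cleaner alternative is Cayley--Hamilton, as in Theorem~\ref{thm:finite-spectrum}: $D^{-1}=-c_0^{-1}(D^{n-1}+\cdots+c_1I)$, where the coefficients $c_k$ are scalar elements of $\mathcal S_{h_e}$ and $c_0$ is bounded below. This reduces everything to the algebra property of $\mathcal A_e$ together with the fact that scalars in $\mathcal S_{h_e}$ multiply $\mathcal A_e$ into itself. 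The latter needs checking, because a scalar $a\in\mathcal S_{h_e}$ times $MP_e$ need not be in $A_\beta$ unless $a$ has an endpoint limit. I would therefore fall back on the block/Schur-complement inverse: $X$ is invertible near $e$ with $A_\beta$ inverse, since $Z(e)=0$ forces $X(e)$ to carry the $W_e$-eigenvalues, which are bounded away from $0$. The Schur complement $W-ZX^{-1}Y$ has entries in $\mathcal S_{h_e}$ and inverse in $\mathcal S_{h_e}$ by the scalar lemma applied to its determinant.

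Once $D^{-1}$ satisfies (C1)--(C3), Theorem~\ref{thm:full-branch-commutant} yields a bounded commuting $T_{D^{-1}}$. It inverts $T_A-\lambda$ at interior points, hence everywhere by continuity.
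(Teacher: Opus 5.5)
Your proof is correct. The forward inclusion and the uniform bounds on $A$ and on the inverse symbol are as in the paper; the difference lies in how the endpoint conditions for the inverse are verified.

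Write $D=A-\lambda I$ as you do (the paper calls this $B$). The paper first applies the adjugate over the algebra $\mathcal S_h$ to get $D^{-1}\in M_n(\mathcal S_h)$, which is exactly (C3). It then obtains the stronger condition (C2) from the factorization $C=B_0^{-1}D$ with $B_0=DP_e+Q_e$, which yields $D^{-1}P_e=P_eB_0^{-1}P_e+(C^{-1}Q_e)(Q_eB_0^{-1}P_e)$.

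Your Schur-complement route is this same factorization written in blocks. In your coordinates $B_0=\begin{psmallmatrix}X&0\\Z&I\end{psmallmatrix}$ and $C=\begin{psmallmatrix}I&X^{-1}Y\\0&S\end{psmallmatrix}$, with $S=W-ZX^{-1}Y$. The paper's version is more economical: since $C^{-1}=D^{-1}B_0$ and $D^{-1}\in M_n(\mathcal S_h)$ is already known, it never has to invert $S$. Yours gives explicit formulas for all four blocks, at the cost of three checks you should write out.

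\emph{First}, $|\det S|=|\det D|/|\det X|$ is bounded below because $\det X$ is bounded. This is what lets Lemma~\ref{lem:full-weighted-inverse} apply to $\det S$.

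\emph{Second}, look at the first block column of the inverse, $X^{-1}+X^{-1}YS^{-1}ZX^{-1}$ and $-S^{-1}ZX^{-1}$. Every $\mathcal S_h$-factor there is multiplied by $ZX^{-1}$, which lies in $A_\beta$ and vanishes at $e$. The product part of the lemma then gives $D^{-1}P_e\in M_n(A_\beta)$ with $Q_e(D^{-1}P_e)(e)=0$. The second block column has entries in $\mathcal S_h$, which gives (C3).

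\emph{Third}, the invertibility of $X(e)$ needs an actual argument, since $D(y)$ has no limit at $e$ and ``the $W_e$-eigenvalues'' are not directly defined. The paper's bound $\|D(y)w\|\ge(\sup_z\|D(z)^{-1}\|)^{-1}\|w\|$ for $w\in W_e$, together with $Z(y)\to0$, gives $\|X(e)w\|\ge c\|w\|$.

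The algebra $\mathcal A_e$ and the Cayley--Hamilton discussion are not needed on this route. You were right that Cayley--Hamilton fails here, since scalar $\mathcal S_h$ coefficients need not preserve $A_\beta$ on the $P_e$-columns.
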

\begin{proof}
Fix an interior level $y_0\in(0,1)$. The evaluation map
$E_{y_0}:X_{\beta,n}\to\mathbb C^n$ is onto: a smooth vector-valued
function supported near $y_0$ realizes any prescribed value.
Moreover,
\[
 E_{y_0}(T_A-\lambda I)
       =(A(y_0)-\lambda I)E_{y_0}.
\]
If $T_A-\lambda I$ were invertible, its surjectivity and the
surjectivity of $E_{y_0}$ would force $A(y_0)-\lambda I$ to be
onto, hence invertible. Therefore every interior fiber eigenvalue
belongs to $\sigma(T)$, and so does the closure of their union.

Conversely, suppose $\lambda$ lies outside that closed union.
Set $B(y)=A(y)-\lambda I$ and
\[
 \varepsilon=\operatorname{dist}\!\left(
 \lambda,\overline{\bigcup_{0<y<1}\sigma(A(y))}\right)>0.
\]
Conditions (C1)--(C3) imply that $B$ is uniformly bounded.
Indeed, $BP_e$ is bounded near $e$, and because
$(h_e BQ_e)(e)=0$ and $h_e(y)=|y-e|^\beta$,
\[
 \|B(y)Q_e\|\le C_n [h_e BQ_e]_{\beta,J_e}
 \quad\text{for }y\in J_e\setminus\{e\}.
\]
Condition (C1) gives boundedness on the remaining compact interval.
Thus $K:=\sup_{0<y<1}\|B(y)\|$ is finite.
All eigenvalues of $B(y)$ have modulus at least $\varepsilon$.
Consequently
\begin{equation}\label{eq:full-commutant-uniform-inverse}
 |\det B(y)|\ge\varepsilon^n,
 \qquad \sup_{0<y<1}\|B(y)^{-1}\|
 \le C_n K^{n-1}\varepsilon^{-n}<\infty,
\end{equation}
where the second estimate follows from the adjugate formula
and $n$ is fixed. Write $D(y)=B(y)^{-1}$.

Fix either endpoint $e$, and work on a sufficiently short closed
endpoint interval $J_e$ where the cutoffs in (C2)--(C3) equal one.
For brevity put $P=P_e$, $Q=Q_e$, and $h=h_e$.
By (C2), $BP$ has entries in $A_\beta(J_e)$; by (C3),
$hBQ$ has entries in $A_\beta(J_e)$ and vanishes at $e$.
Every entry of $BP$ belongs to $\mathcal S_h$, since multiplication
by $h$ preserves $A_\beta$ and $h(e)=0$. Thus every entry of
$B=BP+BQ$ belongs to $\mathcal S_h$.
As $\mathcal S_h$ is a unital algebra, the entries of
$\operatorname{adj}B$ and $\det B$ belong to $\mathcal S_h$.
The determinant bound in \eqref{eq:full-commutant-uniform-inverse}
and Lemma~\ref{lem:full-weighted-inverse} imply
$(\det B)^{-1}\in\mathcal S_h$. Hence the adjugate formula gives
$D\in M_n(\mathcal S_h)$, which is precisely (C3) for $D$:
$hDQ\in M_n(A_\beta(J_e))$ and $(hDQ)(e)=0$.
Ordinary matrix inversion on compact subintervals of $(0,1)$
also proves (C1) for $D$.

The remaining condition (C2) is \emph{stronger} than weighted
membership: we must show $DP\in M_n(A_\beta(J_e))$ and
$Q(DP)(e)=0$. Let $a_e:W_e\to W_e$ be the endpoint map of $BP$.
It is invertible. In fact, if $w\in W_e$, then
\[
 \|B(y)w\|\ge
 \left(\sup_{0<z<1}\|D(z)\|\right)^{-1}\|w\|.
\]
Taking $y\to e$ shows the same lower bound for $\|a_ew\|$;
since $W_e$ is finite-dimensional, $a_e$ is onto and invertible.

Define $B_0(y)=B(y)P+Q$ on $J_e$. Its entries belong to
$A_\beta(J_e)$, and $B_0(e)$ is block diagonal relative to
$W_e\oplus W_e^\perp$, with diagonal blocks $a_e$ and
$I_{W_e^\perp}$. After shortening $J_e$ if necessary,
$B_0(y)$ is invertible there, and Lemma~\ref{lem:composition-reciprocal} gives
$B_0^{-1}\in M_n(A_\beta(J_e))$.
Put $C=B_0^{-1}B$. Then
\[
 CP=P,\qquad C^{-1}P=P,\qquad C^{-1}=DB_0
  \in M_n(\mathcal S_h).
\]
The first two identities express the fact that $B$ and $B_0$
agree on $W_e$. Decomposing $B_0^{-1}P$ along $P+Q$ gives
\begin{equation}\label{eq:full-commutant-DP}
 DP=P B_0^{-1}P+(C^{-1}Q)(Q B_0^{-1}P).
\end{equation}
Here $G:=QB_0^{-1}P$ belongs to $M_n(A_\beta(J_e))$ and
$G(e)=0$, since $B_0(e)^{-1}$ preserves $W_e$. Also
$hC^{-1}Q\in M_n(A_\beta(J_e))$ and vanishes at $e$.
Applying the \emph{product-with-a-vanishing-function} part of
Lemma~\ref{lem:full-weighted-inverse} entrywise shows that
$(C^{-1}Q)G\in M_n(A_\beta(J_e))$ with zero endpoint value.
Equation \eqref{eq:full-commutant-DP} now proves
$DP\in M_n(A_\beta(J_e))$ and
$Q(DP)(e)=0$. Thus (C2) holds for $D$ at both endpoints.

By Theorem~\ref{thm:full-branch-commutant}, $T_D$ is a bounded
operator on $X_{\beta,n}$ commuting with $M_y$.
At every interior point we have $D(y)B(y)=B(y)D(y)=I$;
therefore $T_DT_B=T_BT_D=I$ at interior points and, by
continuity, also at the endpoints. Hence $T_D$ is the bounded
two-sided inverse of $T_B$, and $U^{-1}T_DU$ is the inverse of
$T-\lambda I$ on $A_\beta$. This completes the reverse inclusion.
\end{proof}

\begin{corollary}
\label{cor:full-branch-essential}
Under the finite full bi-Lipschitz branch assumptions of
Theorem~\ref{thm:full-branch-commutant}, let
$T=U^{-1}T_AU\in\{M_u\}'$ be any bounded commuting operator,
not necessarily in the regular operator-norm closure. Then
\begin{equation}\label{eq:full-branch-essential}
\boxed{\sigma_{\mathrm{ap}}(T)=\sigma_{\mathrm{usf}}(T)
=\sigma_{\mathrm{ess}}(T)=\sigma(T)}
=\overline{\bigcup_{0<y<1}\sigma(A(y))}.
\end{equation}
In particular, $T$ is Fredholm if and only if it is invertible.
\end{corollary}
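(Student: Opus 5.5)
The plan mirrors the proof of Theorem~\ref{thm:full-essential-spectrum}, transported to the branch model $X_{\beta,n}$ via the isomorphism $U$. Since $U$ is a Banach-space isomorphism, all four spectral sets of $T$ coincide with those of $T_A$ on $X_{\beta,n}$. Set $\Sigma=\overline{\bigcup_{0<y<1}\sigma(A(y))}$. Theorem~\ref{thm:full-commutant-spectrum} already gives $\sigma(T)=\Sigma$. The standard inclusions $\sigma_{\mathrm{usf}}\subseteq\sigma_{\mathrm{ess}}\subseteq\sigma$ and $\sigma_{\mathrm{ap}}\subseteq\sigma$ then reduce everything to one claim: $\Sigma\subseteq\sigma_{\mathrm{ap}}(T_A)\cap\sigma_{\mathrm{usf}}(T_A)$. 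Both of these sets are closed, so it suffices to take $\lambda\in\sigma(A(y_0))$ with $y_0\in(0,1)$ interior.

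For such $\lambda$, pick an eigenvector $\xi$ with $(A(y_0)-\lambda I)\xi=0$. Let $H=A-\lambda I$ on a small compact neighborhood $K\subset(0,1)$ of $y_0$. By (C1) its entries lie in $A_\beta(K)$, so they are $\beta$-H\"older there. Apply Lemma~\ref{lem:localized-spectral-test} to obtain $F_r(y)=r^\beta\eta((y-y_0)/r)\xi$. These are smooth and compactly supported in the interior, so they lie in $X_{\beta,n}$ with no endpoint constraints. They satisfy $c\le\|F_r\|\le C$, $\|F_r\|_\infty\le Cr^\beta$, and $\|(T_A-\lambda I)F_r\|=\|HF_r\|\le Cr^\beta$. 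The restricted-variation part of the norm needs a separate check. $HF_r$ is a product of an $A_\beta(K)$ matrix with a smooth vector supported in $K$, extended by zero. I will bound its full $X_{\beta,n}$ norm by the H\"older bound of the lemma together with the fact that $A_\beta$ is an algebra, using the same splitting-at-support-boundary argument as in Lemma~\ref{lem:closure-cutoff}. This is the step where care is needed: the lemma states a $C^\beta$ bound, and the $X_{\beta,n}$ norm is the max of component $A_\beta$ norms. As defined in this paper, that norm equals $\|\cdot\|_\infty+[\cdot]_\beta$, with restricted variation only a membership condition. So the $C^\beta$ estimate suffices, and membership follows from the algebra property.

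Normalize $u_r=F_r/\|F_r\|$. Then $\|(T_A-\lambda I)u_r\|\to0$, so $\lambda\in\sigma_{\mathrm{ap}}$. Moreover $\|u_r\|_\infty\to0$ while $\|u_r\|=1$, so no subsequence converges in norm: a limit would have norm one but vanish uniformly. Now suppose $T_A-\lambda I$ were upper semi-Fredholm. The open mapping theorem applied to $X_{\beta,n}/\ker\to\operatorname{ran}$ would give $\operatorname{dist}(u_r,\ker)\to0$. Finite-dimensionality of the kernel would then yield a convergent subsequence, a contradiction. Hence $\lambda\in\sigma_{\mathrm{usf}}$. Taking closures gives $\Sigma\subseteq\sigma_{\mathrm{ap}}\cap\sigma_{\mathrm{usf}}$, and all equalities in \eqref{eq:full-branch-essential} follow.

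Finally, take $\lambda=0$. $T$ is Fredholm iff $0\notin\sigma_{\mathrm{ess}}(T)=\sigma(T)$, that is, iff $T$ is invertible. The main obstacle is only the norm bookkeeping noted above; endpoint behaviour is irrelevant, because the test vectors live at interior levels and Theorem~\ref{thm:full-commutant-spectrum} already handles the reverse inclusion, including the endpoints.
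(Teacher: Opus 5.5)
Your proposal is correct and follows essentially the same route as the paper. You take $\sigma(T)=\overline{\bigcup_{0<y<1}\sigma(A(y))}$ from Theorem~\ref{thm:full-commutant-spectrum} and use the interior test vectors $F_r$ of Lemma~\ref{lem:localized-spectral-test} to place each interior fiber eigenvalue in $\sigma_{\mathrm{ap}}(T)\cap\sigma_{\mathrm{usf}}(T)$, the latter via the open-mapping and finite-kernel compactness argument; closedness of these sets and the standard inclusions then give all the equalities. Your extra remark is also accurate: the $X_{\beta,n}$ norm is the componentwise H\"older norm, so the lemma's estimate already bounds the residual in operator norm, a detail the paper leaves implicit.
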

\begin{proof}
Theorem~\ref{thm:full-commutant-spectrum} gives the last equality
for the ordinary spectrum. We prove the reverse inclusion for
$\sigma_{\mathrm{usf}}(T)$ and $\sigma_{\mathrm{ap}}(T)$.
Fix $y_0\in(0,1)$ and $\lambda\in\sigma(A(y_0))$. Choose
$0\ne\xi\in\ker(A(y_0)-\lambda I)$ and a nonzero
$\eta\in C_c^\infty((-1,1))$. For sufficiently small $r>0$ let
\[
 F_r(y)=r^\beta\eta\!\left(\frac{y-y_0}{r}\right)\xi.
\]
These functions belong to $X_{\beta,n}$, since they vanish near
both endpoints. Their $X_{\beta,n}$ norms are bounded above and away
from zero, whereas $\|F_r\|_\infty\to0$. By (C1), $A$ is
$\beta$-H\"older on a fixed neighborhood of $y_0$.
Lemma~\ref{lem:localized-spectral-test} applies. Using
$(A(y_0)-\lambda I)\xi=0$ and the H\"older product estimate,
\[
 \|(T_A-\lambda I)F_r\|_{X_{\beta,n}}=O(r^\beta).
\]
After normalization, the $F_r$ form a sequence of unit vectors
with no norm-convergent subsequence. Thus $\lambda$ belongs to the
approximate point spectrum. It also belongs to the upper
semi-Fredholm spectrum: otherwise the open mapping theorem applied to the quotient by the kernel gives
$\operatorname{dist}(F_r,\ker(T_A-\lambda I))\le
C\|(T_A-\lambda I)F_r\|$ and finite-dimensionality of the kernel
would give a convergent subsequence. Both spectra are closed, so
their inclusions extend to the closure of the union of all interior
fiber eigenvalues. The remaining inclusions are standard, and the
ordinary spectral equality completes the proof.
\end{proof}

\begin{corollary}\label{cor:class-full}
Under the hypotheses of Theorem~\ref{thm:full-branch-commutant} (in particular for $u=u_n$), the class $\mathfrak M_{\beta,n}$ of Corollary~\ref{cor:operator-germ} coincides with the full commutant under $U$:
\[
 \mathfrak M_{\beta,n}=U\{M_u\}'U^{-1}.
\]
Consequently the quotient of the full commutant by the operator-norm closure of the regular algebra is isomorphic, as a Banach space, to a closed subspace of the direct sum of the two germ quotients via the map of Corollary~\ref{cor:operator-germ}.
\end{corollary}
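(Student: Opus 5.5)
The plan is to show that conditions (S1)--(S2) defining $\mathfrak M_{\beta,n}$ are equivalent to conditions (C1)--(C3) of Theorem~\ref{thm:full-branch-commutant}. Both classes consist of multiplication operators $T_A$ on $X_{\beta,n}$ by a matrix function $A$ on $(0,1)$. In each case the symbol is uniquely determined by interior fiber evaluation, because $E_y$ is onto for $0<y<1$. So it suffices to compare the two sets of conditions on $A$ and check that they define the same operators. Once the equality $\mathfrak M_{\beta,n}=U\{M_u\}'U^{-1}$ is established, the second sentence of the corollary is a restatement of Corollary~\ref{cor:operator-germ}. For general bi-Lipschitz branches, Theorem~\ref{thm:bilipschitz-full} carries over Corollary~\ref{cor:operator-germ}.

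\emph{Inclusion $\mathfrak M_{\beta,n}\subseteq U\{M_u\}'U^{-1}$.} Condition (S1) gives (C1). Condition (S2) says that $V_e=AP_e$ extends to $M_n(A_\beta(J_e))$ and that $V_e(e)W_e\subset W_e$. Multiplying by the fixed Lipschitz cutoff $\chi_e$ and extending by zero yields $\chi_eAP_e\in M_n(A_\beta)$. Since $V_e(e)=V_e(e)P_e$ maps into $W_e$, we get $Q_e(\chi_eAP_e)(e)=0$, which is (C2). Likewise $B_e=h_eAQ_e$ extends with $B_e(e)=0$, so $\chi_eB_e$ gives (C3). The only care needed concerns the size of neighbourhoods. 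The cutoffs in Section~7 may be supported in a larger set than $J_e$, but on the gap between $J_e$ and the support of $\chi_e$ inside $(0,1)$ condition (S1) already supplies $A_\beta$ regularity. A finite gluing argument, as in Lemma~\ref{branchmodel}, then combines the pieces. Alternatively, one can simply invoke Lemma~\ref{bounded} to see that $T_A$ is bounded, note that it commutes with $M_y$, and apply Theorem~\ref{thm:full-branch-commutant}.

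\emph{Reverse inclusion.} Let $T_A$ be a member of the commutant with symbol satisfying (C1)--(C3). Shrink $\eta$ if necessary, using that $\Omega_e$ and the class do not depend on $\eta$, so that $\chi_e\equiv1$ on $J_e$. Conversely, if $\chi_e$ equals one only on a smaller interval, again use (C1) on the compact remainder. Restricted to $J_e$, (C2) says that $AP_e$ is in $M_n(A_\beta(J_e))$ with $V_e(e)W_e\subset W_e$. Restricted to $J_e$, (C3) says that $h_eAQ_e$ extends in $A_\beta(J_e)$ and vanishes at $e$. This is exactly (S2), and (C1) is (S1). Boundedness of $T_A$ is given.

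I expect the main obstacle to be purely bookkeeping: the two conditions are formulated with different localizations. In (S2) one works with restrictions to $J_e$ and extensions, while in (C2)--(C3) one works with global cutoff products extended by zero. The restriction and extension-by-zero maps must be checked to preserve $A_\beta$ membership. This uses the splitting of admissible interval families at finitely many points, which costs at most a factor $2^{1-\beta}$ in the $\beta$-power budget, as already done in Lemma~\ref{lem:closure-cutoff} and Lemma~\ref{branchmodel}.
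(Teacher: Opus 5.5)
Your proposal is correct and follows essentially the same route as the paper. The paper also identifies (S1) with (C1), uses (C1) to make membership of $AP_e$ and $h_eAQ_e$ near $e$ independent of the endpoint interval and cutoff, and notes that $V_e(e)=V_e(e)P_e$ makes $V_e(e)W_e\subseteq W_e$ equivalent to $Q_eV_e(e)=0$. It then applies Theorem~\ref{thm:full-branch-commutant}, Corollary~\ref{cor:operator-germ}, and Theorem~\ref{thm:bilipschitz-full}. Your additional remarks only make the bookkeeping more explicit: uniqueness of the symbol via surjective interior evaluation, the observation that any bounded $T_A$ commutes with $M_y$, and the gluing across the gap between $J_e$ and the cutoff supports.
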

\begin{proof}
Condition (S1) is (C1). By (C1), membership of $AP_e$ and $h_eAQ_e$ in $A_\beta$ near $e$ does not depend on the endpoint interval or cutoff. Since $V_e(e)=V_e(e)P_e$, the inclusion $V_e(e)W_e\subseteq W_e$ is equivalent to $Q_eV_e(e)=0$. Thus (S2) is equivalent to (C2) together with (C3). Apply Theorem~\ref{thm:full-branch-commutant}, Corollary~\ref{cor:operator-germ}, and Theorem~\ref{thm:bilipschitz-full}.
\end{proof}

For the regular closure, the closure bar in the spectral formula can be removed, because the symbol is continuous on the closed interval.
\begin{corollary}\label{cor:new-closure-essential}
Let $u$ have the full bi-Lipschitz branches of Theorem~\ref{thm:full-branch-commutant}, and let $T_A$ belong to the operator-norm closure of its regular finite-branch algebra. Extend its symbol $A$ continuously to $[0,1]$ by Lemma~\ref{lem:closure-continuity}. Then
\[
 \sigma_{\mathrm{ap}}(T_A)=\sigma_{\mathrm{usf}}(T_A)=\sigma_{\mathrm{ess}}(T_A)=\sigma(T_A)
 =\bigcup_{0\le y\le1}\sigma(A(y)).
\]
Consequently $T_A$ is Fredholm if and only if it is invertible, equivalently if and only if $\det A(y)\ne0$ for every $y\in[0,1]$. In that case $T_A^{-1}=T_{A^{-1}}$ belongs to the regular operator-norm closure.
\end{corollary}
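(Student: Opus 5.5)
The plan is to deduce everything from Corollary~\ref{cor:full-branch-essential} and Theorem~\ref{thm:full-commutant-spectrum}, using the continuity supplied by Lemma~\ref{lem:closure-continuity}. Since $T_A\in\overline{\Rn}$, Theorem~\ref{fullclosure} (transported by Theorem~\ref{thm:bilipschitz-full}) shows that $A$ satisfies (S1)--(S2). By Corollary~\ref{cor:class-full}, $T_A$ therefore lies in the full commutant. Corollary~\ref{cor:full-branch-essential} then gives
\[
\sigma_{\mathrm{ap}}(T_A)=\sigma_{\mathrm{usf}}(T_A)=\sigma_{\mathrm{ess}}(T_A)=\sigma(T_A)=\overline{\bigcup_{0<y<1}\sigma(A(y))}.
\]
The only remaining point for the spectral formula is to remove the closure bar.

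For this I use Lemma~\ref{lem:closure-continuity}: $A$ extends continuously to $[0,1]$. Eigenvalues of matrices depend continuously on the entries as compact multisets, and $[0,1]$ is compact. Hence $\bigcup_{0\le y\le1}\sigma(A(y))$ is compact. Every eigenvalue of $A(e)$, $e\in\{0,1\}$, is a limit of eigenvalues of $A(y)$ as $y\to e$, so it lies in the closure of the interior union. Conversely, any limit point of interior eigenvalues $\lambda_k\in\sigma(A(y_k))$ can be handled by passing to a subsequence with $y_k\to y_*\in[0,1]$, and continuity puts the limit in $\sigma(A(y_*))$. Thus the closure equals $\bigcup_{0\le y\le1}\sigma(A(y))$.

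The Fredholm statements follow directly. By the spectral equalities at $\lambda=0$, Fredholmness is equivalent to invertibility, which is equivalent to $0\notin\sigma(A(y))$ for every $y\in[0,1]$, that is, $\det A(y)\ne0$ on $[0,1]$. The inverse of $T_A$ is described by Theorem~\ref{thm:full-commutant-spectrum} with $\lambda=0$: it is $T_{A^{-1}}$, with $A^{-1}$ the pointwise inverse on $(0,1)$, which extends continuously to $[0,1]$.

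To show that $T_{A^{-1}}$ lies in the regular closure, I would invoke Corollary~\ref{inverseclosed}, which states that $\overline{\Rn}$ is inverse-closed in $\mathcal B(A_\beta)$. Its proof only uses Theorem~\ref{thm:finite-spectrum} and continuity of inversion, and Theorem~\ref{thm:bilipschitz-full} transports it to bi-Lipschitz branches. The only mild obstacle is checking that the regular fiber-matrix spectrum theorem, and hence inverse-closedness of $\mathscr R_{\beta,u}$, is indeed among the results transported by Theorem~\ref{thm:bilipschitz-full}. That theorem explicitly lists it, so $T_A^{-1}\in\overline{\mathscr R_{\beta,u}}$, and by uniqueness of the inverse this operator equals $T_{A^{-1}}$.
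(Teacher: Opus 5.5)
Your proposal is correct and follows the paper's own argument. You place $T_A$ in the full commutant via Corollary~\ref{cor:class-full}, take the spectral equalities from Corollary~\ref{cor:full-branch-essential}, remove the closure bar using continuity of $A$ and of finite-matrix spectra on the compact interval $[0,1]$, and obtain the inverse from Theorem~\ref{thm:full-commutant-spectrum} together with Corollary~\ref{inverseclosed}. Your subsequence argument, and your check that Theorem~\ref{thm:bilipschitz-full} transports inverse-closedness of the regular algebra, simply make explicit what the paper leaves implicit.
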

\begin{proof}
By Corollary~\ref{cor:class-full}, $T_A$ is in the full commutant, so Corollary~\ref{cor:full-branch-essential} applies. Continuity of $A$ on compact $[0,1]$ and continuity of finite-matrix spectra show that the union over $[0,1]$ is closed and equals the closure of the union over $(0,1)$. Inverse-closedness of the regular closure follows from Corollary~\ref{inverseclosed}; the inverse is represented by $A^{-1}$ by Theorem~\ref{thm:full-commutant-spectrum}.
\end{proof}

\noindent\label{rem:finite-scope}
Theorem~\ref{thm:exact-finite-fiber} holds for every real $u\in A_\beta$ with finite level sets, even when no inverse branch is Lipschitz. It gives a complete pointwise representation of all bounded commuting operators, but does not by itself yield the finite-dimensional endpoint conditions (C1)--(C3) or a fiber-matrix spectral formula for an arbitrary oscillatory finite-to-one map. Theorems~\ref{thm:full-branch-commutant} and \ref{thm:full-commutant-spectrum} supply those stronger results under the explicitly stated finite full-branch hypotheses. In particular, Theorem~\ref{cor:full-symmetric} settles the entire commutant of the original symmetric multiplier. Khani Robati and Seddighi~\cite{KhaniRobatiSeddighi2000} already studied operator-matrix commutants of direct sums and multipliers of Lipschitz algebras. The weighted endpoint conditions, norm-closure criteria, and spectral conclusions are established here under the stated restricted-variation hypotheses.

\medskip
\noindent\textbf{Authors' contact information}

\smallskip
\noindent H\"ulya Duru\\
Department of Mathematics, Faculty of Science, Istanbul University,\\
Vezneciler-Istanbul, 34134, Turkey\\
\texttt{hduru@istanbul.edu.tr}

\smallskip
\noindent Serkan \.{I}lter\\
Department of Mathematics, Faculty of Science, Istanbul University,\\
Vezneciler-Istanbul, 34134, Turkey\\
\texttt{ilters@istanbul.edu.tr}

\smallskip
\noindent Arkady Kitover\\
Community College of Philadelphia, 1700 Spring Garden St.,\\
Philadelphia, PA 19130, USA\\
\texttt{akitover@ccp.edu}
\end{document}